\newtheorem{theorem}{Theorem}[section]
\newtheorem{lemma}[theorem]{Lemma}
\newtheorem{proposition}[theorem]{Proposition}
\newtheorem{corollary}[theorem]{Corollary}
\newtheorem{algorithm}[theorem]{Algorithm}
\newtheorem{problem}[theorem]{Problem}
\newtheorem*{maintheorem*}{Main Theorem}
\newtheorem*{corollary*}{Corollary}
\newtheorem*{lemma*}{Lemma}
\newtheorem*{keylemma*}{Key Lemma}
\theoremstyle{definition}
\newtheorem{definition}[theorem]{Definition}
\newtheorem{example}[theorem]{Example}
\theoremstyle{remark}
\newtheorem{remark}[theorem]{Remark}
\numberwithin{equation}{section}
\numberwithin{table}{section}
\newcommand{\C}{\mathbb{C}}
\DeclareMathOperator{\Jac}{Jac}
\newcommand{\ee}[2]{{}e_{#2}^{#1}}
\newcommand{\aef}[1]{{\color{blue} \sf AEF: [#1]}}
\begin{document}

\title{The five-color hypercube Adinkra and the Jacobian of a generalized Fermat curve}
\author{Amanda E. Francis and Ursula A. Whitcher}

\maketitle

\begin{abstract}
Adinkras are highly structured graphs developed to study 1-dimensional supersymmetry algebras. A cyclic ordering of the edge colors of an Adinkra, or rainbow, determines a Riemann surface and a height function on the vertices of the Adinkra determines a divisor on this surface. We study the induced map from height functions to divisors on the Jacobian of the Riemann surface. In the first nontrivial case, a 5-dimensional hypercube corresponding to a Jacobian given by a product of 5 elliptic curves each with $j$-invariant 2048, we develop and characterize a purely combinatorial algorithm to compute height function images. We show that when restricted to a single elliptic curve, every height function is a multiple of a specified generating divisor, and raising and lowering vertices corresponds to adding or subtracting this generator. We also give strict bounds on the coefficients of this generator that appear in the collection of all divisors of height functions.
\end{abstract}

\tableofcontents

\section{Introduction}\label{S:intro}

Adinkras are highly structured graphs whose combinatorial structure encapsulates the data of 1-dimensional supersymmetry algebras. Initially described by the physicists M. Faux and S.J. Gates, Jr. (see \cite{FG}), Adinkras have also drawn attention for their intriguing combinatorial properties and their relationship to arithmetic geometry. In the present work, we examine the connection between the \emph{height function} of an Adinkra and a divisor on the Jacobian of an associated Riemann surface. We may view the construction as an algebraic thread that aids us in navigating a combinatorial labyrinth of potential height functions. Alternatively, we may see ourselves as number-theoretic adventurers, using the combinatorial data to examine the arithmetic properties of an intriguing high-genus curve.

Let us describe our motivation and results in more detail. Recall that an Adinkra consists of an $N$-regular bipartite graph equipped with an $N$-coloring of its edges and satisfying certain properties, called a \emph{chromotopology}, together with two additional structures, the aforementioned height function and an \emph{odd dashing}. (For a more detailed discussion, see \S\ref{SS:AdinkraDef}.) The authors of \cite{codes} obtained a fundamental classification result, showing that every $N$-colored Adinkra chromotopology can be obtained as a quotient of the hypercube $H^N$ by a \emph{doubly even code}, where $H^N$ denotes the hypercube with $2^N$ vertices in $\{0,1\}^N$ and vertices $v$ and $v'$ are adjacent if $v$ and $v'$ differ in exactly one coordinate. Thus, understanding the possible height functions and odd dashings for  hypercube Adinkras is of key importance for mapping Adinkra geography.

The number of possible height functions for hypercube Adinkras grows rapidly. Though for $N=1$ there are only two possible heights for $H^1$, depending on whether a black or white vertex is placed on top, for $N=2,3,4,5$ there are $6, 38, 990$, and $395094$ possible height functions, respectively, as computed in \cite{Zhang}. Finding methods to characterize or classify height functions is thus of great interest. We count the $N=6$ hypercube Adinkra height functions in Section~\ref{SS:CountingHeights}, using an observation of Steven Charlton, before turning our attention back to the structure of $N=5$ hypercube height functions. The enumeration and structure of heights for quotient Adinkras is still to a great extent an open problem. 

The authors of \cite{geom1} constructed a Riemann surface associated to a \emph{rainbow}, or cyclic order on edge colors. In the subsequent work \cite{geom2}, they described a map from height functions to divisors on the Jacobian of the Riemann surface. In the case of hypercube Adinkras $H^N$, for $N=1, 2$ and $3$, the group of divisors on the appropriate Jacobian is trivial; for $N=4$, though the group is nontrivial, the image of the map is the identity element. For $N=5$, the authors of \cite{geom2} showed that the image of the map from height functions to Jacobian divisors contains at least one non-identity divisor. As there are $395094$ distinct height functions, understanding the map in more detail is of pressing concern.

We characterize the map from height functions on the $N=5$ hypercube Adinkra $H^5$ to Jacobian divisors in three ways. Geometrically, we give an explicit map from the Riemann surface $X_\mathrm{alg}^5$ to a product of elliptic curves $E_1 \times \cdots \times E_5$, which form an isogeneous decomposition of ${\rm Jac}(X_{\rm alg}^5)$. The elliptic curves have $j$-invariant 2048, and thus are isogenous over $\mathbb{C}$. Working arithmetically, for each curve $E_k$ we identify a number field $K_k$ where the height function image is defined (see Proposition~\ref{prop:field_choice}) and characterize the subgroup of Mordell-Weil generated by the image. 

We show that the height function map can be computed via a purely combinatorial algorithm. This algorithm uses the notion of \emph{$j$-color splittings} and  \emph{total-color-splittings}, which use the rainbow to divide each of the sets of black and white vertices into subsets labeled $+$ and $-$ (see Definitions \ref{def:jcs} and \ref{def:cs}). We first demonstrate that the images of vertices and face centers of $H^5$ in the Mordell-Weil group $MW(E_k)$ over $K_k$ lie in a group isomorphic to $\mathbb{Z} \times \mathbb{Z}/(4) \times \mathbb{Z}/(2)$; we give a complete description of the generators of this group and the images of the points.

Using the relationship between height functions and divisors, we obtain our Main Theorem: 

\begin{maintheorem*}\label{thm:main}
Let $\nu \colon X_{\rm alg}^5 \to E_1 \times \cdots \times E_5$  be the map that takes a height function on $H^5$ to the product of the elliptic curves $E_k$ (the isogeneous decomposition of the Jacobian of $X_{\rm alg}^5$), and let $\nu_k(h)$ be the projection of $\nu$ to $E_k$. Let $h_1$ be the height function on $H^5$ obtained by lowering the top vertex of the fully extended height function, and let $h$ be any height function on $H^5$. Then 
$\nu_k(h)  = a \nu_k(h_1)$ for some integer $a$, where $-8 \leq a \leq 8$.
Furthermore, if $h$ and $h'$ are two height functions on $H^5$ that differ by lowering (or raising) a single vertex, we may write $\nu_k(h) = a \nu_k(h_1)$ and $\nu_k(h') = a'\nu_k(h_1)$ with $|a-a'|=1$.
\end{maintheorem*}

Note that lowering a vertex in a particular height may correspond to adding the generator in one elliptic curve but subtracting the generator in a different elliptic curve, so the choice of $a$ in the above theorem depends on both $k$ and $h$. The details of the sign choice corresponding to lowering a vertex are determined by the total-color-splitting associated to the rainbow. 

The plan of the paper is as follows. We review the combinatorics of Adinkras and define color splitting in \S~\ref{S:Combinatorics}. In \S~\ref{S:Geometrization}, we examine the Riemann surface associated to the $N=5$ hypercube Adinkra $H^5$ together with its Jacobian and characterize the images of the Adinkra's vertices and face centers in the elliptic curve components of the Jacobian. We summarize this description in Theorem~\ref{thm:imagesinEC}. We conclude this section by discussing appropriate fields of definition. In \S~\ref{S:Heights_Divisors_5}, we use Theorem~\ref{thm:imagesinEC} to explore the relationship between height functions and the associated divisors and prove our \hyperref[thm:main]{Main Theorem}.

Code and data associated with this paper may be found in the Zenodo repository \cite{ourcode}.

\section{Combinatorics}\label{S:Combinatorics}

\subsection{Adinkras and chromotopologies}\label{SS:AdinkraDef}

An {\it Adinkra topology} is an $N$-regular bipartite graph $A$. We call the two vertex sets in the bipartition the black and white vertices, respectively. 
 A {\it chromotopology} is an Adinkra topology $A$ equipped with an $n$-coloring of the edges of $A$, such that for any two distinct colors $j$ and $k$, $\{e \in E(A) \mid {\rm color}(e) \in \{j,k\}\}$ is comprised of disjoint  4-cycles. As we note in \S~\ref{S:intro}, the authors of \cite{codes} showed that any chromotopology can be obtained as a quotient of a hypercube by a doubly even code.
 
 A {\it height function} on a chromotopology $A$ is a map $h: V(A) \to \mathbb Z_{\geq 0}$, such that $|h(v_1) - h(v_2)| = 1 $ if $(v_1,v_2) \in E(A)$. We shall say that two heights $h$ and $h'$ are the same if they differ by a constant (i.e., there exists $a \in \mathbb{Z}$ such that $h(v) = h'(v)+a$ for all $v \in V(A)$). Our convention will be to use the height representative that satisfies $\min_{v \in V(A)} h(v) = 0$. 
A {\it ranked chromotopology} is a chromotopology $A$ equipped with such a {\it height function}. 

A  {\it dashing function} is a map $d: E(A) \to \mathbb Z/(2)$, such that $\sum_{e \in C} d(e) = 1 \in \mathbb Z/(2)$ for any 4-cycle in $A$. That is, under $d$, some edges of $A$ are dashed and some are not, and there must be an odd number of dashings in any 4-cycle in the graph.
A {\it dashed} chromotopology is a chromotopology $A$ equipped with such a dashing function.  

 An {\it Adinkra} is a dashed, ranked chromotopology. In Figure \ref{fig:small_adinkra} an $N=3$ Adinkra is shown.  
 This is an example of a {\it fully extended} Adinkra; that is, $\max\{|h(v) - h(v')| : v,v' \in V(G)\}$ is maximal among all height functions for the given chromotopology. A fully extended $N=5$ Adinkra is shown on the right in Figure \ref{fig:basic_heights}. The image on the left is called a {\it valise} Adinkra for the chromotopology; that is, ${\max\{|h(v) - h(v')| : v,v' \in V(G)\}}$ is minimal among all height functions for the given chromotopology.

\begin{figure}[ht!]
\begin{tikzpicture}
    \draw[color = blue, line width = .8pt, dashed] (0,3) -- (1,2);
    \draw[color = blue, line width = .8pt] (0,1) -- (-1,2);
    \draw[color = blue, line width = .8pt] (1,1) -- (0,2);
    \draw[color = blue, line width = .8pt] (0,0) -- (-1,1);
    \draw[color = Green, line width = .8pt, dashed] (0,3) -- (-1,2);
    \draw[color = Green, line width = .8pt, dashed] (0,2) -- (-1,1);
    \draw[color = Green, line width = .8pt,dashed] (1,2) -- (0,1);
    \draw[color = Green, line width = .8pt] (1,1) -- (0,0);
    \draw[color = orange, line width = .9pt, dashed] (-1,1) -- (-1,2);
    \draw[color = orange, line width = .9pt] (0,3) -- (0,2);
    \draw[color = orange, line width = .9pt] (0,1) -- (0,0);
    \draw[color = orange, line width = .9pt] (1,2) -- (1,1);
    \draw[color = gray, line width = .3pt,dotted] (1.5,3)--(-1.5,3);
    \draw[color = gray, line width = .3pt,dotted] (1.5,2)--(-1.5,2);
    \draw[color = gray, line width = .3pt,dotted] (1.5,1)--(-1.5,1);
    \draw[color = gray, line width = .3pt,dotted] (1.5,0)--(-1.5,0);
    
    \draw [fill=white] (0,3) circle (2pt);
    \draw [fill=black] (1,2) circle (2pt);
    \draw [fill=black] (0,2) circle (2pt);
    \draw [fill=black] (-1,2) circle (2pt);
    \draw [fill=white] (-1,1) circle (2pt);
    \draw [fill=white] (0,1) circle (2pt);
    \draw [fill=white] (1,1) circle (2pt);
    \draw [fill=black] (0,0) circle (2pt);
    \node[color = gray] at (2,3) {$h=3$};
    \node[color = gray] at (2,2) {$h=2$};
    \node[color = gray] at (2,1) {$h=1$};
    \node[color = gray] at (2,0) {$h=0$};
\end{tikzpicture}
\caption{An $N=3$ Adinkra $A$. The dashing function $d$ on $A$ satisfies $d(e)=1$ if and only if the edge is dashed in the visualization above. The height function values correspond to the $y$-coordinates of the vertices.}\label{fig:small_adinkra}
\end{figure}
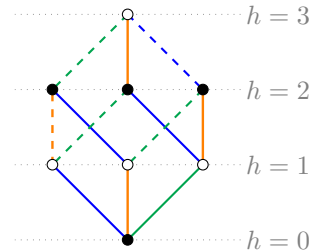

\begin{remark}\label{rem:pegs}
An equivalent way to define an Adinkral height, which requires less data in some sense, is the following. It is natural to think of the Adinkras as `hanging gardens' in which certain vertices (which we will call `pinned') are secured at certain heights, and all other vertices `fall' naturally according to their adjacencies with the pinned vertices.  For example, in the case of the fully extended Adinkras in Figures \ref{fig:small_adinkra} and \ref{fig:basic_heights}, only one vertex is a pinned vertex, secured at the topmost height, and all other vertices' heights are determined by their distance from that vertex. In the valise Adinkra, all 16 white vertices are pinned, secured at the same height, and thus all black vertices fall one step lower. 
\end{remark}

\begin{figure}[H]
\begin{tikzpicture}
    \node[anchor=south west,inner sep=0] at (-4,2) {\includegraphics[width=.6\textwidth]{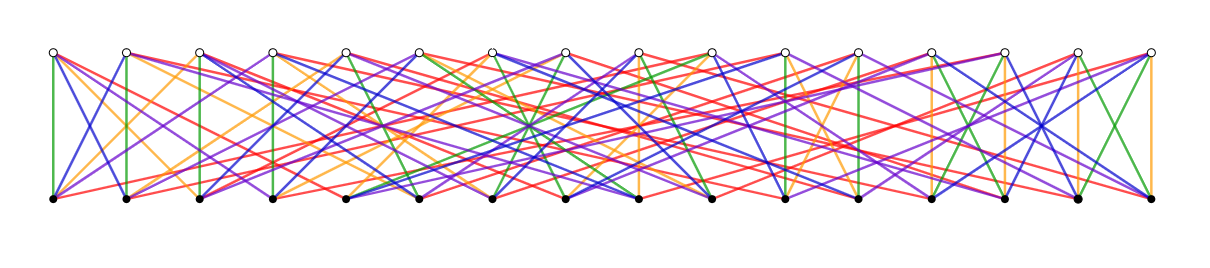}};
     \node[anchor=south west,inner sep=0] at (7,0) {\includegraphics[width=.3\textwidth]{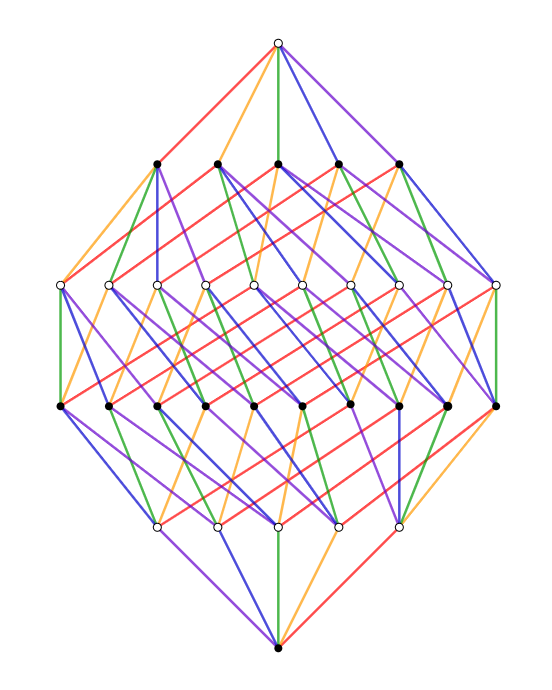}};

    \node at (1,-1) {$N=5$ Valise height};
    \node at (10,-1) {$N=5$ Fully extended height};

\end{tikzpicture}   

    \caption{Two extremal examples of heights on $H^5$, the valise height and the fully extended height.}
    \label{fig:basic_heights}
\end{figure}

\subsection{Counting and classifying Adinkra heights}\label{SS:CountingHeights}

As mentioned in the introduction, we are interested in characterizing the set of images of heights in the Jacobian of the algebraization of the Adinkra. First, we consider characterizing the heights themselves. In his thesis, Zhang \cite{Z13} presented an algorithm for counting heights on $H^5$ inductively. The basic idea of this algorithm is as follows. 
    \begin{itemize}
    \item Beginning with the $N = 1$ hypercube $H^1$,  list all possible heights. 
    \item Construct heights on $H^{N+1}$  by taking all possible pairs of heights on $H^N$, shifting one of them up or down one unit, and glueing them together.
    \item Check to see if the new height has already been counted.
    \end{itemize}
This is a computationally costly algorithm. For $N=1$ there are two heights (either the black or the white vertex on top). For $N=2,3,4,5$ there are $6,38,990,395094$ heights, respectively. 

An alternative way to obtain these numbers, as shown in \cite{Z13},  comes from the relationship between  heights on the $N$-hypercube $H^N$ and  3-colorings of  $H^N$.
\begin{proposition}\cite{Z13}
    The number of 3-colorings of $H^N$ is three times the number of distinct height functions on $H^N$. 
\end{proposition}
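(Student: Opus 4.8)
The plan is to establish a bijection between the height functions on $H^N$ and a distinguished class of $3$-colorings, and then to show that the remaining $3$-colorings are obtained by a free action of the cyclic group $\mathbb{Z}/(3)$ that permutes the three colors. Here by a \emph{$3$-coloring} of $H^N$ I mean a proper vertex $3$-coloring, i.e.\ an assignment of one of three colors to each vertex so that adjacent vertices receive distinct colors; the statement to be proved is that the total number of such colorings equals $3$ times the number of height functions.

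First I would set up the correspondence. Fix a base vertex $v_0$ (say the all-zeros vertex) and recall that, since $H^N$ is bipartite and connected, a height function $h$ is determined by the residues $h(v) \bmod 3$ together with the combinatorial constraint $|h(v)-h(w)|=1$ on edges; more precisely, given a height function $h$, the map $v \mapsto h(v) \bmod 3$ is a proper $3$-coloring, because adjacent vertices differ by exactly $1$. Conversely, I would argue that a proper $3$-coloring $c$ of $H^N$ with $c(v_0)$ prescribed ``unrolls'' to a unique height function normalized by $\min_v h(v) = 0$: along any edge $vw$ the two colors $c(v), c(w)$ are distinct elements of $\mathbb{Z}/(3)$, and exactly one of the two lifts $h(w) = h(v)\pm 1$ is compatible with $c(w) \equiv c(v)\pm 1$, so propagating outward from $v_0$ builds a well-defined integer-valued function on the tree of shortest paths; the content is that this function is consistent around every $2$-colored $4$-cycle (and hence around all cycles, since $H^N$'s cycle space is generated by $4$-cycles), which follows because going around a $4$-cycle the signs $\pm 1$ forced by the coloring must sum to $0$ in $\mathbb{Z}$ exactly when they sum to $0$ in $\mathbb{Z}/(3)$, and a sum of four terms from $\{+1,-1\}$ that vanishes mod $3$ must actually vanish. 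Then shifting so the minimum is $0$ gives a genuine height function, and these two maps are mutually inverse once we fix the value at $v_0$; but a height function does \emph{not} come with a marked value at $v_0$, so I must be careful about exactly which normalization of colorings corresponds to which normalization of heights.

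The cleanest way to package this is to quotient out the ambiguity symmetrically. The group $\mathbb{Z}/(3)$ acts on the set of proper $3$-colorings by $c \mapsto c + t$ (adding a constant $t \in \mathbb{Z}/(3)$ to every vertex color); this action is free because $H^N$ has at least one edge, so no coloring is fixed by a nontrivial translation. The orbits of this action are therefore all of size $3$, so the number of $3$-colorings is $3$ times the number of orbits. It then suffices to produce a bijection between orbits and height functions. Given an orbit, pick the representative $c$ with $c(v_0) = 0$ and apply the unrolling above to get a height function; given a height function $h$, form the coloring $v \mapsto h(v) + (h(v_0) \bmod 3) - (h(v_0)\bmod 3)$ — more simply $v \mapsto h(v) \bmod 3$ — and take its orbit. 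I would check these are well-defined and inverse: the orbit of $v\mapsto h(v)\bmod 3$ does not depend on any choice, and unrolling the $c(v_0)=0$ representative recovers $h$ because both $h$ and the unrolled function are integer lifts of the same mod-$3$ coloring that agree at $v_0$ up to a constant and have minimum $0$, hence are equal.

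The main obstacle is the well-definedness of the unrolling map — that is, proving that the locally-forced integer increments $\pm 1$ glue to a globally consistent function. This is where the bipartite-plus-$4$-cycle structure of $H^N$ (equivalently, that $H^N$ is a chromotopology in the sense of \S\ref{SS:AdinkraDef}, its $2$-colored subgraphs being disjoint $4$-cycles) is essential: one needs that around each generating $4$-cycle the four signs determined by the coloring sum to zero over $\mathbb{Z}$, not merely over $\mathbb{Z}/(3)$. I would isolate this as the key lemma and prove it by the elementary observation that four elements of $\{\pm 1\}$ summing to $0 \bmod 3$ must sum to $0$ in $\mathbb{Z}$ (the possible integer sums are $-4,-2,0,2,4$, and only $0$ is divisible by $3$), combined with the fact that a proper $3$-coloring restricted to a $4$-cycle assigns increments whose mod-$3$ sum vanishes. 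Everything else — freeness of the $\mathbb{Z}/(3)$-action, the bijection between orbits and heights — is then bookkeeping.
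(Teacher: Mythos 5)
Your argument is correct, and it is organized differently from the paper's. The paper sends a single height $h$ to all $6=|S_3|$ proper $3$-colorings obtained by assigning the three colors to the residues of $h$ along a cyclic order, observes that the dual height $h^*(v)=m-h(v)$ with the reversed order produces the same six colorings, and concludes that the assignment is $2$-to-$1$, so each height accounts for $6/2=3$ colorings; the converse (that \emph{every} proper $3$-coloring arises this way) is left implicit. You instead act only by the cyclic subgroup $\mathbb{Z}/(3)$ of color translations, note the action is free so orbits have size $3$, and build an explicit bijection between orbits and heights. The two are compatible: the three even permutations of colors give your orbit of $h \bmod 3$, while the three odd permutations give the orbit of $h^* \bmod 3$, so the paper's duality identification is absorbed in your setup by the fact that $h$ and $h^*$ are simply different heights with different orbits. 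What your route buys is a complete proof of the surjectivity the paper glosses over, via the unrolling lemma: a proper $3$-coloring forces a $\pm1$ increment on each edge, and four increments from $\{\pm1\}$ around a generating $4$-cycle that sum to $0$ in $\mathbb{Z}/(3)$ must sum to $0$ in $\mathbb{Z}$, so the increments integrate to a genuine height function. Your injectivity step could be stated slightly more sharply — the unrolled function and $h$ have \emph{identical} integer increments on every edge (both lie in $\{\pm1\}$ and agree mod $3$), hence differ by a constant and coincide after normalizing the minimum to $0$ — but the idea is already present in what you wrote. (Two trivia: freeness of the translation action needs only one vertex, not one edge; and the paper's heights are indeed implicitly normalized, as your count requires.)
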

 In OEIS \cite{S}, the sequence of the number of 3-colorings of $H^N$ has been computed up to $N=6$.
 \begin{table}[H]
     \centering
        \begin{tabular}{c|l|l}
            $N$ & 3-colorings of $H^N$ &  heights of $H^5$\\\hline
            1  & 6 & 2 \\
            2 & 18 & 6\\
            3 & 114 & 38\\
            4 & 2970 & 990\\
            5 &1,185,282 & 395,094\\
            6 & 100,301,050,602 & 33,433,683,534\\
        \end{tabular}
    \caption{The number of 3-colorings and heights of $H^N$ for $N\leq 6$}
    \label{tab:3colorings}
 \end{table}
 
Some further work along these lines appeared in \cite{Gal}, where it was shown that for large $N$, the number of heights on $H^5$ is $O(2^{2^{N-1}})$ and the proportion of these heights with total spread equal to $3, 4, 5$ is roughly $\frac{1}{e}, 2\frac{\sqrt{e} - 1}{e}, 1-\frac{2\sqrt{e} - 1}{e}$, respectively. These results are relevant even for small  $N$ as seen in  Table \ref{tab:3colorings}.

Heights on $H^5$ can be grouped into equivalence classes using restricted graph isomorphisms in the following way. 
\begin{definition}\label{def:hequiv}
    We say that two heights $h$ and $h'$ on an Adinkra $A$ are {\it combinatorially equivalent}, denoted $h\, {\sim}_{\rm c}\, h'$ or $h \in [h']_{\rm c}$, if there exists a graph isomorphism $g\colon H^N \to H^N$ such that $h(v) = h'(g(v))$ for all $v \in V(A)$. 
\end{definition} 
Two interesting and, in some sense, extremal heights on a hypercube Adinkra are the fully extended height and the valise height, as seen in Figure \ref{fig:basic_heights}. It is straightforward to see that there are $2^N$ heights in the fully extended height equivalence class on $H^N$, and 2 heights in that of the valise.  Describing all of the equivalence classes of heights on $H^N$ under $\sim_{\rm c}$ is difficult and, to our knowledge, open for $n\geq 5$. Such a description would be very helpful for studying Adinkras.

\begin{remark}\label{rem:Gamma}
    The height functions on the $N$-hypercube $H^N$ can be placed in a directed graph $\Gamma_N$ in the following way. Given height functions $h$ and $h'$,  we say that $h \rightarrow h'$ if for some vertex $v \in V(H^N)$, 
 $h'|_{V(H^N)\setminus \{v\}}  = h$ and $h'(v) = h(v)-2$ (in this case $v$ is a local max in $h$ and a local min in $h'$). In other words, adjacency in $\Gamma_N$ is given by vertex lowering.
\end{remark}
Note also that the ``reduced'' digraph $\widetilde \Gamma_N$ of height equivalence classes $[h]_{\rm c}$ (where, once again, adjacency is given by vertex-lowering) is well-defined. 
The details of the number and digraph structure of these equivalence classes is a nontrivial problem. In future work, we describe $\widetilde\Gamma_N$, for $N = 1, 2, 3, 4$, and discuss $\widetilde \Gamma_5$.

\subsection{Discrete Morse functions and Morse divisors}\label{S:discreteMorse}
In Section \ref{S:Geometrization} we will see how to embed an Adinkra $A$ into an certain algebraic curve $X_{\rm alg}^N$ in $\mathbb{P}^N$. We are interested in how information about Morse divisors on $X_{\rm alg}^N$ and the Jacobian of $X_{\rm alg}^N$, associated with height functions on $A$, can help us understand Adinkras, and by extension, 1-dimensional supersymmetry algebras. 
In fact, finding the image of a height as a divisor of  ${\rm Jac}(X_{\rm alg})^N$, is a purely combinatorial process, which we will explain now. To do so, we will need to use the notion of {\it discrete Morse functions}, following \cite{Banchoff, geom2}. 

We refer to an oriented homogeneous simplicial 2-complex as a \emph{triangular mesh}. A discrete Morse function $f$ on a triangular mesh $M$ is a real-valued function defined on the vertices of $M$, such that adjacent vertices are mapped to distinct values. We convert a hypercube Adinkra $H^N$ equipped with a rainbow and a choice of one vertex as a \emph{base point} into a triangular mesh $\widetilde H^N$ in the following way. 

We begin by constructing an oriented 2-dimensional cell complex. Consider the topological space consisting of the vertices and edges of the Adinkra together with a collection of 2-cells formed in the following way: we add one 2-cell for each 4-cycle in the Adinkra formed by consecutive colors $(j, j+1)$ in the rainbow. We orient each of these 2-cells by following the rule that if you start at a vertex of the boundary 4-cycle that is an even distance from the base point, you should traverse the edges of the 4-cycle by starting with the edge of color $j$, while if you start at a vertex of the 4-cycle that is an odd distance from the base point, you should start with the edge of color $j+1$. (In the case where there are only 2 edge colors, we pick one color as the starting color for even distance and the other color as the starting color for odd distance.)

To convert our cell complex to a triangular mesh, we must triangulate the 2-cells.  To do so, we use the height function on the Adinkra. Each of the $(j, j+1)$ 4-cycles $C$ in the Adinkra is either a {\it diamond} (if $|\{h(v) \mid v \in C\}|=3$) or a bow tie (if $|\{h(v) \mid v \in C\}|=2$)).

\begin{figure}[H]
\begin{tabular}{cc}
\includegraphics[scale=.3]{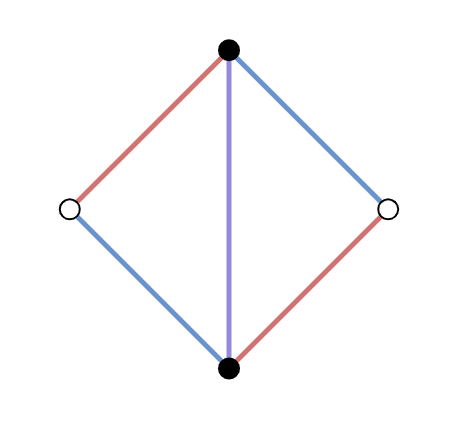} & 
\includegraphics[scale=.3]{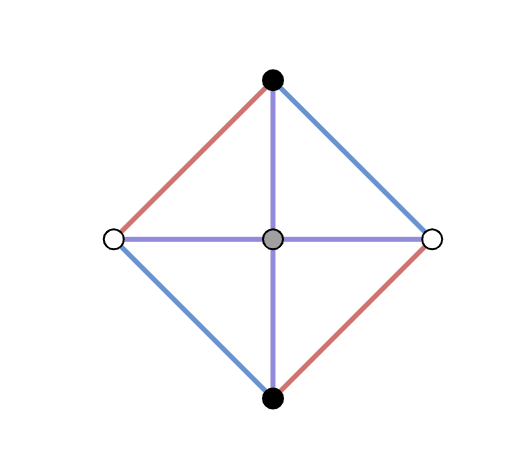}
\end{tabular}
\caption{A triangulated diamond 4-cycle and bow-tie 4-cycle}\label{fig:triangulate}
\end{figure}

In the case of a diamond, we triangulate by adding an edge that connects the maximum vertex and minimum vertex in the cycle. For each bow-tie cycle, we add a new vertex    adjacent to all 4 vertices in the cycle, with height equal to the average of the heights of these vertices. (See Figure \ref{fig:triangulate}). In such a case,  let $f_{j,j+1}(v)$ denote the face center   in the $(j,j+1)$-colored 4-cycle adjacent to $v$. The 2-simplices resulting from our triangulation inherit an orientation from the orientation on the initial cell complex.

\begin{definition}\label{def:dMD}
    Given a discrete Morse function $f$, a {\it discrete Morse divisor} $D$ is a formal sum $D = \sum_v \kappa_v v$ of the vertices $v$ in the triangular mesh, with coefficients $\kappa_v$ determined in the following way. For each $v$, we consider the set of all vertices $u_j$ on the boundary of $\mathrm{Star}(v)$, with the cyclic ordering induced by the orientation on the triangular mesh. We start at any such $u_j$ and travel along the edges to $u_{j+1}$ (following the order dictated by the orientation of our mesh), counting the number $\lambda_v$ of times that the value $f(u_j)-f(u_{j+1})$ changes sign in a single traversal of the boundary of $\mathrm{Star}(v)$. (Note that $\lambda_v$ is always even.) For $v$ a local minimum or maximum of $f$, $\lambda_v = 0$ and $\kappa_v = -1$. If $\ell_v = 2$, $v$ is called a regular point and $\kappa_v = 0$. If $\lambda_v = 2 + 2\mu_v$ for $\mu_v>0$, then $v$ is called a saddle point and $\kappa_v = \mu_v$.
\end{definition}

Note that choosing the opposite orientation for our triangular mesh (as might happen, for example, if we picked a fermion instead of a boson as our base point) does not change the discrete Morse divisor associated to a height function, because Definition~\ref{def:dMD} only counts changes in sign.

Below, we will use the notion of a discrete Morse function to compute discrete Morse divisors for four distinct heights on $H^5$. Three of these heights ($h_{\rm v}$, $h_{\rm fe}$, and $h_{1}$) were partially treated in \cite{geom2}. We give complete computations in these cases, and present a new example as well. 

\begin{example}\label{ex:divisors}
\noindent {\bf (a)} Let $h_{\rm v}$ be the valise height on $H^5$, as shown on the left in Figure \ref{fig:basic_heights}. For this height every vertex is a local max or min, and every face center $f_{j,j+1}(v)$ is a bow-tie, and thus, a saddle point with multiplicity $\mu_v = 1$. The associated discrete Morse divisor is 

\[
D_{\rm v} = \sum_{f \text{ face center}} f - \sum_{v \text{ vertex}} v,
\]

{\bf (b)} Let $h_{\rm fe}$ be the fully extended height on $H^5$, as shown on the right in Figure \ref{fig:basic_heights}.  Note that in this case  all face centers of the adinkra are diamonds, thus no face centers appear in $D_{\rm fe}$. Without loss of generality, we may assume that $(1,1,1,1,1)$ is the single pinned vertex of $H^5$. Then, the height of any black or white vertex can be taken to be the sum of the entries of its representation in   $H^5$. It is clear that we have only one maximum, $(1,1,1,1,1)$, and one minimum, $(0,0,0,0,0)$. To find saddle points,  consider the diagrams in Figure \ref{fig:saddle points}; these demonstrate the two ways (modulo rotating by $2\pi/5$) in which a saddle point can occur; in both cases the saddle point is the central vertex, which has height $h_0$. 

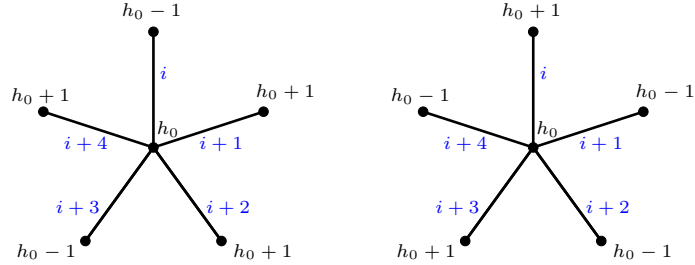
\begin{figure}[H]
\begin{center}
    \begin{tikzpicture}[line cap=round,line join=round,>=triangle 45,x=1cm,y=1cm,scale = 1.5]
\draw [line width=1pt] (2.5,-0.46115823141237366)-- (2.5,0.5647362809793773);
\draw [line width=1pt] (1.8969943352083505,-1.2911243263732994)-- (2.5,-0.46115823141237366);
\draw [line width=1pt] (2.5,-0.46115823141237366)-- (1.524316338958386,-0.14413939264732145);
\draw [line width=1pt] (3.1030056647916484,-1.2911243263733005)-- (2.5,-0.46115823141237366);
\draw [line width=1pt] (2.5,-0.46115823141237366)-- (1.8969943352083505,-1.2911243263732994);
\draw [line width=1pt] (3.475683661041614,-0.1441393926473227)-- (2.5,-0.46115823141237366);
\draw [line width=1pt] (2.5,-0.46115823141237366)-- (3.1030056647916484,-1.2911243263733005);
\begin{scriptsize}

\draw [fill=black] (2.5,-0.46115823141237366) circle (1.2pt);
\draw[color=black] (2.63,-0.3) node {$h_0$};
\draw [fill=black] (2.5,0.5647362809793773) circle (1.2pt);
\draw[color=black] (2.483478086269523,0.7463646395605015) node {$h_0-1$};
\draw [fill=black] (3.475683661041614,-0.1441393926473227) circle (1.2pt);
\draw[color=black,anchor=south] (3.67,-0.1) node {$h_0+1$};
\draw [fill=black] (1.524316338958386,-0.14413939264732145) circle (1.2pt);
\draw[color=black,anchor = south] (1.5,-0.14) node {$h_0+1$};
\draw [fill=black] (1.8969943352083505,-1.2911243263732994) circle (1.2pt);
\draw[color=black,anchor = north] (1.55,-1.25) node {$h_0-1$};
\draw [fill=black] (3.1030056647916484,-1.2911243263733005) circle (1.2pt);
\draw[color=black,anchor = north] (3.47,-1.25) node {$h_0+1$};
\draw[color=blue,anchor = west] (2.49,.2) node {$i$};
\draw[color=blue,anchor = north] (3.1,-.3) node {$i+1$};
\draw[color=blue,anchor = west] (2.9,-1) node {$i+2$};
\draw[color=blue,anchor = east] (2.09,-1) node {$i+3$};
\draw[color=blue,anchor = north] (1.9,-.3) node {$i+4$};
\end{scriptsize}
\end{tikzpicture}
\qquad
    \begin{tikzpicture}[line cap=round,line join=round,>=triangle 45,x=1cm,y=1cm,scale = 1.5]
\draw [line width=1pt] (2.5,-0.46115823141237366)-- (2.5,0.5647362809793773);
\draw [line width=1pt] (1.8969943352083505,-1.2911243263732994)-- (2.5,-0.46115823141237366);
\draw [line width=1pt] (2.5,-0.46115823141237366)-- (1.524316338958386,-0.14413939264732145);
\draw [line width=1pt] (3.1030056647916484,-1.2911243263733005)-- (2.5,-0.46115823141237366);
\draw [line width=1pt] (2.5,-0.46115823141237366)-- (1.8969943352083505,-1.2911243263732994);
\draw [line width=1pt] (3.475683661041614,-0.1441393926473227)-- (2.5,-0.46115823141237366);
\draw [line width=1pt] (2.5,-0.46115823141237366)-- (3.1030056647916484,-1.2911243263733005);
\begin{scriptsize}

\draw [fill=black] (2.5,-0.46115823141237366) circle (1.2pt);
\draw[color=black] (2.63,-0.3) node {$h_0$};
\draw [fill=black] (2.5,0.5647362809793773) circle (1.2pt);
\draw[color=black] (2.483478086269523,0.7463646395605015) node {$h_0+1$};
\draw [fill=black] (3.475683661041614,-0.1441393926473227) circle (1.2pt);
\draw[color=black,anchor=south] (3.67,-0.1) node {$h_0-1$};
\draw [fill=black] (1.524316338958386,-0.14413939264732145) circle (1.2pt);
\draw[color=black,anchor = south] (1.5,-0.14) node {$h_0-1$};
\draw [fill=black] (1.8969943352083505,-1.2911243263732994) circle (1.2pt);
\draw[color=black,anchor = north] (1.55,-1.25) node {$h_0+1$};
\draw [fill=black] (3.1030056647916484,-1.2911243263733005) circle (1.2pt);
\draw[color=black,anchor = north] (3.47,-1.25) node {$h_0-1$};
\draw[color=blue,anchor = west] (2.49,.2) node {$i$};
\draw[color=blue,anchor = north] (3.1,-.3) node {$i+1$};
\draw[color=blue,anchor = west] (2.9,-1) node {$i+2$};
\draw[color=blue,anchor = east] (2.09,-1) node {$i+3$};
\draw[color=blue,anchor = north] (1.9,-.3) node {$i+4$};

\end{scriptsize}
\end{tikzpicture}
\end{center}
\caption{Possible saddle point configurations in $X_{\rm alg}^5$. Vertices are labeled by their heights, relative to the height $h_0$ of the central vertex. Edges are labeled by their colors.}\label{fig:saddle points}
\end{figure}

Recall that adjacency (in $H^5$) along an edge of color $i$ corresponds to adding $1\pmod{2}$  to coordinate $k$. Thus, saddle points must look like $(1,0,1,0,1)$ or $(0,1,0,1,0)$ (modulo cyclic permutations of the coordinates). Then, denoting by $D_{\rm fe}$ the discrete Morse divisor associated with the height $h_{\rm fe}$,
\[
D_{\rm fe} = \sum_{v \in \mathcal{V}} v - (1,1,1,1,1) - (0,0,0,0,0),
\]
where 
\[
\mathcal{V} = \left\{
\begin{array}{c}(1,0,1,0,1),(0,1,0,1,1),(1,0,1,1,0),(0,1,1,0,1),(1,1,0,1,0),\\(0,1,0,1,0), (1,0,1,0, 0), (0,1,0, 0,1), (1,0, 0,1,0), (0, 0,1,0,1)\end{array}\right\}.
\]

\noindent {\bf (c)} Consider instead the height $h_1$ obtained from $h_{\rm fe}$ by lowering the vertex $(1,1,1,1,1) \in H^5$. We now have 5 new local maxima, which form the set $\mathcal{V}'$:
\[
\mathcal{V}' = \{
(0,1,1,1,1),(1,0,1,1,1),(1,1,0,1,1),(1,1,1,0,1),(1,1,1,1,0)\},
\]
and 5 new bow-tie type saddle points, which form the set $\mathcal{V}''$:
\[
\mathcal{V}'' = \left\{
\left(\frac12,\frac12,1,1,1\right),\left(1,\frac12,\frac12,1,1\right),\left(1,1,\frac12,\frac12,1\right),\left(1,1,1,\frac12,\frac12\right),\left(\frac12,1,1,1,\frac12\right)\right\}.
\]
Then 
\[
D_{h_1} =\sum_{ v \in \mathcal{V}}v + \sum_{v \in \mathcal{V}''}v-\sum_{v\in \mathcal{V}'}v-(0,0,0,0,0)-(1,1,1,1,1).
\]

When no confusion will arise, we will also refer to this divisor as $D_1$.

\noindent {\bf (d)} Finally,  consider the height $h_2$ obtained from $h_{1}$ by lowering the vertex $(0,1,1,1,1) \in H^5$. For this height,  $(1,1,1,1,1)$ is no longer a min or max; $(0,1,0,1,1)$ and $(0,1,1,0,1)$ must be removed from $\mathcal{V}$, and $\left(\frac 12, \frac 12, 1,1,1 \right)$ and $\left(\frac 12, 1, 1,1,\frac 12\right)$ must be removed from $\mathcal{V}''$. Three additional saddle points $\left(0,\frac12, \frac12,1,1\right)$, $\left(0,1,\frac12, \frac12,1\right)$, $\left(0,1,1,\frac12, \frac12\right)$, will be added. Thus, 
\begin{multline*}
D_{h_2} = \sum_{ v \in \widetilde{\mathcal{V}}}v+ \left(0,\frac12, \frac12,1,1\right)+\left(0,1,\frac12, \frac12,1\right)+\left(0,1,1,\frac12, \frac12\right)\\
+\left(1,\frac12,\frac12,1,1\right)+\left(1,1,\frac12,\frac12,1\right)+\left(1,1,1,\frac12,\frac12\right)\\
 -\sum_{v\in \mathcal{V}'}v-(0,0,0,0,0), \qquad\qquad
\end{multline*}
where
\[
\widetilde{\mathcal{V}} = \left\{
\begin{array}{c}(1,0,1,0,1),(1,0,1,1,0),(1,1,0,1,0),(0,1,0,1,0),\\ (1,0,1,0, 0), (0,1,0, 0,1), (1,0, 0,1,0), (0, 0,1,0,1)\end{array}\right\}.
\]
When no confusion will arise, we will also refer to this divisor as $D_2$.
\end{example}

It is natural to ask questions about which height functions give divisors that are equivalent in some sense. We discuss progress and open problems along these lines in future work.

\subsection{The combinatorics of color splitting}\label{SS:csplit}

One may use an edge color in a rainbow to label the vertices of an Adinkra in a new way.

\begin{algorithm}\label{A:colorSplit}
Given an Adinkra $A$ equipped with a rainbow, fix a color $j$ in the Adinkra rainbow and a starting vertex $v$. Consider the pair of colors $(j,j+2)$, which are adjacent to $j+1$ in the Adinkra rainbow. (Here, we use the cyclic ordering on the rainbow.) For each vertex $w \in A$, label $w$ with a $+$ sign if every path from $v$ to $w$ uses an even number of edges of colors $j$ and $j+2$, label $w$ with a $-$ sign if every path from $v$ to $w$ uses an odd number of edges of color $j$ and $j+2$, and label $w$ with $\times$ if there are distinct paths from $v$ to $w$ using both odd and even numbers of edges of color $j$ and $j+2$.
\end{algorithm}

\begin{definition}\label{def:jcs}
Let $A$ be an Adinkra equipped with a rainbow and fix a vertex $v$ of $A$. We say $A$ \emph{admits a $j$-color-splitting} if Algorithm~\ref{A:colorSplit} using the color $j$ labels every vertex of $A$ with a $+$ or $-$. In this case, we refer to the map $S_{j,v}\colon A \to \{+,-\}$ that partitions  the vertices using their $+$ or $-$ labels as the \emph{$j$-color-splitting} with starting vertex $v$.
\end{definition}

If $A$ admits a $j$-color-splitting with starting vertex $v$, then $A$ also admits a $j$-color-splitting with any other starting vertex $v'$. If $v$ and $v'$ had the same labels under the $j$-color-splitting with starting vertex $v$, then the two  $j$-color-splittings are identical; if they had opposite labels, then the $+$ and $-$ labels in the two splittings are reversed.
We illustrate an $N=3$ hypercube Adinkra $H^3$ with the rainbow $(\text{green}, \text{orange}, \text{blue})$ and a 1-color-splitting (that is, a green-color-splitting) using the bottom vertex as starting vertex in Figure~\ref{F:colorSplit}. 

\begin{figure}[H]
\scalebox{.22}{\includegraphics{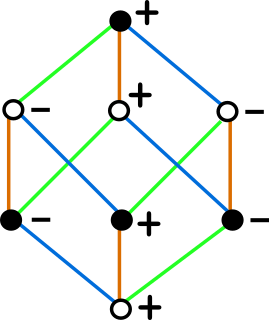}}
\caption{An $N=3$ hypercube Adinkra with an orange color-splitting}\label{F:colorSplit}
\end{figure}

\begin{proposition}\label{prop:hypercube_color_splittings}
Let $H^N$ be an $N$-dimensional hypercube Adinkra  with a rainbow $(1,\dots,N)$. Then for any starting vertex $v$ of $H^N$, $H^N$ admits a $j$-color-splitting by every color $j$ in the rainbow, each with $2^{N-1}$ vertices labeled $+$ and $2^{N-1}$ vertices labeled $-$.
\end{proposition}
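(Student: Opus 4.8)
The plan is to work directly with the coordinate description of $H^N$ and to show that the parity appearing in Algorithm~\ref{A:colorSplit} is an invariant of the pair of endpoints, not of the chosen path. Recall that $V(H^N) = \{0,1\}^N$ and that an edge of color $i$ joins two vertices differing in exactly the $i$-th coordinate. The first step is the observation that for \emph{any} walk $P$ from $v$ to $w$ in $H^N$ and any color $i$, the number of color-$i$ edges traversed by $P$ is congruent mod $2$ to the $i$-th coordinate of $v\oplus w$, where $\oplus$ denotes coordinatewise addition in $\mathbb{Z}/(2)$; indeed, traversing such an edge toggles the $i$-th coordinate and leaves all others fixed. Summing over the (at most two) colors in $\{j,j+2\}$, the number of edges of $P$ whose color lies in $\{j,j+2\}$ is congruent mod $2$ to a quantity depending only on $v$ and $w$ — namely $(v\oplus w)_j + (v\oplus w)_{j+2}$ when $N\ge 3$, and $(v\oplus w)_j$ in the degenerate cases $N\le 2$, where the colors $j$ and $j+2$ coincide.

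With this in hand, the second step is immediate: since the relevant parity is path-independent, Algorithm~\ref{A:colorSplit} never produces the label $\times$, so for every color $j$ and every starting vertex $v$ the hypercube $H^N$ admits a $j$-color-splitting in the sense of Definition~\ref{def:jcs}. Moreover the splitting is completely explicit: $S_{j,v}(w) = +$ precisely when $(v\oplus w)_j + (v\oplus w)_{j+2} = 0$, i.e.\ when $w_j + w_{j+2} = v_j + v_{j+2}$ in $\mathbb{Z}/(2)$ (and when $w_j = v_j$ in the case $N\le 2$).

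For the final step — the count — I would consider the group homomorphism $\phi\colon (\mathbb{Z}/(2))^N \to \mathbb{Z}/(2)$ given by $\phi(w) = w_j + w_{j+2}$ (or $\phi(w)=w_j$ when $N\le 2$). This map is surjective, so each of its two fibers has exactly $2^N/2 = 2^{N-1}$ elements. By the description above, the set of vertices labeled $+$ is the fiber $\phi^{-1}(\phi(v))$ and the set labeled $-$ is the complementary fiber; hence each class contains $2^{N-1}$ vertices, which is the assertion of the proposition.

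I do not anticipate a genuine obstacle here: the whole content is the path-independence of the parity, which is forced by the coordinate structure of the hypercube — equivalently, by the fact that every bicolored $4$-cycle (and these generate the cycle space of $H^N$) meets the color set $\{j,j+2\}$ in an even number of edges, namely $0$, $2$, or $4$. The only point that requires care is the bookkeeping in the degenerate range $N\le 2$, where $j$ and $j+2$ denote the same color and the formulas above collapse accordingly; this is a notational nuisance rather than a mathematical difficulty.
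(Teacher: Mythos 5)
Your proof is correct and is essentially the same as the paper's: both place $(\mathbb{Z}/2)^N$-coordinates on the hypercube so that a color-$i$ edge toggles the $i$-th coordinate, observe that the parity of color-$\{j,j+2\}$ edges along any path is therefore determined by the endpoints via the homomorphism $w \mapsto w_j + w_{j+2}$, and deduce the equal $2^{N-1}$-counts from the fibers of this surjective homomorphism. Your explicit justification of path-independence and your remark on the degenerate range $N\le 2$ are slightly more careful than the paper's write-up but do not constitute a different argument.
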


\begin{proof}
We place specific coordinates $(x_1, \dots, x_N) \in (\mathbb{Z}/2\mathbb{Z})^N$ on $A$ as follows. Place some vertex $v$ at the origin $(0, \dots, 0)$. Assign coordinates to the other vertices so that traveling on an edge of color $j$ corresponds to changing the $j$th coordinate. Let $s_j: (\mathbb{Z}/2\mathbb{Z})^N \to \mathbb{Z}/2\mathbb{Z}$ be given by $s_j(x_1, \dots, x_N) = x_{j}+x_{j+2}$. (Here we use the cyclic ordering on the rainbow, so $s_N = x_N+x_2$ and $s_{N-1} = x_{N-1}+x_1$.) Running Algorithm~\ref{A:colorSplit}, we see that a vertex with coordinates $(x_1, \dots, x_N)$ will be labeled with $+$ if its image under $s_j$ is 0, because an even number of the edges of color $j$ and $j+2$ have been used. Similarly, the vertex will be labeled with $-$ if the image under $s_j$ is 1. Since $s_j$ maps every vertex to either 0 or 1, the Adinkra admits a color-splitting. Because $s_j$ is a group homomorphism, there are equal numbers of vertices with $+$ and $-$ labels.
\end{proof}

In some cases, one of the $j$-color-splittings on the hypercube $H^N$ described in Proposition~\ref{prop:hypercube_color_splittings} will induce a $j$-color-splitting on an Adinkra whose chromotopology is the quotient of $H^N$ by a doubly even code. One need simply check whether the homomorphism $s_j$ defined in the proof of the proposition is constant on the cosets of the doubly even code. For example, each of the $j$-color-splittings of $H^4$ induces a $j$-color-splitting on the quotient of $H^4$ by the doubly even code generated by $\{(1,1,1,1)\}$.

We will use a special term for the case when there are $j$-color-splittings for every color:

\begin{definition}\label{def:cs}
    Let $A$ be an Adinkra that admits a $j$-color-splitting for every color $j$ in the rainbow of $A$. Let $S_{j,v}(u)$ be the label of vertex $u$ under the $j$-color-splitting with starting vertex $v$. Define $S_v\colon A \to \{+,-\}^N$ by $S_v(u) = (S_{1,v}(u), S_{2,v}(u), \ldots, S_{N,v}(u))$.  We call $S_v$ the {\it total-color-splitting} of $A$ with starting vertex $v$.
\end{definition}

\section{Geometrization and the Jacobian map}\label{S:Geometrization}

\subsection{Geometrization for $N=5$}

The authors of \cite{geom1,geom2} showed that an Adinkra equipped with a rainbow determines an algebraic curve $X$ and a Bely\u{\i} map $X \to \mathbb{P}^1$. Topologically, the curve is given by gluing a 2-cell to each 4-cycle in the Adinkra where the edge colors are adjacent in the rainbow; in this way, the Adinkra is embedded in the curve. In the case of the $N$-hypercube Adinkra $H^N$, the authors of \cite{geom1,geom2} used results of~\cite{CHQ} to obtain a specific algebraic model $X^N_\mathrm{alg}$ for $X$ as a complete intersection in $\mathbb{P}^{N-1}$. 

Just as general Adinkras can be realized as quotients of hypercube Adinkras, the corresponding curves can be realized as quotients of the hypercube curves. For $N\geq 2$, the genus of the curve corresponding to the quotient of the $N$-hypercube by a $k$-dimensional doubly even code is $g = 1+2^{N-k-3}(N-4)$. In the case of $N$-hypercube curves, we see that $N=2$ and $N=3$ correspond to genus 0 curves, for $N=4$ we have a genus 1 curve, and for $N=5$ we have a curve of genus 5. As the $N=5$ case presents a significant increase in geometric complexity, we wish to examine it more closely. In the following, we restrict our discussion to this case. 

Equipping $H^5$ with a rainbow determines an algebraic curve $X$ with an embedded copy of $H^5$ and a Bely\u{\i} map $\beta \colon X \to \mathbb{P}^1$, ramified over $\{0,1,\infty\}$, such that $\beta(H^5) = [0,1]$, the black vertices are mapped to 0, the white vertices are mapped to 1, and the face centers are mapped to $\infty$. In fact, this map factors through the map $\overline \beta\colon B_5 \to \mathbb{P}^1$, given by 
$$
\overline \beta(z) = \frac{z^5}{z^5+1},
$$ 
where $B_5 \cong \mathbb{C}\mathbb{P}^1$ is called a {\it beach ball} (visualized in the left pane of Figure~\ref{fig:alphas}). Thus, the 5th roots of $-1$ are the preimages under $\overline\beta$ of $\infty$, and lines through the origin and 5th roots of unity are mapped to $[0,1]$.  
Let  $\zeta   = \exp^{\frac{2\pi i}{10}} $, and let $\eta$ be the M\"obius transformation given by
$$
\eta(x) = \frac{x - \zeta}{x - \zeta^{-1}} \cdot \frac{\zeta^3- \zeta^{-1}}{ \zeta-\zeta^3 }.
$$
Following \cite{geom1}, we obtain a list of points $\alpha_1, \dots, \alpha_5$ in $\mathbb{P}^1 = \mathbb{C} \cup \infty$ by setting $\alpha_i =-\eta(\zeta^{2i-1}) $. Note that we have $\alpha_1 = 0$, $\alpha_2 = 1$, and $\alpha_5 = \infty$. Additionally, we define $\alpha_0 = \eta(0)$, and $\alpha_\infty = \eta(\infty)$. 
See the right pane of Figure \ref{fig:alphas} for  locations of $\alpha_0$ and $\alpha_\infty$ relative to  $-\alpha_j$ ($j = 1,2,3,4,5$) in the complex plane.

In the language of \cite{CHQ}, the $N=5$ curve $X^5_\mathrm{alg}$ is a generalized Fermat curve of type $(2,4)$. Such a curve is given by a complete intersection of the form
\begin{equation}\label{eq:completeint} \begin{cases}
	x_1^2 + x_2^2 + x_3^2 &= 0 \\
	\alpha_3 x_1^2 + x_2^2 + x_4^2 &= 0 \\
	\alpha_4 x_1^2 + x_2^2 + x_5^2 &= 0 \\
\end{cases}.
\end{equation}

\noindent To specify $X^5_\mathrm{alg}$, we take $\alpha_3$ and $\alpha_4$ as defined above. In fact, it is straightforward to verify that $\alpha_3 = \phi$ and $\alpha_4 = 1+\phi$, for $\phi = (1+\sqrt{5})/2 \approx 1.618$, the golden ratio.

Now consider the projection $\pi: X_\mathrm{alg}^5 \to \mathbb{P}^1$ given by $[x_1: \ldots : x_5] \mapsto [{x_1^2}:{x_2^2}]$. Note that 
\begin{equation}\label{eq:invpi}
\begin{array}{c}
\pi^{-1}([1:x]) = [1: \pm \sqrt{x} : \pm \sqrt{-1 -x}: \pm \sqrt{-\alpha_3 -x}: \pm \sqrt{-\alpha_4 -x}],\\[2mm]
\pi^{-1}([0:1]) = [0: \pm i : \pm 1: \pm 1: \pm 1].
\end{array}
\end{equation}
The relationship between the various spaces described is visualized in  the diagram below. 
\[
  \begin{tikzcd}
  &&&&\\
    H^5 \arrow[r, hook]  & X_{\rm alg}^5  \arrow[dr ,"\pi"] \arrow[r, "\pi\circ \eta^{-1}" ] \arrow[rr, "\beta", bend left]
    & B_5 \arrow[d,"\eta"]\arrow[r,"\overline\beta"] & \mathbb{CP}^1 \\
    &&\eta(B_5) \arrow[ur,swap, "\overline\beta \circ \eta^{-1}" ]& &&
  \end{tikzcd}
\]

We can use the map $\pi$ to describe the embedding of the $N=5$ hypercube Adinkra $H^5$ in $X^5_\mathrm{alg}$. Note that $\pi(X_{\rm alg}^5)$ is the image of $B_N$ under $\eta$ (see Figure \ref{fig:alphas}). Thus, $\pi^{-1}(\alpha_0)$ and $\pi^{-1}(\alpha_\infty)$ will coincide with the black and white vertices, respectively.  
The $2^4$ vertices of each color correspond to the $2^4$ sign choices in Equation~\ref{eq:invpi}. Similarly, $\pi^{-1}(\alpha_i)$ will coincide with the 8 possible face centers in $X_{\rm alg}^5$.

\begin{figure}[H]
    \centering
    \begin{tikzpicture}
        \node[anchor=south west,inner sep=0] at (-5.2,.7){\includegraphics[width=.27\textwidth]{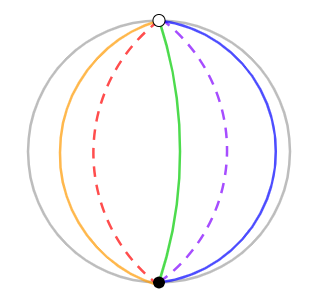}};
    \node at (-.6,2.9) {$\overset{\eta}{\longrightarrow}$};
    \begin{scriptsize}
        
    \filldraw[blue] (-1.35,2.79) circle (1.4pt);
    \filldraw[Green] (-2.69,2.79) circle (1.4pt);
    \filldraw[orange] (-4.34,2.79) circle (1.4pt);
    \filldraw[red] (-3.88,2.79) circle (1.4pt);
    \filldraw[violet] (-2,2.79) circle (1.4pt);

        \end{scriptsize}
    \node at (-5,.75) {$B_5$};
 
    \node[anchor=south west,inner sep=0] at (0,0) {\includegraphics[width=.6\textwidth]{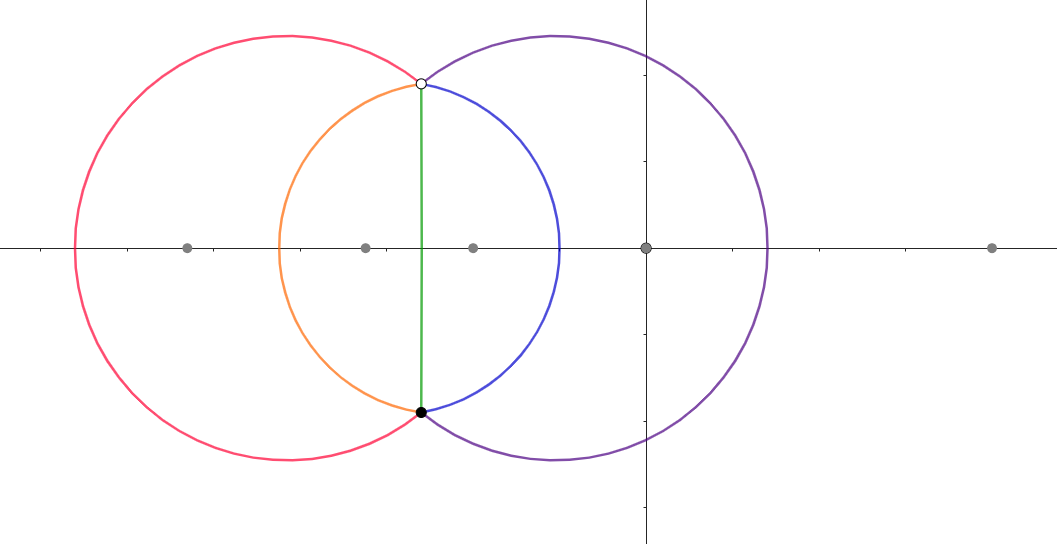}};
    \begin{scriptsize}
        
    \node at (6.25,2.6) {$-\alpha_1$};
    \node at (4.6,2.6) {$-\alpha_2$};
    \node at (3.38,2.6) {$-\alpha_3$};
    \node at (1.9,2.6) {$-\alpha_4$};
    \node at (9.6,2.6) {$-\alpha_5$};
    \node at (3.95,.78) {$\alpha_0$};
    \node at (3.95,4.75) {$\alpha_\infty$};
    \node[blue] at (5.5,3.) {$-c_1$};
    \filldraw[blue] (5.28,2.79) circle (1.4pt);
    \node[Green] at (3.9,3) {$-c_2$};
    \filldraw[Green] (3.95,2.79) circle (1.4pt);
    \node[orange] at (2.27,3) {$-c_3$};
    \filldraw[orange] (2.59,2.79) circle (1.4pt);
    \node[red] at (0.35,3) {$-c_4$};
    \filldraw[red] (0.7,2.79) circle (1.4pt);
    \node[violet] at (7.48,3) {$-c_5$};
    \filldraw[violet] (7.2,2.79) circle (1.4pt);
        \end{scriptsize}
    \node at (9,.75) {$\eta(B_5)$};

\end{tikzpicture}   
\caption{The points $-\alpha_i$ and edge images under $\pi$ in $\mathbb{P}^1$}
    \label{fig:alphas}
\end{figure}

We will embed the edges of the Adinkra in $X^5_\mathrm{alg}$ using the inverse images under $\pi$ of paths between $\alpha_0$ and $\alpha_\infty$ in $\C$, as follows. To the color $j$ in our rainbow $(1,\dots,5)$, we associate the  path $\gamma_j$ parametrized by $\eta(t \zeta^{2j})$, $t\in (0,\infty)$. This path travels from $\alpha_0$ to $\alpha_\infty$ and crosses the $x$-axis at a unique point $-c_j$. Note that $c_j>\alpha_j$ for $j=1,2,3,4$,   $c_j<\alpha_{j+1}$ for $j=1,2,3$,   while $c_5 < \alpha_1$. We thus obtain a rainbow of paths in the complex plane, as shown in the right pane of Figure~\ref{fig:alphas}. Each edge of color $j$ in $X^5_\mathrm{alg}$ is then a connected component of $\pi^{-1}(\gamma_j)$.

The points in $\pi^{-1}(-\alpha_{j+1})$ are called face centers and denoted by $\mathbf{f}_{j,j+1}(f)$. There are 8 face centers corresponding to each pair of edge colors, as shown in Equation~\eqref{eq:faceCenter}.

\begin{align}\label{eq:faceCenter}
\begin{split}
\mathbf{f}_{5,1}\in \pi^{-1}(-\alpha_1) &= \pi^{-1}(0) =[1 : \, 0 :\pm i : \pm  i\sqrt{\phi} : \pm i\sqrt{\phi + 1}] \\
\mathbf{f}_{1,2}\in \pi^{-1}(-\alpha_2)& = \pi^{-1}(-1)  = [1 :\pm i : \, 0 :\pm  i\sqrt{\phi-1}: \pm i\sqrt{\phi }]\\
\mathbf{f}_{2,3}\in \pi^{-1}(-\alpha_3) & = \pi^{-1}(-\phi) = [1 :\pm i\sqrt{\phi}: \pm  \sqrt{\phi-1} : \, 0 : \pm 
i] \\
\mathbf{f}_{3,4}\in \pi^{-1}(-\alpha_4)& = \pi^{-1}(-\phi-1)  = [1 :\pm i\sqrt{\phi + 1} : \pm \sqrt{\phi} : \pm 1 :\, 
0] \\
\mathbf{f}_{4,5}\in \pi^{-1}(-\alpha_5)& = \pi^{-1}(\infty) = [0 :\, 1 :\pm i: \pm i :\pm i] 
\end{split}
\end{align}

We can use the points $c_j$ to determine vertex adjacency in $X^5_\mathrm{alg}$ along an edge of a specified color in the following way.
Consider the edge of color $j$ adjacent to a given (WLOG) white vertex 
$${\bf w} =[1:w_1:w_2:w_3:w_4:w_5]=[1: \pm \sqrt{\alpha_\infty} : \pm \sqrt{-1 -\alpha_\infty}: \pm \sqrt{-\alpha_3 -\alpha_\infty}: \pm \sqrt{-\alpha_4 -\alpha_\infty}].$$
 For $k = 2, \ldots, N$, let $r_{k}(j)$ be the radicand  of the $k$th coordinate $w_{k}$ of $w$, evaluated at $-c_j$ in place of $\alpha_\infty$. For example, $r_2(j) = -c_j$, $r_3(j) = -1+c_j$, etc. Then, the unique edge of color $j$ with one endpoint equal to white vertex ${\bf w}$, has as its other endpoint the black vertex
 $$
 {\bf b} = [1: {\rm sign}(r_{2}(j))\cdot \overline{w}_2:{\rm sign}(r_{3}(j)) \cdot\overline{w}_3:{\rm sign}(r_{4}(j))\cdot\overline{w}_4:{\rm sign}(r_{5}(j))\cdot \overline{w}_5].
 $$  

We summarize our description of the Adinkra embedding in the following lemmas.

\begin{lemma}\label{lem:adj}
The black and white vertices of $H^5$ are embedded in $X_\mathrm{alg}^5$ as 
\begin{align*}
{\bf b} &=[1: \pm \sqrt{\alpha_0} : \pm \sqrt{-1 -\alpha_0}: \pm \sqrt{-\alpha_3 -\alpha_0}: \pm \sqrt{-\alpha_4 -\alpha_0}]
&=&[1: \pm \zeta^8\sqrt{\phi}  : \pm \zeta :  \pm i \zeta^9 : \pm \zeta^2 \sqrt{\phi}  ], \text{ and}\\
{\bf w} &=[1: \pm \sqrt{\alpha_\infty} : \pm \sqrt{-1 -\alpha_\infty}: \pm \sqrt{-\alpha_3 -\alpha_\infty}: \pm \sqrt{-\alpha_4 -\alpha_\infty}]
&=&[1: \pm \zeta^2\sqrt{\phi}  : \pm \zeta^9 :  \pm i \zeta : \pm \zeta^8 \sqrt{\phi}  ],    
\end{align*}
respectively. A black vertex ${\bf b}$ connected to a white vertex ${\bf w}=[1:w_1:w_2:w_3:w_4:w_5]$ by an edge of color $j$ has coordinates ${\bf b} = [1: {\rm sgn}_2(j)\cdot \overline{w}_2 : {\rm sgn}_3(j)\cdot \overline{w}_3:{\rm sgn}_4(j) \cdot\overline{w}_4: {\rm sgn}_5(j)\cdot \overline{w}_5]$, where the sign ${\rm sgn}_k(j)$ is as given in Table~\ref{tab:adj}.
\end{lemma}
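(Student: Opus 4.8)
The plan is to derive both claims from the explicit description of $\pi^{-1}$ in Equation~\eqref{eq:invpi}, together with a short monodromy argument along the paths $\gamma_j$.

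\emph{The vertex coordinates.} First I would evaluate the M\"obius transformation $\eta$ at $0$ and $\infty$. Writing $\zeta = \exp(\pi i/5)$, so that $\zeta^{-1}=\overline{\zeta}$ and $2\cos(\pi/5)=\phi$, a direct simplification gives $\alpha_\infty = \eta(\infty) = \phi\,\zeta^4$ and $\alpha_0 = \eta(0) = \zeta^2\alpha_\infty = \phi\,\zeta^6$; in particular $\alpha_0 = \overline{\alpha_\infty}$, with $\alpha_\infty$ in the open upper half-plane and $\alpha_0$ in the open lower half-plane. Substituting $x=\alpha_0$ (resp.\ $x=\alpha_\infty$) into Equation~\eqref{eq:invpi} and using $\alpha_3=\phi$, $\alpha_4=1+\phi$ yields the first displayed forms of $\mathbf b$ and $\mathbf w$. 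For the second forms one simplifies the four radicands: with $\phi^2=\phi+1$ and $\cos(2\pi/5)=1/(2\phi)$ one checks $-1-\alpha_0=\zeta^2$, $-\alpha_3-\alpha_0=-\phi(1+\zeta^6)=\zeta^3$, and $-\alpha_4-\alpha_0=\phi\,\zeta^4$ (the radicands at $\alpha_\infty$ being the complex conjugates of these), and then extracts square roots. These are one-line computations with fifth roots of unity.

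\emph{The adjacency rule.} Fix a white vertex $\mathbf w$ and a color $j$. The edge of color $j$ at $\mathbf w$ is the closure of a connected component of $\pi^{-1}(\gamma_j)$; since $\gamma_j$ meets $\mathbb R$ only at $-c_j$, which is not among the branch points $0,-1,-\phi,-(1+\phi),\infty$ of $\pi$, the map $\pi$ restricts to an unramified cover over all of $\gamma_j$, so this component is an arc running from $\mathbf w$ (at $t=\infty$) to a uniquely determined black vertex $\mathbf b$ (at $t=0$), along which each coordinate $x_k$ with $k\ge 2$ is a continuous branch of $\sqrt{r_k(x)}$, where $r_2(x)=x$, $r_3(x)=-1-x$, $r_4(x)=-\phi-x$, $r_5(x)=-(1+\phi)-x$. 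Because $\alpha_0=\overline{\alpha_\infty}$ and the $r_k$ have real coefficients, $r_k(\alpha_0)=\overline{r_k(\alpha_\infty)}$; hence $b_k^2=\overline{w_k^{\,2}}$, so $b_k=\pm\overline{w}_k$, while $b_1=1=w_1$. To fix the sign, recall that $\gamma_j$ runs from $\alpha_0$ in one open half-plane to $\alpha_\infty$ in the other, meeting $\mathbb R$ exactly once, at $-c_j$; since each $r_k$ is a real-affine bijection of $\mathbb C$ carrying $\mathbb R$ bijectively onto $\mathbb R$, the path $r_k(\gamma_j)$ likewise passes from one open half-plane to the other and meets $\mathbb R$ exactly once, at $r_k(j):=r_k(-c_j)$, otherwise avoiding $\mathbb R$ (in particular avoiding $0$). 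Analytic continuation of the square root along $r_k(\gamma_j)$ therefore acquires a factor $-1$ precisely when this single crossing lies on the negative real axis, i.e.\ when $r_k(j)<0$, and a factor $+1$ when $r_k(j)>0$. This is exactly the assertion $\mathbf b = [\,1:\operatorname{sign}(r_2(j))\,\overline{w}_2:\cdots:\operatorname{sign}(r_5(j))\,\overline{w}_5\,]$, i.e.\ $\operatorname{sgn}_k(j)=\operatorname{sign}(r_k(j))$.

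\emph{Completing the table, and the main difficulty.} It remains to fill in Table~\ref{tab:adj} by evaluating $\operatorname{sign}(r_k(j))$ for $k=2,\dots,5$, $j=1,\dots,5$, from the inequalities $0<c_1<1<c_2<\phi<c_3<1+\phi<c_4$ and $c_5<0$ noted earlier: for instance $r_2(j)=-c_j$ is negative for $j\le 4$ and positive for $j=5$, $r_3(j)=c_j-1$ is positive exactly for $j\in\{2,3,4\}$, $r_4(j)=c_j-\phi$ is positive exactly for $j\in\{3,4\}$, and $r_5(j)=c_j-(1+\phi)$ is positive only for $j=4$. I expect the main obstacle to be the monodromy step: one must check carefully that $\gamma_j$ crosses the real axis exactly once and that applying $r_k$ preserves this, and then get the direction of the crossing relative to the square-root branch cut exactly right. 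Granting that, the two coordinate displays and the sign table follow from the elementary computations above.
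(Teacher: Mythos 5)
Your proposal is correct and follows essentially the same route as the paper: it evaluates $\eta$ at $0$ and $\infty$ to get $\alpha_0=\phi\zeta^6=\overline{\alpha_\infty}$, substitutes into the fiber formula for $\pi^{-1}$, and determines adjacency by continuing the square roots along $\gamma_j$, reading the sign off from whether the unique real crossing $r_k(-c_j)$ is positive or negative; your sign table agrees with Table~\ref{tab:adj}. If anything, your branch-continuation argument makes explicit a monodromy step that the paper only asserts in the discussion preceding the lemma.
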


\begin{table}[H]
    \begin{tabular}{|c||c|c|c|c|c|}
    \hline
    ${\rm sgn}_k(j)$ &\multicolumn{5}{|c|}{edge color $j$}  \\\hline
$k$ & 1 & 2 & 3 & 4 & 5 \\\hline \hline
          1 & + & + & + & + &  + 
          \\\hline
          2 & $-$ & $-$ & $-$ & $-$ & + 
          \\\hline
          3 & $-$ & + & + & + &$-$ 
          \\\hline
         4 & $-$ & $-$ & + & + & $-$ 
         \\\hline
          5 & $-$ & $-$ & $-$ & + & $-$ 
          \\\hline
          
    \end{tabular}
    
    \vspace*{.3cm}
    
    \caption{Adjacency information for $X_{\rm alg}^5$. The $k$th coordinate of the unique neighbor ${\bf w}$ of ${\bf b}$ (or ${\bf b}$ of ${\bf w}$)  via an edge of color $j$ can be found by conjugating the $k$th coordinate of ${\bf b}$ (${\bf w}$) and multiplying by $\pm 1$, as indicated in column $j$, row $k$.}\label{tab:adj}
\end{table}

Recall from Equation~\eqref{eq:faceCenter}, that each coordinate of a face center point in ${\rm X}_{\rm alg}^5$ is either real or imaginary. This feature can be used to determine adjacency information for face center points as described in the following lemma. 

\begin{lemma}\label{lem:adj_fc}
 Suppose that $f$ is a saddle point in $\widetilde H^5$ adjacent to a black or white vertex $v$. Let $\mathbf{f}$ denote the image of this point in $X_{\rm alg}^5$, and similarly let $\mathbf{v}$ denote the image of $v $ in  $X_{\rm alg}^5$. Then the  $k$th coordinates $\mathbf{f}_k$ of $\mathbf{f}$ are  determined 
by the following equations; for each $k = 1, \ldots, 5$
\[
\begin{array}{rl}
    {\rm sign}({\rm Im}(\mathbf{v}_k))&= {\rm sign}({\rm Im}(\mathbf{f}_k)) \text{ if } \mathbf{f}_k \in i \mathbb{R}, \text{ and }\\[1mm]
    {\rm sign}({\rm Re}(\mathbf{v}_k))& ={\rm sign}({\rm Re}(\mathbf{f}_k))  \text{ if }  \mathbf{f}_k \in  \mathbb{R} . 
\end{array}
\]
\end{lemma}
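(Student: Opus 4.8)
The plan is to deduce the statement from a one-dimensional intermediate value argument along a carefully chosen path on $X_{\rm alg}^5$ joining $\mathbf v$ to $\mathbf f$. The key algebraic input, read off from \eqref{eq:completeint} and \eqref{eq:invpi}, is that on the affine chart $\{x_1\neq 0\}$ the normalized coordinate $y_k=x_k/x_1$ satisfies $y_k^2=a_k-\pi$, where $\pi$ denotes the affine coordinate on the target of the projection and $a_k\in\{0,-1,-\phi,-(1+\phi)\}\subset\R$; likewise, on $\{x_2\neq 0\}$ each normalized coordinate squared is a (non-degenerate) M\"obius function of $\pi$ with real coefficients. Since a real-coefficient affine or M\"obius map carries non-real numbers to non-real numbers, $y_k(p)$ is real (resp.\ purely imaginary) exactly when $y_k(p)^2=a_k-\pi(p)$ is a non-negative (resp.\ non-positive) real number; in particular $\mathrm{Re}\,y_k(p)$ and $\mathrm{Im}\,y_k(p)$ can vanish only at points $p$ with $\pi(p)\in\R\cup\{\infty\}$. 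At the face center this already recovers \eqref{eq:faceCenter}: since $\pi(\mathbf f)=-\alpha_{j+1}$ is real (or $\infty$), every coordinate of $\mathbf f$ is real or purely imaginary, with exactly one index $k$ for which $\mathbf f_k=0$, namely the coordinate over which $\pi$ ramifies at $\mathbf f$.

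Next I would supply the topological input. By the construction of $X_{\rm alg}^5$ from the Adinkra and rainbow in \cite{geom1,geom2}, the saddle point $f=f_{j,j+1}(v)$ is the center of the 2-cell $P$ glued along the unique $(j,j+1)$-colored 4-cycle through $v$; this $P$ is a closed disk with $\pi(P)\subseteq\overline{R}$, where $R\subset\eta(B_5)$ is the region bounded by $\gamma_j\cup\gamma_{j+1}$ that contains $-\alpha_{j+1}$, the point $\mathbf f$ is the unique point of $P$ lying over $-\alpha_{j+1}$, and $\mathbf v\in\partial P$ has $\pi(\mathbf v)\in\{\alpha_0,\alpha_\infty\}$. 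The arcs $\gamma_j,\gamma_{j+1}$ meet $\R\cup\{\infty\}$ only at the points $-c_j,-c_{j+1}$, so $\overline{R}\cap(\R\cup\{\infty\})$ is a single arc, and the ordering of the $\alpha_i$ and $c_i$ recorded in the construction of the paths $\gamma_i$ shows that this arc contains $-\alpha_{j+1}$ but none of the other $-\alpha_m$. Since $\alpha_0,\alpha_\infty\notin\R\cup\{\infty\}$, the point $\pi(\mathbf v)$ lies in one of the two closed ``halves'' into which that arc divides $\overline{R}$; I can then choose the path $e$ in $P$ from $\mathbf v$ to $\mathbf f$ so that its image $\pi(e)$ is a simple arc from $\pi(\mathbf v)$ to $-\alpha_{j+1}$ lying, apart from its endpoint, in the interior of that half --- hence meeting $\R\cup\{\infty\}$ only at $-\alpha_{j+1}$. (Such a lift exists and terminates at $\mathbf f$ because $\pi$ is unbranched over $\overline{R}\setminus\{-\alpha_{j+1}\}$ and $P$ meets $\pi^{-1}(-\alpha_{j+1})$ in the single point $\mathbf f$.)

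The intermediate value theorem then finishes the argument. Along $e$ with its endpoint removed, $\pi(p)$ is non-real, so $y_k(p)^2=a_k-\pi(p)$ is non-real, so $\mathrm{Re}\,y_k(p)\neq 0$ and $\mathrm{Im}\,y_k(p)\neq 0$ for every $k$. Fix an index $k$ with $\mathbf f_k\neq 0$: if $\mathbf f_k\in\R$, then $\mathrm{Re}\,y_k$ is continuous and nowhere zero on $e$, so $\mathrm{Re}\,\mathbf v_k$ and $\mathrm{Re}\,\mathbf f_k$ have the same sign; if $\mathbf f_k\in i\R$, the same reasoning applied to $\mathrm{Im}\,y_k$ gives the claim. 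The lone face center $\mathbf f_{4,5}$, which lies over $\pi=\infty$, is treated identically in the chart $\{x_2\neq 0\}$. I expect the main obstacle to be the topology of the middle step --- translating the combinatorial adjacency $v$--$f$ in $\widetilde H^5$ into a concrete path inside the 2-cell $P$, verifying $\pi(P)\subseteq\overline{R}$, and locating $\overline{R}\cap\R$ precisely enough that $-\alpha_{j+1}$ is isolated; once that is done the analytic part is immediate. Two bookkeeping remarks: the statement should be read as determining the nonzero coordinates of $\mathbf f$ (the vanishing coordinate being the ramified one), and $\mathbf v$ and $\mathbf f$ should be normalized in a common chart $\{x_\ell\neq 0\}$ with $\mathbf v_\ell\neq 0$ and $\mathbf f_\ell\neq 0$, which exists since $\mathbf f$ has only one vanishing coordinate.
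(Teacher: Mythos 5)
Your proof is correct, and it rests on the same algebraic mechanism as the paper's --- each normalized coordinate is a square root of a real-affine (or, in the chart $\{x_2\neq 0\}$, real M\"obius) function of the projection coordinate, so its real (resp.\ imaginary) part can vanish only where the radicand lands on the ray $\theta=\pi$ (resp.\ $\theta=0$), hence only over $\R\cup\{\infty\}$ --- but you organize the continuity argument differently. The paper works region-by-region and coordinate-by-coordinate: it tracks the full images $-L_j-1$, $-L_j-\phi$, etc., of the face region under the radicand maps and tabulates in Table~\ref{tab:face_signs} which of them meet the rays $\theta=0$ and $\theta=\pi$, reading off for each pair $(j,k)$ which of the two sign claims survives. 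You instead choose a single path in the face from $\mathbf{v}$ to $\mathbf{f}$ whose $\pi$-image avoids $\R\cup\{\infty\}$ except at its terminal point; along it every radicand is simultaneously non-real, so both $\mathrm{Re}$ and $\mathrm{Im}$ of every coordinate keep a constant sign, and which claim survives at $\mathbf{f}$ is dictated purely by whether $\mathbf{f}_k$ is real, imaginary, or zero per Equation~\eqref{eq:faceCenter}. This buys uniformity --- no table, no case split over $j$ and $k$, and an a priori explanation of why the one unconstrained coordinate is exactly the ramified, vanishing one --- at the cost of the path-lifting step (lifting an arc of the bigon $\overline{L_j}$ into the unique $(j,j+1)$-face through $\mathbf{v}$ and checking that the lift terminates at $\mathbf{f}$), which you handle correctly since the path avoids all branch points other than its endpoint. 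Two small bookkeeping remarks: your normalization $y_k^2=a_k-\pi$ has the wrong sign for $k=2$ (the paper's chart gives $y_2^2=\pi$), which is immaterial since only the realness of the affine map matters; and, exactly as in the paper's own treatment of $j=4$, your chart switch for $\mathbf{f}_{4,5}$ means the sign comparison there is between the $\{x_2\neq 0\}$-chart representatives of $\mathbf{v}$ and $\mathbf{f}$ rather than the standard ones.
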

\begin{proof}

The face centers of $H^5$ appear in $X_{\rm alg}^5$ as described in Equation~\eqref{eq:faceCenter}; in particular, we can assume each coordinate of a face center in $X_{\rm alg}^5$  is either strictly  real or strictly imaginary.

Let $L_{j, j+1} = \{x \in B_5 \mid \arg(x) \in (\zeta^{2j},\zeta^{2j+2})\}$ be the slice of $B_5$ captured by $\frac{2\pi\cdot 2j }{10}<\theta <\frac{2\pi\cdot 2(j+2) }{10}$, for $i = 1, 2, 3, 5$, let $L_j = \eta(L_{j,j+1})$ denote its image in $\eta(B_5)$. Let $L_4$ denote the image of $L_{4,5}$ under $\eta$ and the additional M\"obius transformation $z \mapsto 1/z$. These sets can be seen in the pane on the right of Figure~\ref{fig:alphas}  as cut out by the curves $\eta(t\zeta^{2j})$. Each component  of $\pi^{-1}(L_j)$ corresponds to one of the eight $(j,j+1)$-colored faces in $X_{\rm alg}^5$. 
Recall that 
\[
\begin{array}{rl}
\pi^{-1}([1:x]) &= [1: \pm \sqrt{x} : \pm \sqrt{-1 -x}: \pm \sqrt{-\phi -x}: \pm \sqrt{-(\phi+1) -x}], \\[1mm]
(\text{for the $j = 4$ case}) \quad 
\pi^{-1}([x:1]) &= [\pm  \sqrt{x}: 1: \pm \sqrt{-x-1}: \pm \sqrt{-\phi x-1}:\pm \sqrt{-(\phi+1)x-1}].
\end{array}
\]
Note that for $x$ and $x'$ in a path connected subset $K$ of $\mathbb{C}$,  ${\rm sign}({\rm Re}(\sqrt{x})) = {\rm sign}({\rm Re}(\sqrt{x'}))$ unless a path between $x$ and $x'$ in $K$ intersects the ray $\theta = \pi$. Similary, ${\rm sign}({\rm Im}(\sqrt{x})) = {\rm sign}({\rm Im}(\sqrt{x'}))$ unless a path between $x$ and $x'$ in $K$ intersects the ray $\theta = 0$.

For $j = 1,2, 3, 5$ , we consider the path-connected sets  
\begin{align}\label{eq:slicesets}
\begin{split}
    L_{j},& \\
-L_j-1& = \{-c-1 \mid c \in L_j\},\\
-L_j-\phi &= \{-c-\phi \mid c \in L_j\}, \\
-L_j-\phi-1 &= \{-c-\phi - 1 \mid c \in L_j\}
\end{split}
\end{align} 
in $\eta(B_5)$.
For the $j = 4$ case, we use instead the similarly defined
 sets 
 \begin{equation}\label{eq:slicesets4}
     L_4, \quad -L_4-1,\quad -\phi L_4-1,\quad -(\phi + 1)L_4-1
 \end{equation}
Table~\ref{tab:face_signs}  displays which of these sets intersect the rays $\theta = 0$ or $\theta = \pi$ for $j = 1, \ldots, 5$.     From this table we see, for example if $j = 2$, then sign of the imaginary parts of $\mathbf{f}_k$ and $\mathbf{v}_k$ must be the same for $k = 2, 5$, while the sign of the real parts of $\mathbf{f}_3$ and $\mathbf{v}_3$ must agree. Note that the real and imaginary parts of the 4th coordinates are not forced to agree, per Table~\ref{tab:face_signs}. For $j=4$, the sign of the imaginary components of $\mathbf{f}_k$ and $\mathbf{v}_k$ must be the same for $k=3,4,5$.

\begin{table}[H]
    \begin{tabular}{|c|c||c|c|c|c|c|}
    \hline
     &&\multicolumn{5}{|c|}{ $j$}  \\\hline
    {\rm coordinate}& {\rm set} &1 & 2 & 3 & 4 & 5 \\\hline \hline
    2&$L_{j}$ & $\pi$ & $\pi$ & $\pi$ & & 0, $\pi$   
    \\
    1& $L_4$ & & & & 0, $\pi$ & 
    \\\hline
    \multirow{ 2}{*}{3} &$-L_{j} - 1$ & $0, \pi$ & 0 & 0 &  &$\pi$ 
    \\
     &$-L_{4} - 1$ &  &  & & $\pi$ & 
    \\\hline
    \multirow{ 2}{*}{4} & $-L_{j}-\phi  $ & $\pi$ & 0, $\pi$ & 0 &  & $\pi$ 
    \\
     & $ -\phi L_4-1 $ &  &  &  & $\pi$ &  
    \\\hline
    \multirow{ 2}{*}{5} & $-L_{j}-\phi - 1  $ & $\pi$ & $\pi$ & 0, $\pi$ &  & $\pi$ 
    \\
     & $ ( -(\phi + 1)L_4-1)$ &  &  &  & $\pi$ &  
    \\\hline
          
    \end{tabular}
   \vspace*{.3cm}
    
    \caption{A summary of which of the rays $\theta = \pi$ and $\theta = 0$ intersect the path connected sets defined in Equations~\ref{eq:slicesets} and \ref{eq:slicesets4}.}\label{tab:face_signs}
\end{table}

\end{proof}

 The adjacency relationship among the face center points in $X_{\rm alg}^5$, can now be stated precisely: 

\begin{corollary}\label{cor:fc_geom}
Suppose that $v, v_j \in H^5$ are adjacent along color $j$, that ${f}_{k,k+1}(v)$ and ${f}_{k,k+1}(v_j)$ are saddle points in $\tilde H^5$ adjacent to $v$ and $v_j$, and that ${\bf f}_{k,k+1}$ and ${\bf f}'_{k,k+1}$ are the  images of ${f}_{k,k+1}(v)$ and ${f}_{k,k+1}(v_j)$, respectively in $X_{\rm alg}^5$.  Then the $\ell$th coordinates $x'_\ell$ and $x_\ell$ of ${\bf f}'_{k,k+1}$ and ${\bf f}_{k,k+1}$, respectively, satisfy $x'_\ell={\rm sgn}_\ell(j)\overline{x_\ell}$ (see Table~\ref{tab:adj}). 
\end{corollary}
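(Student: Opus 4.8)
The plan is to package both adjacency rules---Lemma~\ref{lem:adj} for vertices and the one we want for face centers---as the action of a single anti-holomorphic involution of $X_{\rm alg}^5$. For a color $j$, define $\sigma_j\colon\mathbb{P}^4\to\mathbb{P}^4$ by $\sigma_j([x_1:\cdots:x_5]) = [{\rm sgn}_1(j)\,\overline{x_1} : \cdots : {\rm sgn}_5(j)\,\overline{x_5}]$, with the signs read off Table~\ref{tab:adj}. Because each ${\rm sgn}_\ell(j)^2=1$, the map $\sigma_j$ is a well-defined involution of $\mathbb{P}^4$; because the equations~\eqref{eq:completeint} have real coefficients and involve only even powers of the coordinates, $\sigma_j$ restricts to an anti-holomorphic involution of $X_{\rm alg}^5$; and because $\pi([x])=[x_1^2:x_2^2]$, one has $\pi\circ\sigma_j = c\circ\pi$, where $c$ is complex conjugation on $\mathbb{P}^1$. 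Since each $-\alpha_i$ is real or $\infty$ and hence fixed by $c$, the involution $\sigma_j$ carries each face-center fiber $\pi^{-1}(-\alpha_i)$ to itself, hence permutes the eight face centers lying in it; and by Lemma~\ref{lem:adj} it sends each vertex $\mathbf v$ to its color-$j$ neighbor $\mathbf v_j$. Interpreted projectively, the conclusion of Corollary~\ref{cor:fc_geom} is exactly the equality $\sigma_j(\mathbf f_{k,k+1}) = \mathbf f'_{k,k+1}$, so it suffices to prove this.

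To prove it I would compare coordinate signs. By Equation~\eqref{eq:faceCenter} the eight points of the fiber $\pi^{-1}(-\alpha_{k+1})$---which are exactly the type-$(k,k+1)$ face centers---are distinguished by the signs of their four nonzero coordinates (the fifth is identically $0$), each of which is $\pm$ a fixed real or imaginary value. Now $\mathbf f'_{k,k+1}$ is the image of the saddle point $f_{k,k+1}(v_j)$ adjacent to $v_j$, so Lemma~\ref{lem:adj_fc} pins the sign of the real (resp.\ imaginary) part of each nonzero real (resp.\ imaginary) coordinate of $\mathbf f'_{k,k+1}$ to that of the corresponding coordinate of $\mathbf v_j$, and Lemma~\ref{lem:adj} rewrites $(\mathbf v_j)_\ell$ as ${\rm sgn}_\ell(j)\,\overline{(\mathbf v)_\ell}$. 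Symmetrically, Lemma~\ref{lem:adj_fc} applied to the saddle point $f_{k,k+1}(v)$ adjacent to $v$ pins the coordinate signs of $\mathbf f_{k,k+1}$ to those of $\mathbf v$, and the definition of $\sigma_j$---together with the facts that conjugation fixes real parts and negates imaginary parts---then pins the coordinate signs of $\sigma_j(\mathbf f_{k,k+1})$ to those of $\mathbf v$. A direct comparison should show the two assignments coincide: on a nonzero real coordinate each equals ${\rm sgn}_\ell(j)\cdot{\rm sign}({\rm Re}\,(\mathbf v)_\ell)$, on a nonzero imaginary coordinate each equals $-{\rm sgn}_\ell(j)\cdot{\rm sign}({\rm Im}\,(\mathbf v)_\ell)$, and the zero coordinate is $0$ on both sides. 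Since these signs separate the eight face centers, $\sigma_j(\mathbf f_{k,k+1})=\mathbf f'_{k,k+1}$. (Equivalently, one can skip the involution and simply chain the two lemmas coordinate by coordinate.)

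I expect the main obstacle to be bookkeeping of three kinds rather than anything conceptual. First, one must check that Lemma~\ref{lem:adj_fc} really does determine the sign of \emph{every} nonzero coordinate of a type-$(k,k+1)$ face center: inspecting Table~\ref{tab:face_signs}, the only coordinate whose sign is left undetermined in each row is exactly the one that is identically $0$ in Equation~\eqref{eq:faceCenter}, so nothing needed is lost. Second, $\sigma_j(\mathbf f_{k,k+1})=\mathbf f'_{k,k+1}$ is an identity of projective points, so representatives must be chosen coherently: for face centers of types $(5,1),(1,2),(2,3),(3,4)$ one can fix the first coordinate to be $1$, which ${\rm sgn}_1(j)=+$ preserves, but for type $(4,5)$ the distinguished coordinate is the second one, and applying $\sigma_j$ may introduce a global factor $-1$ that has to be divided out before comparing signs. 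Third, in the degenerate case $j\in\{k,k+1\}$ the vertices $v$ and $v_j$ lie in a common $(k,k+1)$-cycle, so $f_{k,k+1}(v)=f_{k,k+1}(v_j)$ and $\mathbf f_{k,k+1}=\mathbf f'_{k,k+1}$; here the argument still goes through verbatim and shows that $\sigma_j$ fixes this face center, which can also be verified directly from Table~\ref{tab:adj} and Equation~\eqref{eq:faceCenter}.
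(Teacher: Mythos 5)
Your argument is correct and is essentially the route the paper intends: the corollary is stated without proof precisely because it follows by chaining Lemma~\ref{lem:adj_fc} (applied to the saddle points adjacent to $v$ and to $v_j$) with Lemma~\ref{lem:adj}, which is exactly your coordinate-by-coordinate comparison. The packaging as an anti-holomorphic involution $\sigma_j$ and the explicit attention to the zero coordinate, the projective normalization for the $(4,5)$ fiber, and the degenerate case $j\in\{k,k+1\}$ are careful additions but do not change the substance.
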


\begin{remark}\label{rem:choice}
    Note that there is a non-canonical choice to be made  regarding the relationship between the vertices in $H^5$ and the points in $\pi^{-1}(\alpha_0)$ and $\pi^{-1}(\alpha_\infty)$. That is, any vertex $v \in H^5$ can be associated to any point in $\pi^{-1}(\alpha_0) \cup \pi^{-1}(\alpha_\infty)$. However, once any single assignment is made, the rest of the assignments are determined by rainbow and adjacency structures of both $H^5$ and $X_{\rm alg}^5$. 
    The choice (${\bf w}{(1,1,1,1,1)} =[1: \sqrt{\alpha_\infty} : \sqrt{-1 -\alpha_\infty}:  \sqrt{-\alpha_3 -\alpha_\infty}: \sqrt{-\alpha_4 -\alpha_\infty}]$) is made for the remaining examples in this paper. The results stemming from a different choice of ``base point'' in the Adinkra surface can be obtained from the computations done here using Table~\ref{tab:adj}. 
\end{remark}

 We demonstrate this choice of base point in the following example. 
 
\begin{example}\label{ex:1}
     First, we choose to associate the white vertex $(1,1,1,1,1)$ in $H^5$ to  the white vertex ${\bf w}{(1,1,1,1,1)}\in X_{\rm alg}^5$ in the following way:
    \[
    {\bf w}{(1,1,1,1,1)} =[1: \sqrt{\alpha_\infty} : \sqrt{-1 -\alpha_\infty}:  \sqrt{-\alpha_3 -\alpha_\infty}: \sqrt{-\alpha_4 -\alpha_\infty}]=[1:   \zeta^2\sqrt{\phi} :  \zeta^9 :   i\zeta^{6} :  \zeta^8\sqrt{\phi}].
    \]
     We can determine the locations of all vertices in $X_{\rm alg}^5$  using Lemma~\ref{lem:adj}. For example,
    \begin{align*}
    {\bf b}{(1,1,1,0,1)} &=[1: - \zeta^8\sqrt{\phi}  :  \zeta :  i \zeta^9 : - \zeta^2 \sqrt{\phi}  ],\\
    {\bf w}{(1,0,1,0,1)} &=[1:  \zeta^2\sqrt{\phi}  : - \zeta^9 :   i \zeta :  \zeta^8 \sqrt{\phi}  ],\\
    {\bf b}{(0,0,0,0,0)} &= [1:  \zeta^8\sqrt{\phi}  :  \zeta :  -i \zeta^9 :  \zeta^2 \sqrt{\phi}  ]. \quad
    \end{align*}

    Next, consider the $(1,2)$-colored 4-cycle containing the vertex $(1,1,1,1,1)$. Let $\mathbf{v} = \mathbf{v}(1,1,1,1,1)$. Table~\ref{tab:face_signs} tells us that for $\mathbf{f} = \mathbf{f}_{1,2}(1,1,1,1,1)$, the sign of the imaginary parts of $\mathbf{f}_k$ and $\mathbf{v}_k$ must be the same for $k = 2, 4, 5$. Thus, 
    \[
    \mathbf{f}   = [1 :i : 0 : - i\sqrt{\phi-1}:  -i\sqrt{\phi }]\in \pi^{-1}(-1)
    \]
\end{example}

\begin{remark}\label{rem:gMD}
Note that the discrete Morse divisors defined in Definition~\ref{def:dMD} can be viewed naturally on $X^N_{\rm alg}$, as formal sums of the corresponding points in $X^N_{\rm alg}$. Such divisors on $X^N_{\rm alg}$ will be called {\it geometric divisors} below.
\end{remark}

We now turn our attention to the Jacobian of $X^5_\mathrm{alg}$. Define involutions $a_1, \dots, a_5: \mathbb{P}^4 \to \mathbb{P}^4$ by
\begin{equation}(a_j(x))_k = \begin{cases}
    x_k & \text{if }k\neq j\\
    -x_k & \text{if }k = j
\end{cases}.\end{equation}
We have induced group actions on $X^5_\mathrm{alg}$ via Equation~\ref{eq:completeint}.
It was shown in \cite[\S 5.1]{CHQ} that the Jacobian $\Jac(X^5_\mathrm{alg})$ is isogenous to the product of the Jacobians of orbifolds $X^5_\mathrm{alg}/H$, where $H$ runs through the following groups:
\begin{gather}
H_1 = \langle a_1, a_2a_3, a_2a_4\rangle, \quad H_2 = \langle a_2, a_1a_3, a_1a_4\rangle, \quad
H_3 = \langle a_3, a_1a_2, a_2a_4\rangle, \\ 
H_4 = \langle a_4, a_2a_3, a_1a_2\rangle, \quad H_5 = \langle a_5, a_2a_3, a_2a_4\rangle.\notag
\end{gather}

Each of the resulting factors is an elliptic curve, yielding an isogenous decomposition
$\Jac(X_\mathrm{alg}^5) \equiv E_1 \times \dots \times E_5$, where the $E_k$ are given by the following equations. (We have followed the numbering of \cite[\S 5.1]{CHQ}.)
\begin{align}\label{eq:ECs}
\begin{split}
    E_1,E_4: y^2 &= x(x-1)(x-(\phi+1))\\
    E_2, E_5: y^2 &= x(x-1)(x-\phi)\\
    E_3: y^2 &= x(x-1)(x+\phi).
\end{split}
\end{align}

As noted in \cite[\S 5.2]{CHQ}, the elliptic curves $E_1, \dots, E_5$ have the same $j$-invariant (namely, 2048), and are thus isomorphic over $\mathbb{C}$. One may check that $E_1$, $E_2$, and $E_3$ are mutually non-isomorphic over $\mathbb{Q}(\phi)=\mathbb{Q}(\sqrt{5})$; in the \cite{LMFDB} database's list of curves over $\mathbb{Q}[\sqrt{5}]$, their isomorphism classes are labeled 256.1-c3, 256.1-b3, and 64.1-a3, respectively.

\subsection{Vertex and face center images}

We are now ready to characterize the image of $H^5$ in the elliptic curves $E_1, \dots, E_5$. We will prove the following theorem:

\begin{theorem}\label{thm:imagesinEC}
 Fix a rainbow for the $N=5$ hypercube Adinkra $H^5$.      The images of the face centers and vertices in $H^5$ in the Mordell-Weil group $MW(E_k)$ over $K_k$ lie in a group isomorphic to $\mathbb{Z} \times \mathbb{Z}/(4) \times \mathbb{Z}/(2)$.

\begin{enumerate}
    \item[A.] The projection of the image of a black or white vertex $u$ on $\mathbb{Z}$ is a generator of $\mathbb{Z}$. The signs of this projection are determined by a total-color-splitting  $S_v(u)$ of $H^5$ with $v = (1,1,1,1,1)$, with the sign of the projection associated to $MW(E_k)$ given by a $k$-color-splitting for each $k$. 
     \item[B.] The torsion subgroup is generated by the images of face centers; face centers corresponding to one pair of adjacent edge colors in the rainbow are mapped to the identity of $E_k$, face centers corresponding to another pair of adjacent edge colors in the rainbow are mapped to points of order 4 in $MW(E_k)$, and face centers corresponding to the remaining pairs of adjacent edge colors in the rainbow are mapped to points of order 2 in $MW(E_k)$.
\end{enumerate}
\end{theorem}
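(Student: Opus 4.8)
The plan is to make the quotient morphisms $\pi_k\colon X_{\rm alg}^5\to E_k$ completely explicit and then evaluate them on the $2^5$ vertex points of Lemma~\ref{lem:adj} and the $40$ face-center points of Equation~\eqref{eq:faceCenter}. Following \cite[\S5.1]{CHQ} one has, for each $k$, an explicit description of $\pi_k=(X_k,Y_k)$ into the Weierstrass model \eqref{eq:ECs}, with $X_k$ and $Y_k$ ratios of monomials in $x_1,\dots,x_5$ invariant under $H_k$; we take the origin of $E_k$ to be the point at infinity of this model, so that the image of a point $P$ in $MW(E_k)$ is just the class of $\pi_k(P)$. The structural fact we exploit throughout is that $\pi$ factors as $X_{\rm alg}^5\xrightarrow{\ \pi_k\ }E_k\xrightarrow{\ 2{:}1\ }\mathbb{P}^1$, where $\mathbb{P}^1=X_{\rm alg}^5/\langle a_1,\dots,a_5\rangle$ is the target of $\pi\colon[x_1:\cdots:x_5]\mapsto[x_1^2:x_2^2]$ (indeed $H_k$ has index $2$ in the order-$16$ group generated by the $a_j$ in $\mathrm{PGL}_5$). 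The Weierstrass $x$-coordinate $X_k$ is then a M\"obius transform of the $\mathbb{P}^1$-coordinate, pinned down by matching the branch loci, and the hyperelliptic involution $Y_k\mapsto -Y_k$ of $E_k$ is the deck transformation of $E_k\to\mathbb{P}^1$, represented upstairs by the nontrivial coset of $H_k$ in $\langle a_1,\dots,a_5\rangle$. Granting this, every image is computed by substituting the coordinates of Lemma~\ref{lem:adj} and Equation~\eqref{eq:faceCenter} and reading off a sign; one checks along the way that all coordinates lie in the field $K_k$ of Proposition~\ref{prop:field_choice}, so the images really are in $MW(E_k)$.

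For part~B, observe that the five face-center positions $-\alpha_1,\dots,-\alpha_5$ (that is, $0,-1,-\phi,-(\phi+1),\infty$ in the $\mathbb{P}^1$-coordinate) are exactly the five branch points of $\pi$, each with eight preimages of ramification index $2$. For each $k$, a short count of how $H_k$ permutes the eight points of each degenerate $\pi$-fiber shows that over exactly four of the five positions $H_k$ acts transitively -- so $E_k\to\mathbb{P}^1$ ramifies there and $\pi_k$ sends all eight corresponding face centers to a single $2$-torsion point of $E_k$ -- while over the fifth position $H_k$ has two orbits, and the eight face centers there map to the two points $\pm T_k$ lying over it. The four ramified fibers of $E_k\to\mathbb{P}^1$ account for the four $2$-torsion points $O,P_1,P_2,P_3$: one adjacent color pair maps to $O$ (the branch point which is the point at infinity of the Weierstrass model) and three map to $P_1,P_2,P_3$. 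For the remaining color pair one substitutes the explicit $X_k$-value of its fiber into \eqref{eq:ECs} and checks by the duplication formula that $2T_k$ is a root of the cubic -- a one-line identity over $\mathbb{Q}(\sqrt5)$, e.g.\ of the shape $y^2=\phi(\phi-1)(-1)=-1$ with $2T=(0,0)$ -- so $T_k$ has order exactly $4$. Writing $2T_k=P_3$ and $S_k:=P_1$, the group generated by all face-center images is $\langle T_k\rangle+\langle S_k\rangle\cong\mathbb{Z}/(4)\times\mathbb{Z}/(2)$, which proves~B and determines the color-pair/order correspondence.

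For part~A, the $2^4$ black vertices and $2^4$ white vertices lie over the two non-branch $\pi$-values $\alpha_0=\eta(0)$ and $\alpha_\infty=\eta(\infty)$, so $\pi_k$ sends the black vertices to two points $\pm B_k$ and the white vertices to two points $\pm W_k$; since $\pi$ is unramified over $\alpha_0$ the black-vertex fiber is a torsor under $\langle a_1,\dots,a_5\rangle$, so each of $B_k,-B_k$ is hit by exactly eight black vertices, namely the two cosets of $H_k$. One then shows -- using the explicit coordinates, by reduction modulo suitable primes of $K_k$ -- that $B_k$ has infinite order, and that $W_k$ equals $\pm B_k$ plus a torsion point, so that the vertex and face-center images together generate exactly $\mathbb{Z}\times\mathbb{Z}/(4)\times\mathbb{Z}/(2)$ and the projection to the $\mathbb{Z}$-factor of any vertex image is $\pm$ a generator. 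It remains to match the sign to the combinatorics. Since $B_k$ and $-B_k$ are interchanged by the deck transformation of $E_k\to\mathbb{P}^1$, two black (resp.\ white) vertices have the same $\pi_k$-image precisely when they lie in the same $H_k$-coset, i.e.\ when a specified $H_k$-invariant sign-monomial in their coordinates agrees; tracing this monomial along edges via Table~\ref{tab:adj} and using that all black (resp.\ white) vertices have a fixed Hamming-weight parity, one finds that the induced partition is precisely the one cut out by the $k$-color-splitting $S_{k,v}$ with $v=(1,1,1,1,1)$ (compare the homomorphism $s_k(x)=x_k+x_{k+2}$ from the proof of Proposition~\ref{prop:hypercube_color_splittings}). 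Assembling these partitions over $k=1,\dots,5$ gives the total-color-splitting $S_v$, completing~A.

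The main obstacle is the bookkeeping in part~A: carefully identifying, for each $k$, which $H_k$-coset of vertices maps to $B_k$ versus $-B_k$ and matching that partition against the $k$-color-splitting -- a computation complicated by the non-canonical choice of base point (Remark~\ref{rem:choice}) and by the complex conjugation appearing in the adjacency formulas of Table~\ref{tab:adj} -- together with producing a clean proof that $B_k$ is non-torsion. Pinning down the M\"obius normalization in the first step so that the identity element and the order-$4$ point land over the asserted color pairs is a subsidiary but essential point.
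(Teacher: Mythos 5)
Your proposal is correct in strategy and arrives at the same conclusions, but it organizes the argument differently from the paper, most notably in part~B. The paper writes down the five explicit maps $\nu_k$ of Equation~\ref{eq:maps}, tabulates the images of all vertices and face centers (Tables~\ref{tab:vertex_images} and~\ref{tab:face_images}), verifies the orders with SageMath, and proves the sign rule of part~A by an explicit matrix computation on the $y$-coordinates (Lemma~\ref{lem:color_split}). You instead exploit the factorization $X_{\rm alg}^5 \to E_k \to \mathbb{P}^1$, with $E_k \to \mathbb{P}^1$ of degree $2$ because $H_k$ has index $2$ in the order-$16$ sign group, and determine the face-center images by counting $H_k$-orbits on the eight-point fibers over the five branch points: four fibers collapse to the four $2$-torsion points (one of which is $O_k$) and one splits into a pair $\pm T_k$, with the order-$4$ claim for $T_k$ reduced to a duplication-formula identity. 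This is forced by Riemann--Hurwitz for $X_{\rm alg}^5 \to E_k$ together with the orbit count, and it \emph{explains} the $0$ / order-$4$ / order-$2$ trichotomy of part~B that the paper only records from its tables; the cost is that you still need the explicit Weierstrass normalization to say which color pair lands on $O_k$, and explicit coordinates (or reduction modulo primes of $K_k$) to get the infinite order of $B_k$ and the relation $W_k^+ + B_k^+ = F_{(k-1,k)}$, without which the group would not be pinned down as $\mathbb{Z}\times\mathbb{Z}/(4)\times\mathbb{Z}/(2)$. For part~A your identification of the $\pm$ partition with the two $H_k$-cosets of the $16$-point fiber, matched against the homomorphism $s_k(x)=x_k+x_{k+2}$, is essentially the same computation the paper performs in Lemma~\ref{lem:color_split}; you correctly flag this coset-versus-color-splitting bookkeeping, together with the base-point ambiguity of Remark~\ref{rem:choice}, as the step that still has to be carried out in full, and nothing in your outline would fail there.
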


We begin by giving a map between $X_{\rm alg}^5$ and ${\rm Jac}(X_{\rm alg}^5) $, and then we use this map to describe the images of points in $H^5$ as points on elliptic curves over the complex numbers. Image points in $MW(E_k)$ of order greater than 2 are equipped with a choice of sign; we describe the relationship between these signs and adjacency in $H^5$ in Lemma~\ref{lem:color_split}. Appropriate fields of definition $K_k$ for the relevant elliptic curves are presented in Lemma~\ref{prop:field_choice}. 

Once these preliminaries are established, we prove Parts A and B of the above theorem by examining the group structure on the five elliptic curves $E_1, \ldots, E_5$. Our analysis yields a complete description of the map to $\mathbb{Z} \times \mathbb{Z}/(4) \times \mathbb{Z}/(2)$. In particular, we summarize the relationship between vertex or face center images and the abelian group $\mathbb{Z} \times \mathbb{Z}/(4) \times \mathbb{Z}/(2)$ in Table~\ref{tab:torsion_group}. Details of these computations can also be explored using the authors' SageMath code (see \cite{ourcode}).

Let us now consider the map $\nu= (\nu_1, \ldots, \nu_k) \colon X_\mathrm{alg}^5 \to E_k$ defined by 

\begin{gather}\label{eq:maps}
    \nu_k([x_1: \ldots: x_5]) = 
    \left\{ 
    \begin{array}{ll}
    \displaystyle E_1\Big(-(2\phi+1) \frac{x_1^2}{x_5^2} - \phi,\ (2\phi+1) \frac{x_2x_3x_4}{x_5^3}\Big) & 
    k = 1\\[2mm]
    \displaystyle E_2\Big( \frac{x_2^2}{x_1^2} +\phi+1,\ i \frac{x_3x_4x_5}{x_1^3}\Big) & 
    k = 2\\[2mm]
     \displaystyle E_3\Big((2\phi+1) \frac{x_3^2}{x_2^2} + \phi+1,\ i(2\phi+1) \frac{x_1x_4x_5}{x_2^3}\Big) & 
    k = 3\\[2mm]
     \displaystyle E_4\Big( (\phi+1)\frac{x_4^2}{x_3^2} - \phi,\ i\phi \frac{x_1x_2x_5}{x_3^3}\Big) & 
    k = 4\\[2mm]
     \displaystyle E_5\Big( -\frac{x_5^2}{x_4^2} + \phi+1,\ i \frac{x_1x_2x_3}{x_4^3}\Big) & 
    k = 5\\
    \end{array}
    \right. 
\end{gather}
Here, we use the notation $E_k(x,y)$ to designate the point $[x:y:1]$ on the curve $E_k$.  It is straightforward to verify that $\nu$ satisfies the conditions discussed following Remark~\ref{rem:gMD}, and thus gives the isogenous decomposition of $\Jac(X_\mathrm{alg}^5)$. All such maps  give isogenous images, so we are free to use $\nu$ to proceed. 
Geometric divisors are formal sums of the images of vertices and face centers, so it will be useful to have an explicit description of the images of the vertices and face centers in each of the elliptic curve components of the Jacobian. This is given in Tables \ref{tab:vertex_images} and \ref{tab:face_images}.

The images of the (black and white) vertices can be computed directly from the maps in Equation~\ref{eq:maps}. For a given map $\nu_k$, Table~\ref{tab:vertex_images} describes the elliptic curve points $B_k^\pm$ and $W_k^\pm$, where $W_k^+$ is taken to be the image of ${\bf w}(1,1,1,1,1)$ under $\nu_k$ and $B_k^+$ the image of ${\bf b}(0,0,0,0,0)$. 

\begin{table}[H]
    \begin{tabular}{|l|ll|}
    \hline
      $\underset{}{W_k^+}$ & $\overset{}{B_k^+}$ &\\\hline\hline
     $E_1(\zeta^8\phi,\  -i \zeta^8(\phi+1)) $&$\underset{}{E_1(\zeta^2\phi,\  -i \zeta^2(\phi+1))}$& $(=-\overline{W}_1^+)$  \\\hline
     $\underset{}{E_2(\zeta^9\phi,\  \zeta^2\sqrt{\phi})}$ & 
     $E_2(\zeta\phi,\  \zeta^8\sqrt{\phi})$ & $(=\overline{W}_2^+)$ \\\hline 
     $E_3(\zeta^2\phi,- \zeta^8(\phi+1)$ & $\underset{}{E_3( \zeta^8\phi,\  -\zeta^2(\phi+1))}$ & $(=\overline{W}_3^+)$ \\\hline
    $E_4(\zeta^8\phi,-i\zeta^8(\phi + 1))$
    &$\underset{}{E_4(\zeta^2\phi,\  -i\zeta^2(\phi + 1))}$ & $(=-\overline{W}_4^+)$ \\\hline
     $ E_5( \zeta\phi,  \zeta^8 \sqrt{\phi})$ &$ \underset{}{E_5( \zeta^9\phi,\  \zeta^2 \sqrt{\phi})}$& $(=\overline{W}_5^+)$\\\hline
    \end{tabular}
    \vspace*{.3cm}
    \caption{Vertex images on the elliptic curves using the maps in Equation \ref{eq:maps}.  For each $k$, $W_k^+$ is  $\nu_k([1: \zeta^2\sqrt{\phi}  :  \zeta^9 :   i \zeta : \ \zeta^8 \sqrt{\phi}  ])$ and $B_k^+$ is  $\nu_k([1: \zeta^2\sqrt{\phi}  :  \zeta^9 :   i \zeta^6 : \ \zeta^8 \sqrt{\phi}  ])$
    } 
    \label{tab:vertex_images}
\end{table}

We next turn our attention to face centers. For each pair of adjacent edge colors in the rainbow, there are 8 distinct face centers in $X^5_\mathrm{alg}$. However, under the maps described in Equation~\ref{eq:maps}, face centers corresponding to a single pair of edge colors are sent to either one or two points in each elliptic curve. We label these image points as $F_{(j,j+1)}$, using arithmetic $\pmod{5}$ on the indices to reflect the cyclic ordering on the rainbow. When we wish to distinguish points that arise as images of face centers corresponding to a single pair of edge colors, we use the notation $F_{(j,j+1)}^+$ and $F_{(j,j+1)}^-$, and we use $F_{(j,j+1)}^\pm$ to refer to either element of such a pair. 
Face centers corresponding to the pairs of colors $(k-1,k)$, $(k,k+1)$, and $(k+1,k+2)$  are sent to $E_k(0,0)$, $E_k(1,0)$, and $E_k(r,0)$, where $r$ depends on $k$ (c.f. Equation~\ref{eq:ECs}).  Face centers corresponding to colors $(k+2,k-2)$ are sent to the identity element $O_k$ in $E_k$, which we may think of as the point at infinity.  The face centers corresponding to the  edge colors $(k-2,k-1)$ are sent to two points whose $y$-coordinates are additive inverses of each other.   We give the coordinates of the face center images in Table~\ref{tab:face_images}. 

\begin{table}[H]
 \begin{tabular}{|c|l|l|l|l|l|}
    \hline
     & $E_1$& $E_2$& $E_3$& $E_4$& $E_5$\\\hline\hline
    $ F_{(5,1)}$ & $\overset{}{E_1(0,0)}$& $E_2(\phi+1, \pm \sqrt{\phi^3})$ &$O_3$&$E_4(\phi+1,0)$&$E_5(1,0)$\\\hline
    $F_{(1,2)}$ &$\overset{}{E_1(1,0)}$ &$E_2(\phi,0)$&$E_3(\phi+1,\pm \phi^3)$ &$O_4$& $E_5(0,0)$\\\hline
    $F_{(2,3)}$ &$\overset{}{E_1(\phi+1,0)}$ &$E_2(1,0)$&$E_3(1,0)$&$E_4(-\phi,\pm i\phi^3)$&$O_5$\\\hline
    $F_{(3,4)}$ & $O_1$ &$\overset{}{E_2(0,0)}$&$E_3(0,0)$&$E_4(0,0)$&$E_5(\phi+1,\pm
     \sqrt{\phi^3} 
    )$\\\hline
    $F_{(4,5)}$ &$\overset{}{E_1(-\phi,\pm i\phi^3)}$ &$O_2$&$E_3(-\phi,0)$&$E_4(1,0)$&$E_5(\phi,0)$\\\hline
    \end{tabular}
    \vspace*{2mm}
    
    \caption{Coordinates of face center images}\label{tab:face_images}
\end{table} 
Note that the images of points described above are not canonical. That is, a different choice of maps than that given in Equation~\ref{eq:maps} would permute the images of $F_{k-1,k}$, $F_{k,k+1}$, $F_{k+1,k+2}$, and $F_{k+2,k-2}$, and exchange the points $F_{k-2,k-1}^\pm$  and the points $W_k^\pm$, $B_k^\pm$ for  different but similarly related collection of points on $E_k$. However, the resulting Mordell-Weil group would be isomorphic to the one we describe in this section.

In our discussion of the images of points, we have been working over the complex numbers. To study the elliptic curves via a computer algebra system, it is convenient to work over a number field. Consulting Tables \ref{tab:vertex_images} and \ref{tab:face_images}, we obtain the following description of the appropriate field.

\begin{proposition}\label{prop:field_choice}
For $k=1, 3, 4$, the images in $E_k$ of the vertices and face centers of the $N=5$ Adinkra embedded in $X^5_\mathrm{alg}$ are defined over $\mathbb{Q}(\zeta, i)$, a degree 8 extension of $\mathbb{Q}$; for $k=2,5$ they are defined over $\mathbb{Q}(\zeta, i, \sqrt{\phi})$, a degree 16 extension of $\mathbb{Q}$. (Here $\zeta = e^{\frac{2\pi i}{10}}$.)
		\end{proposition}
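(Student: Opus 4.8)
The plan is to read the containment claims straight off the explicit coordinates produced by the maps $\nu_k$ of Equation~\eqref{eq:maps}, and then to compute the degrees of the resulting number fields using standard facts about cyclotomic and quartic fields. I would open with the preliminary observation that $\phi\in\mathbb{Q}(\zeta)$: the field $\mathbb{Q}(\zeta)$ with $\zeta=e^{2\pi i/10}$ coincides with the cyclotomic field $\mathbb{Q}(\zeta_5)$ of degree $4$ (for instance $\zeta^2$ is a primitive fifth root of unity and $\zeta=-(\zeta^2)^3$), and the unique quadratic subfield of $\mathbb{Q}(\zeta_5)$ is $\mathbb{Q}(\sqrt5)$; hence $\sqrt5\in\mathbb{Q}(\zeta)$ and $\phi=(1+\sqrt5)/2\in\mathbb{Q}(\zeta)$. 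In particular every coefficient appearing in the maps $\nu_1,\dots,\nu_5$ lies in $\mathbb{Q}(\zeta,i)$.

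Next I would bound the field generated by \emph{all} image points, not just the representatives tabulated. By Lemma~\ref{lem:adj} every black or white vertex of $X_{\rm alg}^5$ has homogeneous coordinates equal to $\pm1$ times one of $1$, $\zeta^a\sqrt\phi$, $\zeta^b$, or $i\zeta^c$; by Equation~\eqref{eq:faceCenter} every face center has coordinates equal to $\pm1$ times one of $0$, $1$, $i$, $\sqrt\phi$, $i\sqrt\phi$, $i\sqrt{\phi+1}=i\phi$, or $\sqrt{\phi-1}=\sqrt\phi/\phi$ (using $\phi^2=\phi+1$). So every vertex and face center of $X_{\rm alg}^5$ is defined over $\mathbb{Q}(\zeta,i,\sqrt\phi)$, and since each $\nu_k$ is a rational map with coefficients in $\mathbb{Q}(\zeta,i)$, every image point $\nu_k(P)$ lies in $E_k(\mathbb{Q}(\zeta,i,\sqrt\phi))$. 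This already settles the cases $k=2,5$, and it is consistent with the entries of Tables~\ref{tab:vertex_images} and~\ref{tab:face_images} (e.g. the coordinates $\zeta^2\sqrt\phi$ and $\pm\sqrt{\phi^3}=\pm\phi\sqrt\phi$ occurring there).

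To sharpen the bound for $k\in\{1,3,4\}$ I would track the factors of $\sqrt\phi$. The $x$-coordinate of $\nu_k(P)$ is, up to a $\mathbb{Q}(\zeta)$-scalar, a ratio $x_a^2/x_b^2$ of two squared projective coordinates, so any $\sqrt\phi$ occurs there to even order and the $x$-coordinate lies in $\mathbb{Q}(\zeta)$. The $y$-coordinate is a $\mathbb{Q}(\zeta,i)$-scalar times a product $x_a x_b x_c/x_d^{\,3}$; by Lemma~\ref{lem:adj} precisely the second and fifth projective coordinates of a vertex carry a factor of $\sqrt\phi$, so a short parity count gives net $\sqrt\phi$-exponents $1-3$, $1-3$, and $1+1$ for $k=1,3,4$ respectively (all even), versus $1$ for $k=2,5$ (odd); similarly the two factors of $i$ cancel for $k=3$ but not for $k=1,4$. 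Face centers, which each have one projective coordinate equal to $0$, are handled the same way, or one simply reads off the finite list in Table~\ref{tab:face_images}. Thus for $k=1,3,4$ all images lie in $\mathbb{Q}(\zeta,i)$. (Alternatively, once Theorem~\ref{thm:imagesinEC} is available the reduction is immediate: every image lies in the subgroup of $MW(E_k)$ generated by the tabulated points, and the group law on $E_k$ is defined over the field of definition of those finitely many points.)

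Finally I would compute degrees. Since $\zeta$ is a primitive tenth root of unity, $[\mathbb{Q}(\zeta):\mathbb{Q}]=\varphi(10)=4$; adjoining $i$ yields $\mathbb{Q}(\zeta,i)=\mathbb{Q}(\zeta_{20})$ because $\mathrm{lcm}(10,4)=20$, so $[\mathbb{Q}(\zeta,i):\mathbb{Q}]=\varphi(20)=8$. For the largest field I would argue that $\mathbb{Q}(\zeta_{20})$ is abelian over $\mathbb{Q}$, whereas $\mathbb{Q}(\sqrt\phi)$ is not even Galois over $\mathbb{Q}$: its minimal polynomial $x^4-x^2-1$ has the two real roots $\pm\sqrt\phi$ together with the non-real roots $\pm i\sqrt{\phi-1}=\pm i/\sqrt\phi$, while $\mathbb{Q}(\sqrt\phi)\subset\mathbb{R}$. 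Hence $\sqrt\phi\notin\mathbb{Q}(\zeta,i)$, the polynomial $x^2-\phi$ is irreducible over $\mathbb{Q}(\zeta,i)$, and $[\mathbb{Q}(\zeta,i,\sqrt\phi):\mathbb{Q}]=16$. The only genuine subtlety is the middle step: making sure the parity count (or the appeal to Theorem~\ref{thm:imagesinEC}) really accounts for all $32$ vertex images and all $40$ face-center images, and not merely the handful of representatives displayed in the tables.
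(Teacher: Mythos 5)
Your proposal is correct and takes essentially the same route as the paper, whose entire proof is the single sentence ``Consulting Tables~\ref{tab:vertex_images} and~\ref{tab:face_images}, we obtain the following description of the appropriate field'' --- i.e.\ reading the fields of definition off the explicit coordinates of the image points. You additionally supply the details the paper leaves implicit: the $\sqrt{\phi}$-parity count on the formulas in Equation~\ref{eq:maps} showing that \emph{all} $32$ vertex images and $40$ face-center images (not just the tabulated representatives, whose coordinates differ only by signs and conjugation) land in the claimed fields, and the degree computations $[\mathbb{Q}(\zeta,i):\mathbb{Q}]=\varphi(20)=8$ and $[\mathbb{Q}(\zeta,i,\sqrt{\phi}):\mathbb{Q}]=16$ via the non-normality of $\mathbb{Q}(\sqrt{\phi})/\mathbb{Q}$, both of which check out.
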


\noindent (We will consider the question of fields of definition further in \S~\ref{SS:fieldsOfDef}.)

We are now ready to study the images of the $N=5$ Adinkra vertices and face centers that appear in {\it Jacobian divisors} on each elliptic curve. We begin with the images of the black and white vertices in each elliptic curve. 

 Using \cite{sage} and the coordinates given in Table~\ref{tab:vertex_images}, one may check that the points $B_k^+$, $B_k^-$, $W_k^+$, and $W_k^-$ have infinite order for every elliptic curve $E_k$. Because reflection across the $x$-axis corresponds to negation in the elliptic curve group law, we have the relations
$B_k^+ + B_k^- = O_k$ and $W_k^+ + W_k^- = O_k$. In each case, $B_k^+ + W_k^+$ is the image of a face center and has order 2. Specifically, we have the 
relation 
\begin{equation}
    B_k^+ + W_k^+ = F_{(k-1,k)}.
\end{equation}
We summarize this description of the relationship between images of black and white vertices in the following lemma.

\begin{lemma}\label{lem:vertices}
The images of the black and white vertices of $H^5$ in $E_k$ all have infinite order. Together, they generate an abelian group isomorphic to $\mathbb{Z} \times \mathbb{Z}/(2)$. As generators of this group, we may take either the images of a single black and a single white vertex, or the image of a vertex of either color together with the face center image $F_{(k-1,k)}$. 
 \end{lemma}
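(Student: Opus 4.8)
The plan is to reduce the lemma to a short computation in the group of a single elliptic curve, using the explicit data of Table~\ref{tab:vertex_images} and the relations recorded just above the statement. The first step is to observe that the $\nu_k$-image of \emph{every} black vertex of $H^5$ lies in $\{B_k^+,B_k^-\}$ and that of every white vertex in $\{W_k^+,W_k^-\}$. One way to see this is that $\nu_k$ is invariant under the order-$8$ group $H_k$ (this is part of what it means for $\nu$ to induce the isogenous decomposition of $\Jac(X_{\rm alg}^5)$), while the group $\langle a_1,\dots,a_5\rangle$ acts through a group of order $16$ that permutes the $16$ vertices of each colour simply transitively; since the image of $H_k$ has index $2$ in that group, each colour class breaks into exactly two $H_k$-orbits, hence into two $\nu_k$-fibres. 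Alternatively, by Lemma~\ref{lem:adj} and Table~\ref{tab:adj}, travelling along an edge of $H^5$ conjugates the coordinates and changes some of their signs, so by Equation~\ref{eq:maps} it alters the resulting point of $E_k$ by (the real structure of $E_k$ composed with) a sign change; since vertices of the same colour are joined by paths of even length, their images differ from $B_k^+$ (resp. $W_k^+$) only by negation. In particular every vertex image has infinite order, as $B_k^\pm$ and $W_k^\pm$ do.

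It follows that the group generated by all vertex images is $\Gamma:=\langle B_k^+,B_k^-,W_k^+,W_k^-\rangle$, and since reflection across the $x$-axis is negation we have $B_k^-=-B_k^+$ and $W_k^-=-W_k^+$, so $\Gamma=\langle B_k^+,W_k^+\rangle$. Now invoke the relation $B_k^+ +W_k^+ = F_{(k-1,k)}$; the point $F_{(k-1,k)}$ lies on the $x$-axis and is not $O_k$ (Table~\ref{tab:face_images}), so it has order exactly $2$. Rewriting $B_k^+ = F_{(k-1,k)} - W_k^+$ gives $\Gamma = \langle W_k^+\rangle + \langle F_{(k-1,k)}\rangle$. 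The subgroup $\langle W_k^+\rangle$ is infinite cyclic, hence torsion-free, so it cannot contain the order-$2$ element $F_{(k-1,k)}$; therefore $\langle W_k^+\rangle\cap\langle F_{(k-1,k)}\rangle=\{O_k\}$ and the sum is direct, $\Gamma\cong\mathbb{Z}\times\mathbb{Z}/(2)$.

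For the generating sets: $\{B_k^+,W_k^+\}$ is by construction the image of a single black vertex together with a single white vertex, and it generates $\Gamma$; any other pair consisting of one black- and one white-vertex image is $\{\pm B_k^+,\pm W_k^+\}$, and since $\Gamma$ is closed under negation it too generates $\Gamma$. Likewise $\{W_k^+,F_{(k-1,k)}\}$ and $\{B_k^+,F_{(k-1,k)}\}$ generate $\Gamma$, because $B_k^+=F_{(k-1,k)}-W_k^+$ and $W_k^+=F_{(k-1,k)}-B_k^+$, respectively, and then $B_k^-,W_k^-$ are obtained by negation.

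The only step that is not pure bookkeeping is the claim that $B_k^\pm$ and $W_k^\pm$ have infinite order on each $E_k$ over the field $K_k$ of Proposition~\ref{prop:field_choice}; this is precisely what produces the $\mathbb{Z}$-factor and rules out $\Gamma$ being finite, and it is where a Mordell--Weil or canonical-height computation (carried out in \cite{sage}) is indispensable. Granting that input, as the preceding discussion does, the remainder is a routine exercise about finitely generated abelian groups.
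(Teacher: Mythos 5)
Your proposal is correct and follows essentially the same route as the paper: the infinite order of $B_k^\pm, W_k^\pm$ is taken as a computational input (SageMath), reflection across the $x$-axis gives $B_k^- = -B_k^+$ and $W_k^- = -W_k^+$, and the relation $B_k^+ + W_k^+ = F_{(k-1,k)}$ with $F_{(k-1,k)}$ of order $2$ yields the splitting $\mathbb{Z}\times\mathbb{Z}/(2)$. The only additions are your explicit justification (via $H_k$-invariance or Table~\ref{tab:adj}) that every vertex image lands in $\{B_k^\pm, W_k^\pm\}$ and the torsion-free intersection argument for the direct sum, both of which the paper leaves implicit.
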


Next, we consider the images of face centers in the elliptic curve group; as we will see, these correspond to torsion points. In each elliptic curve, the face centers corresponding to one pair of adjacent edge colors are sent to the identity element for the group law (in other words, the point at infinity). We have seen that face centers corresponding to colors $(k-2,k-1)$ have two possible images in $E_{k}$. Using SageMath (\cite{sage}), one may check that each of these images has order 4, and that the remaining pairs of adjacent colors correspond to points of order 2. A more detailed study of the group law yields the following lemma. 

\begin{lemma}\label{lem:face_centers}
The images of face centers in the elliptic curve $E_k$ generate a group isomorphic to $\mathbb{Z}/(4) \times \mathbb{Z}/(2)$. A specific isomorphism may be chosen as follows. Let $\ee{k}{4}$ be a generator of $\mathbb{Z}/(4)$, let $\ee{k}{2}$ be a generator of $\mathbb{Z}/(2)$, and denote the identity by 0. In each curve, one pair of adjacent edge colors corresponds to 0, one pair of adjacent edge colors corresponds to $\pm \ee{k}{4}$, one pair of adjacent edge colors corresponds to $2 \ee{k}{4}$, and the remaining two pairs of adjacent edge colors may be assigned to $\ee{k}{2}$ and $2\ee{k}{4}+\ee{k}{2}$ in either order. Specifically, the pair of edge colors corresponding to 0 in $E_k$ is $(k-3,k-2)$, the pair of edge colors corresponding to $\pm \ee{k}{4}$ in $E_k$ is $(k-2,k-1)$, and the pair of edge colors corresponding to $2 \ee{k}{4}$ in $E_k$ is $(k-1,k)$, where we use arithmetic $\pmod{5}$.
\end{lemma}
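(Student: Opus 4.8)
The plan is to read the face-center images directly off Table~\ref{tab:face_images} and the paragraph preceding it, and then pin down the group structure with one explicit doubling computation on each of the five curves. For a fixed $k$, the five pairs of adjacent rainbow colors fall into three types: the pair $(k-3,k-2)=(k+2,k-2)$ maps to the identity $O_k$; the pair $(k-2,k-1)$ maps to a two-element set $\{F_{(k-2,k-1)}^+, F_{(k-2,k-1)}^-\}$ with opposite $y$-coordinates; and the pairs $(k-1,k)$, $(k,k+1)$, $(k+1,k+2)$ map to the three nonzero $2$-torsion points $E_k(0,0)$, $E_k(1,0)$, $E_k(r,0)$ of $E_k$ (with $r$ the third root in Equation~\ref{eq:ECs}). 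Since the characteristic is $0$ and all three roots lie in the field of Proposition~\ref{prop:field_choice}, these three points together with $O_k$ form the full $2$-torsion subgroup $E_k[2]\cong(\Z/2)^2$. I would set $\ee{k}{4} := F_{(k-2,k-1)}^+$; this point is not $2$-torsion because its $y$-coordinate is nonzero (Table~\ref{tab:face_images}), and $-\ee{k}{4} = F_{(k-2,k-1)}^-$ since negation on $E_k$ is reflection across the $x$-axis.

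The key step is to compute $2\ee{k}{4}$. Using the coordinates of $F_{(k-2,k-1)}^+$ from Table~\ref{tab:face_images} and the doubling formula for the Weierstrass model of $E_k$ in Equation~\ref{eq:ECs}, one checks — for each of the five curves separately, since they have three distinct defining equations and coordinates involving $\phi$, $i$, and $\sqrt{\phi}$ — that $2\ee{k}{4}$ is exactly the image of the pair $(k-1,k)$, i.e.\ whichever of $E_k(0,0), E_k(1,0), E_k(r,0)$ that is. In particular $2\ee{k}{4}$ is a nonzero $2$-torsion point, so $\ee{k}{4}$ has order $4$, and $\langle \ee{k}{4}\rangle = \{O_k,\, \ee{k}{4},\, 2\ee{k}{4},\, -\ee{k}{4}\}\cong\Z/(4)$ already accounts for the face-center images of the three pairs $(k-3,k-2)$, $(k-2,k-1)$, $(k-1,k)$. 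This establishes the explicit identifications in the last sentence of the lemma.

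To finish, I would choose $\ee{k}{2}$ to be the image of one of the two remaining pairs, say $(k,k+1)$. It lies in $E_k[2]$ but not in $\langle \ee{k}{4}\rangle$ (the only $2$-torsion element there is $2\ee{k}{4}$, the image of $(k-1,k)$, which is a different point), so $\langle \ee{k}{4}, \ee{k}{2}\rangle\cong \Z/(4)\times\Z/(2)$ has order $8$. Since $E_k[2] = \{O_k,\, 2\ee{k}{4},\, \ee{k}{2},\, 2\ee{k}{4}+\ee{k}{2}\}$ exhausts all four $2$-torsion points, the image of the last pair $(k+1,k+2)$ — the third nonzero $2$-torsion point — must equal $2\ee{k}{4}+\ee{k}{2}$. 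Because $\ee{k}{2}$ could equally have been taken to be the image of $(k+1,k+2)$, the two pairs $(k,k+1)$ and $(k+1,k+2)$ are genuinely interchangeable between $\ee{k}{2}$ and $2\ee{k}{4}+\ee{k}{2}$, as asserted. All face-center images now lie in $\langle \ee{k}{4}, \ee{k}{2}\rangle$, and conversely this subgroup is generated by face-center images, so the group they generate is exactly $\Z/(4)\times\Z/(2)$.

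The main obstacle is the doubling computation $2F_{(k-2,k-1)}^+$: routine in principle, but it must be carried out carefully across all five curves, since the three models in Equation~\ref{eq:ECs} differ and the point coordinates are irrational. I would perform it in SageMath over the fields of Proposition~\ref{prop:field_choice} and cross-check a case or two by hand. A secondary point to confirm along the way is that the order-$4$ claim for $F_{(k-2,k-1)}^\pm$ is consistent with the earlier assertion that these are the only face-center images of order greater than $2$ — which is immediate once every other face-center image has been exhibited as $2$-torsion or the identity.
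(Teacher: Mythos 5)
Your proposal is correct and follows essentially the same route as the paper, which likewise reads the images off Table~\ref{tab:face_images}, verifies the order-$4$ and order-$2$ claims by explicit computation (via SageMath), and then deduces the $\mathbb{Z}/(4)\times\mathbb{Z}/(2)$ structure from the group law. Your identification of the single nontrivial computational input --- that doubling $F_{(k-2,k-1)}^{+}$ lands on the $2$-torsion point $F_{(k-1,k)}$ --- is exactly right (e.g.\ on $E_1$, doubling $(-\phi,\,i\phi^3)$ gives $x$-coordinate $\lambda^2+(\phi+2)+2\phi=0$, hence the point $(0,0)$), and the remaining group-theoretic bookkeeping is as in the paper.
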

 Combining Lemmas~\ref{lem:vertices} and~\ref{lem:face_centers}, we note:

 \begin{proposition}\label{prop:abelian_group}
The images of the vertices and face centers of $H^5$ in $E_k$ generate an abelian group isomorphic to $\mathbb{Z} \times \mathbb{Z}/(4) \times \mathbb{Z}/(2)$.
\end{proposition}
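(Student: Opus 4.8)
The plan is to combine Lemma~\ref{lem:vertices} and Lemma~\ref{lem:face_centers} directly, but the essential content is verifying that the two subgroups they describe interact correctly — namely, that the free part supplied by the vertices and the torsion part supplied by the face centers together fill out exactly $\mathbb{Z} \times \mathbb{Z}/(4) \times \mathbb{Z}/(2)$ with no collapse and no extra relations. First I would let $G_k \subseteq MW(E_k)$ denote the subgroup generated by the images of all vertices and face centers of $H^5$. By Lemma~\ref{lem:face_centers}, the subgroup $T_k$ generated by the face center images is isomorphic to $\mathbb{Z}/(4) \times \mathbb{Z}/(2)$; since every element of $T_k$ is torsion, and since the torsion subgroup of the Mordell--Weil group of an elliptic curve over a number field is finite with a well-understood structure, $T_k$ is exactly the full torsion subgroup of $G_k$ (any element of $G_k$ is a combination of a vertex image and face center images, and the vertex images have infinite order, so the only way to land in torsion is to use a relation of the form in Lemma~\ref{lem:vertices}).

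Next I would pick a black vertex image, say $B_k^+$, which has infinite order by Lemma~\ref{lem:vertices}, and consider the subgroup $\langle B_k^+ \rangle \cong \mathbb{Z}$. I claim $G_k = \langle B_k^+ \rangle + T_k$. For the inclusion $\supseteq$ this is immediate; for $\subseteq$, I would use that every generator of $G_k$ is either a face center image (hence in $T_k$) or a vertex image, and by Lemma~\ref{lem:vertices} every vertex image can be written using $B_k^+$ together with the face center image $F_{(k-1,k)} \in T_k$ — concretely $W_k^+ = F_{(k-1,k)} - B_k^+$, $B_k^- = -B_k^+$, $W_k^- = -F_{(k-1,k)} + B_k^+$, and the remaining vertex images are obtained from these by adding elements of $T_k$ via the adjacency relations of Corollary~\ref{cor:fc_geom} (traveling an edge changes a vertex image by a face center image). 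Hence $G_k = \langle B_k^+\rangle + T_k$.

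It remains to check $\langle B_k^+\rangle \cap T_k = \{O_k\}$, which forces the sum to be direct and yields $G_k \cong \mathbb{Z} \times (\mathbb{Z}/(4)\times \mathbb{Z}/(2))$. This is clear because $B_k^+$ has infinite order, so no nonzero multiple of it is torsion, while every nonidentity element of $T_k$ is torsion. Assembling the pieces gives $G_k \cong \mathbb{Z} \times \mathbb{Z}/(4) \times \mathbb{Z}/(2)$, as claimed. The one point requiring genuine care — and the step I expect to be the main obstacle — is confirming that the vertex image $B_k^+$ is not merely of infinite order but is in fact a generator of the free part, i.e. that $G_k$ has rank exactly $1$ and that $B_k^+$ is not divisible by any integer $>1$ modulo torsion; this is exactly the assertion in part A of Theorem~\ref{thm:imagesinEC} that the projection of a vertex image to $\mathbb{Z}$ is a generator, and it rests on the explicit SageMath verification using the coordinates of Table~\ref{tab:vertex_images}. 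Since Lemma~\ref{lem:vertices} already records that the vertex images generate a group isomorphic to $\mathbb{Z}\times\mathbb{Z}/(2)$ with $B_k^+$ (together with a white vertex) as generators, this is available to us, and the proposition follows.
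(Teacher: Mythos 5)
Your argument is correct and takes essentially the same approach as the paper, which offers no separate proof but simply states that the proposition follows by combining Lemmas~\ref{lem:vertices} and~\ref{lem:face_centers}; your write-up supplies exactly the internal-direct-sum verification ($G_k = \langle B_k^+\rangle + T_k$ with $\langle B_k^+\rangle \cap T_k = \{O_k\}$) that makes that combination rigorous. One small remark: your closing concern about whether $B_k^+$ is divisible by an integer greater than $1$ modulo torsion is not actually needed, since the proposition concerns only the group generated by the images, and any infinite-order element generating a complement of the finite torsion part already yields the $\mathbb{Z}$ factor regardless of its divisibility in the ambient Mordell--Weil group.
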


The correspondence between vertex or face center images and possible choices of generators for each elliptic curve is summarized in Table~\ref{tab:torsion_group}.

\begin{table}[ht]
    \begin{tabular}{|c|c|c|c|c|c|c|}
    \hline
    EC $\setminus$ group elements & 0  & $\pm \ee{k}{4}$ & $2 \ee{k}{4}$ & $\ee{k}{2}, 2\ee{k}{4}+\ee{k}{2}$ & $\pm \ee{k}{\infty}$ \\\hline\hline
    $\underset{}{E_1}$ & $F_{(3,4)}$ & $F_{(4,5)}^\pm$ & $F_{(5,1)}$ & $F_{(1,2)}, F_{(2,3)}$ & $W_k^\pm, B_k^\pm$\\\hline
    $\underset{}{E_2}$ & $F_{(4,5)}$ & $F_{(5,1)}^\pm$ & $F_{(1,2)}$ & $F_{(2,3)}, F_{(3,4)}$ & $W_k^\pm, B_k^\pm$\\\hline
    $\underset{}{E_3}$ & $F_{(5,1)}$ & $F_{(1,2)}^\pm$ & $F_{(2,3)}$ & $F_{(3,4)}, F_{(4,5)}$ & $W_k^\pm, B_k^\pm$ \\\hline
    $\underset{}{E_4}$ & $F_{(1,2)}$ & $F_{(2,3)}^\pm$  & $F_{(3,4)}$ & $F_{(4,5)}, F_{(5,1)}$ & $W_k^\pm, B_k^\pm$ \\\hline
    $\underset{}{E_5}$ & $F_{(2,3)}$ & $F_{(3,4)}^\pm$  & $F_{(4,5)}$  & $F_{(5,1)}, F_{(1,2)}$ & $W_k^\pm, B_k^\pm$\\\hline
    \end{tabular}
    \vspace*{.3cm}
    \caption{Possible generators of $\mathbb{Z} \times \mathbb{Z}/(4) \times \mathbb{Z}/(2)$ }\label{tab:torsion_group}
\end{table}

It is natural to ask which vertices in the $H^5$ Adinkra are mapped to the points we have designated as $B_k^+$, $B_k^-$, $W_k^+$, $W_k^-$, $F_{k-2,k-1}^+$, and $F_{k-2,k-1}^-$. The answer is given in the following lemma, and displayed visually in Figure~\ref{fig:decorations}. 


\begin{figure}[H]
\definecolor{zzttqq}{rgb}{0.6,0.2,0}
\begin{center}
        \resizebox{.34\columnwidth}{!}{%
\begin{tikzpicture}[line cap=round,line join=round,>=triangle 45,x=1cm,y=1cm,scale=1.8]
\draw [line width=1pt, color = blue] (2.5,-0.46115823141237366)-- (2.5,0.5647362809793773);
\draw [line width=1pt,color=red] (1.8969943352083505,-1.2911243263732994)-- (2.5,-0.46115823141237366);
\draw [line width=1pt,color=violet] (2.5,-0.46115823141237366)-- (1.524316338958386,-0.14413939264732145);
\draw [line width=1pt,color=orange] (3.1030056647916484,-1.2911243263733005)-- (2.5,-0.46115823141237366);
\draw [line width=1pt,color=green] (3.475683661041614,-0.1441393926473227)-- (2.5,-0.46115823141237366);
\begin{scriptsize}

\draw [fill=white] (2.5,-0.46115823141237366) circle (1.2pt);
\draw[color=black] (2.63,-0.3) node {$v$};
\draw [fill=black,color = blue] (2.5,0.5647362809793773) circle (1.2pt);
\draw[color=black] (2.483478086269523,0.7463646395605015) node {$v_1: ({-},{+},{+},{-}, {+})$};
\draw [fill=black,color=green] (3.475683661041614,-0.1441393926473227) circle (1.2pt);
\draw[color=black,anchor=south] (3.57,-0.1) node {$v_2: ({+},{-},{+},{+}, {-})$};
\draw [fill=black,color=violet] (1.524316338958386,-0.14413939264732145) circle (1.2pt);
\draw[color=black,anchor = south] (1.6,-0.14) node {$v_5: ({+},{+},{-},{+}, {-})$};
\draw [fill=black,color =red] (1.8969943352083505,-1.2911243263732994) circle (1.2pt);
\draw[color=black,anchor = north] (1.6,-1.3) node {$v_4: ({+},{-},{+},{-}, {+})$};
\draw [fill=black,color = orange] (3.1030056647916484,-1.2911243263733005) circle (1.2pt);
\draw[color=black,anchor = north] (3.57,-1.3) node {$v_3: ({-},{+},{-},{+}, {+})$};
\draw (3,-2) node {};
\end{scriptsize}
\end{tikzpicture}}
\resizebox{.64\columnwidth}{!}{%
\begin{tikzpicture}[line cap=round,line join=round,>=triangle 45,x=1cm,y=1cm,scale=1.8]
\fill[line width=1pt,fill=gray,fill opacity=0.1] (2.5,-0.46115823141237366) -- (2.5,0.5647362809793773) -- (3,1.0776835371752527) -- (3.475683661041614,-0.1441393926473227) -- cycle;
\fill[line width=1pt,fill=gray,fill opacity=0.1] (2.5,-0.46115823141237366) -- (3.475683661041614,-0.1441393926473227) -- (4.118033988749895,-0.46115823141237394) -- (3.1030056647916484,-1.2911243263733005) -- cycle;
\fill[line width=1pt,fill=gray,fill opacity=0.1] (2.5,-0.46115823141237366) -- (3.1030056647916484,-1.2911243263733005) -- (3,-2) -- (1.8969943352083505,-1.2911243263732994) -- cycle;
\fill[line width=1pt,fill=gray,fill opacity=0.1] (2.5,-0.46115823141237366) -- (1.8969943352083505,-1.2911243263732994) -- (1.1909830056250525,-1.4122147477075266) -- (1.524316338958386,-0.14413939264732145) -- cycle;
\fill[line width=1pt,fill=gray,fill opacity=0.1] (2.5,-0.46115823141237366) -- (1.524316338958386,-0.14413939264732145) -- (1.190983005625053,0.48989828488278) -- (2.5,0.5647362809793773) -- cycle;
\draw [line width=1pt,color=blue] (2.5,-0.46115823141237366)-- (2.5,0.5647362809793773);
\draw [line width=1pt,color=red] (2.5,0.5647362809793773)-- (2,1.0776835371752527);
\draw [line width=1pt,color=blue] (2,1.0776835371752527)-- (1.8969943352083505,-1.2911243263732994);
\draw [line width=1pt,color=violet] (2.5,-0.46115823141237366)-- (1.524316338958386,-0.14413939264732145);
\draw [line width=1pt,color=orange] (1.524316338958386,-0.14413939264732145)-- (0.8819660112501055,-0.4611582314123732);
\draw [line width=1pt,violet] (0.8819660112501055,-0.4611582314123732)-- (3.1030056647916484,-1.2911243263733005);
\draw [line width=1pt,color=red] (2.5,-0.46115823141237366)-- (1.8969943352083505,-1.2911243263732994);
\draw [line width=1pt,color=green] (1.8969943352083505,-1.2911243263732994)-- (2,-2);
\draw [line width=1pt,color=red] (2,-2)-- (3.475683661041614,-0.1441393926473227);
\draw [line width=1pt,color = green] (3.475683661041614,-0.1441393926473227)-- (2.5,-0.46115823141237366);
\draw [line width=1pt,color=orange] (2.5,-0.46115823141237366)-- (3.1030056647916484,-1.2911243263733005);
\draw [line width=1pt,color=blue] (3.1030056647916484,-1.2911243263733005)-- (3.809016994374947,-1.412214747707527);
\draw [line width=1pt,color=orange] (3.809016994374947,-1.412214747707527)-- (2.5,0.5647362809793773);
\draw [line width=1pt,color=violet] (3.475683661041614,-0.1441393926473227)-- (3.8090169943749475,0.4898982848827793);
\draw [line width=1pt,color=green] (3.8090169943749475,0.4898982848827793)-- (1.524316338958386,-0.14413939264732145);
\draw [line width=1pt,color=violet] (2.5,0.5647362809793773)-- (1.190983005625053,0.48989828488278);
\draw [line width=1pt,color=blue] (1.190983005625053,0.48989828488278)-- (1.524316338958386,-0.14413939264732145);
\draw [line width=1pt,color=red] (1.524316338958386,-0.14413939264732145)-- (1.1909830056250525,-1.4122147477075266);
\draw [line width=1pt,color=violet] (1.1909830056250525,-1.4122147477075266)-- (1.8969943352083505,-1.2911243263732994);
\draw [line width=1pt,color=orange] (1.8969943352083505,-1.2911243263732994)-- (3,-2);
\draw [line width=1pt,color=red] (3,-2)-- (3.1030056647916484,-1.2911243263733005);
\draw [line width=1pt,color=green] (3.1030056647916484,-1.2911243263733005)-- (4.118033988749895,-0.46115823141237394);
\draw [line width=1pt,color=orange] (4.118033988749895,-0.46115823141237394)-- (3.475683661041614,-0.1441393926473227);
\draw [line width=1pt,color=blue] (3.475683661041614,-0.1441393926473227)-- (3,1.0776835371752527);
\draw [line width=1pt,color=green] (3,1.0776835371752527)-- (2.5,0.5647362809793773);
 \begin{scriptsize}
 \draw [color = black, fill = white]
 (2,-2) circle (1.2pt);
\draw[color=black,anchor = north east] (2.45,-2.06) node {$v_{24}: (+,+,+,-,-)$};
\draw [color = black, fill = white]
(3,-2) circle (1.2pt);
\draw[color=black,anchor=north west] (2.7,-2.) node {$v_{34}: (-,-,-,-,+)$};
\draw [color = black, fill = white]
(3.809016994374947,-1.412214747707527) circle (1.2pt);
\draw[color=black,anchor = west] (3.55,-1.58) node {$v_{13}: (+,+,-,-,+)$};
\draw [color = black, fill = white]
(4.118033988749895,-0.46115823141237394) circle (1.2pt);
\draw[color=black,anchor=west] (3.87,-0.67) node {$v_{23}: (-,-,-,+,-)$};
\draw [color = black, fill = white]
(3.8090169943749475,0.4898982848827793) circle (1.2pt);
\draw[color=black,anchor = south west] (3.51,0.52) node {$v_{25}: (+,-,-,+,+)$};
 \draw [color = black, fill = white]
 (3,1.0776835371752527) circle (1.2pt);
\draw[color=black, anchor = south] (3.4,1.08) node {$v_{12}: (-,-,+,-,-)$};
\draw [color = black, fill = white]
(2,1.0776835371752527) circle (1.2pt);
\draw[color=black,anchor = south east] (2.4,1.08) node {$v_{14}: (-,-,+,+,+)$};
\draw [color = black, fill = white]
(1.190983005625053,0.48989828488278) circle (1.2pt);
\draw[color=black, anchor = south east] (1.6,0.49) node {$v_{15}: (-,+,-,-,-)$};
\draw [color = black, fill = white]
(0.8819660112501055,-0.4611582314123732) circle (1.2pt);
\draw[color=black,anchor = east] (1.25,-0.67) node {$v_{35}: (-,+,+,+,-)$};
\draw [color = black, fill = white]
(1.1909830056250525,-1.4122147477075266) circle (1.2pt);
\draw[color=black,anchor = east] (1.45,-1.56) node {$v_{45}: (+,-,-,-,-)$};
 \draw [fill=white] (2.5,-0.46115823141237366) circle (1.2pt);
 \draw[color=black] (2.63,-0.3) node {$v$};
 \draw [fill=black,color = blue] (2.5,0.5647362809793773) circle (1.2pt);
 \draw[color=black] (2.483478086269523,0.7463646395605015) node {$v_1$};
 \draw [fill=black,color=green] (3.475683661041614,-0.1441393926473227) circle (1.2pt);
 \draw[color=black] (3.67,-0.1) node {$v_2$};
 \draw [fill=black,color = violet] (1.524316338958386,-0.14413939264732145) circle (1.2pt);
\draw[color=black] (1.36,-0.1) node {$v_5$};
\draw [fill=black,color=red] (1.8969943352083505,-1.2911243263732994) circle (1.2pt);
\draw[color=black] (1.8,-1.4) node {$v_4$};
\draw [fill=black,color=orange] (3.1030056647916484,-1.2911243263733005) circle (1.2pt);
\draw[color=black] (3.25,-1.45) node {$v_3$};
\end{scriptsize}
\end{tikzpicture}}
\end{center}
\caption{The 5-tuples in left pane display sign change between the images of $v$ and $v_i$  in $E_1, \ldots, E_5$, where $v_i$ is obtained from $v$ by traveling  along color $i$. The 5-tuples in right pane display sign change between the images of $v$ and $v_{ij}$  in $E_1, \ldots, E_5$, where $v_{ij}$ is obtained from $v$ by traveling  along colors $i$ and $j$.}\label{fig:decorations}
\end{figure}
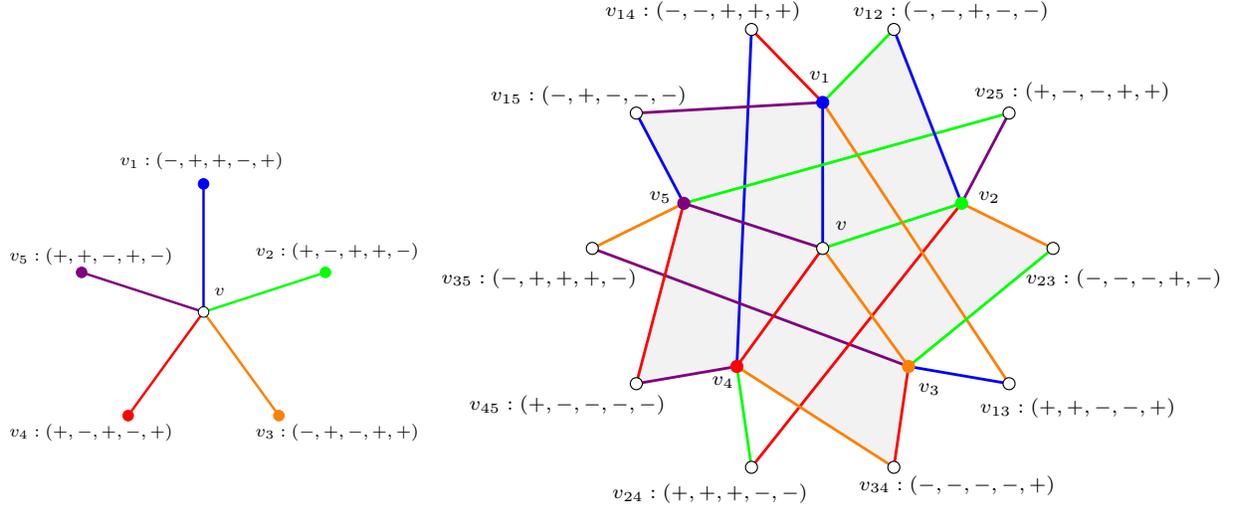

\begin{lemma}\label{lem:color_split}
Given a vertex $v \in H^5$ with image $W_k^\sigma$ (or $B_k^\sigma$) in $E_k$, the image in $E_k$ of $v_j$ (the vertex adjacent to $v$ along color $j$) is $B^\pm_k$ (or $W^\pm_k$),
where the value of $\pm$ is determined by $S_{k,v}(v_j) = (-1)^{\{k,k+2\}\cup \{j\}}$. Similarly, let ${\bf f}$ be a face center point adjacent to a certain black or white vertex ${\bf v}$ and suppose ${\bf f}'$ is the face center point adjacent to ${\bf v}_j$ (the point adjacent to ${\bf v}$ along $j$). If the image  of  ${\bf f}$ in $E_k$ is $(x,y)$, then the image of ${\bf f}'$ in $E_k$ is $(\overline{x},\pm \overline{y})$. The value of $\pm$ is determined by $S_{k,v}(v_j) = (-1)^{\{k,k+2\}\cup \{j\}}$.

\end{lemma}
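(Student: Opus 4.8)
The plan is to reduce the statement to the effect of crossing a single edge and then carry out a short finite verification. Suppose $v$ and $v_j$ are joined by an edge of color $j$. By Lemma~\ref{lem:adj} (for vertices) and Corollary~\ref{cor:fc_geom} (for the incident face centers), passing from $\mathbf{v}$ to $\mathbf{v}_j$, or from a face center $\mathbf{f}$ incident to $\mathbf{v}$ to the corresponding face center $\mathbf{f}'$ incident to $\mathbf{v}_j$, replaces each homogeneous coordinate $x_\ell$ of the point in $X_{\rm alg}^5$ by ${\rm sgn}_\ell(j)\,\overline{x_\ell}$, with the signs read off from Table~\ref{tab:adj}. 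So it is enough to understand how each map $\nu_k$ of Equation~\ref{eq:maps} responds to the substitution $x_\ell \mapsto {\rm sgn}_\ell(j)\,\overline{x_\ell}$, and then to track the resulting sign through the identifications of Table~\ref{tab:vertex_images} and Table~\ref{tab:face_images}.

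The key step will be a structural observation about the formulas in Equation~\ref{eq:maps}: for every $k$, the $x$-coordinate of $\nu_k$ has the shape $c\,x_k^2/x_{k-1}^2 + c'$ with $c,c' \in \mathbb{Q}(\phi) \subset \mathbb{R}$ (indices mod $5$), while the $y$-coordinate has the shape $c''\,x_a x_b x_c / x_{k-1}^3$ with $\{a,b,c\} = \{1,\dots,5\}\setminus\{k-1,k\}$ and $c''$ lying in $\mathbb{Q}(\phi)$ when $k=1$ and in $i\,\mathbb{Q}(\phi)$ when $k \in \{2,3,4,5\}$. Under $x_\ell \mapsto {\rm sgn}_\ell(j)\overline{x_\ell}$ the two squared coordinates each contribute a squared sign, so the $x$-coordinate of $\nu_k(\mathbf{v}_j)$ is exactly the complex conjugate of the $x$-coordinate of $\nu_k(\mathbf{v})$; the $y$-coordinate, on the other hand, acquires the product of ${\rm sgn}_\ell(j)$ over $\ell \in \{a,b,c\}$ together with ${\rm sgn}_{k-1}(j)^3 = {\rm sgn}_{k-1}(j)$, i.e.\ the factor $\prod_{\ell \ne k}{\rm sgn}_\ell(j)$, and in addition the factor $\delta_k := c''/\overline{c''} \in \{\pm1\}$ from conjugating the leading constant ($\delta_1 = 1$, $\delta_k = -1$ for $k \ge 2$). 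Writing $\varepsilon_k(j) := \delta_k \prod_{\ell \ne k}{\rm sgn}_\ell(j) \in \{\pm 1\}$, this gives
\[
\nu_k(\mathbf{v}_j) \;=\; \varepsilon_k(j)\cdot \overline{\nu_k(\mathbf{v})},
\]
where $\overline{(\cdot)}$ denotes coordinatewise complex conjugation---an automorphism of $E_k$, since each $E_k$ is defined over $\mathbb{Q}(\phi) \subset \mathbb{R}$---and $-P$ is the group inverse on $E_k$. The identical identity holds with $\mathbf{v},\mathbf{v}_j$ replaced by the face centers $\mathbf{f},\mathbf{f}'$, which yields at once the assertion that the image of $\mathbf{f}'$ is $(\overline{x},\pm\overline{y})$ when the image of $\mathbf{f}$ is $(x,y)$, with $\pm=\varepsilon_k(j)$.

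What remains is to identify $\varepsilon_k(j)$ combinatorially. By Table~\ref{tab:vertex_images}, complex conjugation carries $\{W_k^{\pm}\}$ onto $\{B_k^{\pm}\}$, exchanging the $+$/$-$ labels exactly when $B_k^+ = -\overline{W_k^+}$ (that is, for $k = 1,4$) and preserving them otherwise, and Table~\ref{tab:face_images} records its effect on the order-$4$ face-center images. Combining this with $\nu_k(\mathbf{v}_j) = \varepsilon_k(j)\,\overline{\nu_k(\mathbf{v})}$, the $+$/$-$ label of the image of $v_j$ is the label of the image of $v$ times a sign depending only on $k$ and $j$; a direct tabulation of $\varepsilon_k(j)$ over the $25$ pairs $(k,j)$ using Table~\ref{tab:adj} shows that, once the conjugation behavior on labels is taken into account, this sign flips precisely when $j \in \{k,k+2\} \pmod 5$. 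Since the $k$-color-splitting $S_{k,v}$ counts edges of colors $k$ and $k+2$ (Algorithm~\ref{A:colorSplit}, Proposition~\ref{prop:hypercube_color_splittings}), crossing an edge of color $j$ changes $S_{k,v}$ under exactly the same condition; this is the meaning of $S_{k,v}(v_j) = (-1)^{\{k,k+2\}\cup\{j\}}$ and is what Figure~\ref{fig:decorations} displays, and the same tabulation of $\varepsilon_k(j)$ governs the sign on the $y$-coordinate of $\mathbf{f}'$.

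I expect the only genuinely delicate point to be this last bookkeeping step: complex conjugation does not act uniformly in $k$ on the $+$/$-$ labels of the vertex (and face-center) images, so reconciling it with the single combinatorial rule for $S_{k,v}$ forces one to check the $5\times 5$ table of signs $\varepsilon_k(j)$ against Table~\ref{tab:adj} honestly, with the cyclic arithmetic $\pmod 5$ aligned to the conventions of that table. Everything else follows immediately from the explicit formulas in Equation~\ref{eq:maps} and the adjacency descriptions already established in Lemma~\ref{lem:adj} and Corollary~\ref{cor:fc_geom}.
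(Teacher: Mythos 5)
Your argument is correct and follows essentially the same route as the paper's proof: the paper computes your $\varepsilon_k(j)$ as the matrix $M^y$ (the product of the Table~\ref{tab:adj} signs over $\ell\neq k$ times the conjugation factor $\delta_k$, which is $+1$ for $k=1$ and $-1$ for $k\geq 2$), applies the row-$1$,$4$ correction coming from $B_k^+=-\overline{W_k^+}$ in Table~\ref{tab:vertex_images} to obtain $M'$, and then verifies $M'_{k,j}=S_{k,v}(v_j)$ entry by entry --- which is precisely the $5\times 5$ tabulation you defer to the last step, before likewise invoking Corollary~\ref{cor:fc_geom} for the face centers. The only substantive difference is that the paper writes out those sign matrices explicitly rather than leaving the finite check implicit.
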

   This lemma implies that under the labeling set out in Table~\ref{tab:vertex_images}, for each $k$, in a $k$-color-splitting of $H^5$ with starting vertex $(1,1,1,1,1)$, the black vertices in $H^5$ labeled  $+$ and $-$  are mapped to $B_k^+$ and $B_k^-$, respectively; the white vertices labeled  $+$ and $-$ are mapped to $W_k^+$ and $W_k^-$, respectively; and the face centers ${\bf f}_{k-2,k-1}$ labeled $+$ and $-$ are mapped to ${F}_{k-2,k-1}^+$ and ${F}_{k-2,k-1}^-$, respectively. For any vertex $u$, a $k$-color-splitting of $H^5$ with starting vertex $u$ gives the corresponding sign {\it changes} between $\nu_k(u)$ and $\nu_k(u')$ for vertices $u' \in H^5$ (see Figure~\ref{fig:decorations}).
   \begin{proof}
Assume that $\nu_k(v) = W_k^{\sigma_k}$.
Recall from Lemma \ref{lem:adj} that the sign change of the $i$th coordinates of $v$ and $v'$ which are adjacent along color $j$ is given by  $M_{i,j}$, where $M$ is the matrix displayed below.
\[
 M=  \left[\begin{array}{ccccc}
           +1 & +1 & +1 & +1 &  +1  \\
           -1 & -1 & -1 & -1 & +1  \\
           -1 & +1 & +1 & +1 &-1   \\
           -1 & -1 & +1 & +1 & -1 \\
          -1 & -1 & -1 & +1 & -1
    \end{array}\right]
    \]
Let $M^y$ be the matrix defined by $M^y_{k,j} =\frac{\nu_k(v_j)_y}{\overset{\ }{\overline{\nu_k(v)_y}}}$, where $\nu_k(v)_y$ is the $y$-coordinate of the image of $v$ under $\nu_k$. From Equation~\ref{eq:maps}, we see that this can be obtained by first considering how coordinate sign changes affect the sign of $\nu_k(v_j)_y$, and then how coordinate-wise conjugations in $v_j$ relate to  $\overline{\nu_k(v)_y}$ . The coordinate-wise sign changes in $v_j$ amount to an overall sign change of  $  M_{1,j}\cdot M_{2,j}\cdots \hat M_{k,j} \cdots \cdot M_{5,j} $ in $\nu_k(v_j)_y$ (that is, we omit the $k$th entry from the product). This gives the intermediate matrix
\[
 \left[\begin{array}{ccccc}
           +1 & -1 & +1 & -1 &  -1  \\
           -1 & +1 & -1 & +1 & -1  \\
           -1 & -1 & +1 & -1 &+1   \\
           -1 & +1 & +1 & -1 & +1 \\
          -1 & +1 & -1 & -1 & +1
    \end{array}\right]
\]
Examining the $y$-coordinates in Equation \ref{eq:maps}, we find that conjugating all coordinates of $v_j$ and conjugating the $y$-coordinate of the image point are equivalent for $k=1$, and have opposite signs for $k=2,\ldots, 5$. Thus,
\[
M^y= \left[\begin{array}{ccccc}
           +1 & -1 & +1 & -1 &  -1  \\
           +1 & -1 & +1 & -1 & +1  \\
           +1 & +1 & -1 & +1 & -1   \\
           +1 & -1 & -1 & +1 & -1 \\
          +1 &  -1 & +1 & +1 & -1
    \end{array}\right]
\]

Finally, consider the matrix $M'_{k,j} = \frac{\nu_k(v_j)_y}{(B_k^{\sigma_k})_y}$, obtained from $M^{{y}}$ by multiplying rows 1 and 4 by $-1$ (per Table \ref{tab:vertex_images}).

\[
M'=  \left[\begin{array}{ccccc}
           -1 & +1 & -1 & +1 &  +1  \\
           +1 & -1 & +1 & -1 & +1  \\
           +1 & +1 & -1 & +1 &-1   \\
           -1 & +1 & +1 & -1 & +1 \\
          +1 & -1 & +1 & +1 & -1
    \end{array}\right]
\]

Now we observe that  $\frac{\nu_k(v_j)_y}{(B_k^\sigma)_y} = M_{k,j}'=S_{k,v}(v_j)$.     

The same proof holds for the case using instead $\nu_k = B_k^{\sigma_k}$, with $M'_{k,j} = \frac{\nu_k(v_j)_y}{(W_k^{\sigma_k})_y}$. Corollary~\ref{cor:fc_geom} gives the result for face center points. 
\end{proof}

We have now completely described the images of points and face centers, establishing Theorem~\ref{thm:imagesinEC}.

\subsection{First Jacobian divisor examples}

In this section, we illustrate Theorem~\ref{thm:imagesinEC} by computing the abelian group elements associated to certain interesting Jacobian divisors. Further discussion of Jacobian divisors will be given in Section~\ref{S:Heights_Divisors_5}. 

     Set ${\bf w}_0={\bf w}(1,1,1,1,1)$, and  ${\bf b}_0={\bf b}(0,0,0,0,0)$. Let us use these points to make an explicit choice of generators for the group map referred to in Proposition~\ref{prop:abelian_group}. For each $k$ define $\ee{k}{\infty} = \nu_k({\bf w}_0)$, $\ee{k}{4} = \nu_k({\bf f}_{k-2,k-1}(1,1,1,1,1))$, and $\ee{k}{2} = \nu_k({\bf f}_{k,k+1}(1,1,1,1,1))$. (Recall that under the group law on $E_k$,  $\ee{k}{4}$ is a point of order 4, $\ee{k}{2}$ is a point of order 2, and $\ee{k}{\infty}$ is a point of infinite order.) Let ${\bf w}={\bf w}(x_1,\ldots, x_5)$, ${\bf b}={\bf b}(x_1,\ldots, x_5)$, and ${\bf f}_{k',k'+1}={\bf f}_{k',k'+1}(x_1,\ldots, x_5)$  for $k = 1, \ldots, 5$ be any white vertex, black vertex, and face centers. Their images are described below. \\
    \begin{align}\label{conv:fc}
        &{\bf w} \mapsto S_{k,{\bf w}_0}({\bf w}) \ee{k}{\infty}
        &&{\bf b} \mapsto S_{k,{\bf b}_0}({\bf b}) (\ee{k}{\infty}+2\ee{k}{4})\notag\\
        &{\bf f}_{k-2,k-1} \mapsto S_{k,{\bf w}_0}({\bf f}_{k-2,k-1}) \ee{k}{4}
        &&{\bf f}_{k-1,k} \mapsto  2\ee{k}{4}\\
        & {\bf f}_{k,k+1} \mapsto \ee{k}{2}
        & &{\bf f}_{k+1,k+2} \mapsto 2\ee{k}{4}+\ee{k}{2}\notag\\
        &{\bf f}_{k+2,k-2} \mapsto 0,&&\notag
    \end{align}
   The choice of sign $S_{k,v}$ is described in Lemma~\ref{lem:color_split}. 

We emphasize that none of the choices $\ee{k}{\infty} = \nu_k({\bf w}_0)$, $\ee{k}{4} = \nu_k({\bf f}_{k-2,k-1}(1,1,1,1,1))$,  $\ee{k}{2} = \nu_k({\bf f}_{k,k+1}(1,1,1,1,1))$ are canonical; the description of the image points resulting from a different choice for $\ee{k}{\infty}, \ee{k}{4}, \ee{k}{2}$, can be determined by applying a group isomorphism that exchanges the relevant generators (i.e., $\ee{k}{\infty} \mapsto -\ee{k}{\infty}$ or $-\ee{k}{\infty}+2\ee{k}{4}$, $\ee{k}{4} \mapsto -\ee{k}{4}$, $\ee{k}{2}\mapsto 2\ee{k}{4}+\ee{k}{2}$).

\begin{example}\label{ex:2}

In Example~\ref{ex:divisors}, we described Morse divisors obtained by considering the valise and fully extended height functions for $H^5$, as well as the height functions obtained by starting with the fully extended height and lowering either one or two divisors. We wish to compute the $k$th coordinates of the corresponding Jacobian divisors, $\nu_k(D_{\rm v})$, $\nu_k(D_{\rm fe})$, $\nu_k(D_{1})$, and $\nu_k(D_{2})$, on ${\rm Jac}(X_{\rm alg}^5)$.

We will need the images of certain specific vertices and face centers:

    \[
    \nu_k({\bf w}{(1,1,1,1,1)}) = 
        W_k^+ = \ee{k}{\infty} \text{ for } k =1,2,3,4,5,  
\qquad
       \nu_k({\bf b}{(0,0,0,0,0)}) = 
    B_k^+ =-\ee{k}{\infty}+2\ee{k}{4} \text{ for } k =1,2,3,4,5,  
    \]
    \[
    \nu_k({\bf w}{(1,0,1,0,1)}) = 
    \begin{cases}
        W_k^+= \ee{k}{\infty}  & k =1,2,3 \\
        W_k^-= -\ee{k}{\infty}  & k =4,5
    \end{cases}
, \qquad
\nu_k({\bf b}{(1,1,1,0,1)}) = 
    \begin{cases}
        B_k^+  =-\ee{k}{\infty}+2\ee{k}{4}& k =1,3,5  \\
        B_k^- =\ee{k}{\infty}+2\ee{k}{4} & k =2,4
    \end{cases}
    \]    
    \[
    \nu_k({\bf f}_{1,2}{(1,1,1,1,1)}) = 
    \begin{cases}
        E_1(1,0) = \ee{1}{2} & k = 1 \\
        E_2(\phi,0) = 2\cdot \ee{2}{4} & k =2\\
        E_3(\phi+1, \phi^3)= \ee{3}{4} & k =3\\
        E_4(\infty, \infty) = O_4   & k =4\\
        E_5(0,0)= 2\cdot \ee{5}{4} + \ee{5}{2} & k =5
    \end{cases}
    \]

We begin computing Jacobian divisor coordinates by considering the valise divisor $D_{\rm v}$. We have 

\[
D_{\rm v} = \sum_{f \text{ face center}} f - \sum_{v \text{ vertex}} v,
\]
Let $w_0 = (1,1,1,1,1)$. Then the set of all white vertices is equal to the disjoint union of (a) $\{w_0\}$, (b) those vertices that are distance 2 from $w_0$, and (c) those vertices that are distance 1 from $(0,0,0,0,0)$. By combining all labels from Figure \ref{fig:decorations}, we see that 
\[
\sum_{w \text{ white vertex}} \nu_k(w) = W_k^+ +2W_k^- + \overline{B_k^+}.
\]
Similarly, 
\[
\sum_{b \text{ black vertex}} \nu_k(b) = B_k^+ +2B_k^- + \overline{W_k^+},
\]
and thus
\[
\sum_{v \text{ vertex}} \nu_k(w) = O_k.
\]
Next, we consider face centers. For each $j \in [5]$, there are 8 $j,j+1$ colored face centers in $H^5$. On $E_k$, $\nu_k(\sum_{f_{j,j+1}} f_{j,j+1}) = O_k$ trivially for all $j \neq k-2 \mod 5$  (cf. Table \ref{tab:torsion_group}). In the case that $j \equiv k-2 \mod 5$, set $w = (1,1,1,1,1)$. Then Figure \ref{fig:decorations} shows that  $\nu_{k}(f_{k-2,k-1}(w)) = -e_4 $ for  $w_k$, $v_{k+2}, w_{k,k+1}, w_{k+1,k+2}$ and $\nu_{k}(f_{k-2,k-1}(w)) = e_4 $ for the remaining four cases. Thus 
\[
 \sum_{f \text{ face center}} \nu_k(f)=O_k,
\qquad \text{and}\qquad  
\nu_k(D_{\rm v}) = O_k.
\]

Next, consider the fully extended divisor $D_{\rm fe}$.
We have 
\[
\nu_k(D_{\rm fe}) = \sum_{v \in \mathcal{V}} v - (1,1,1,1,1) - (0,0,0,0,0),
\]
where 
\[
\mathcal{V} = \left\{
\begin{array}{c}(1,0,1,0,1),(0,1,0,1,1),(1,0,1,1,0),(0,1,1,0,1),(1,1,0,1,0),\\(0,1,0,1,0), (1,0,1,0, 0), (0,1,0, 0,1), (1,0, 0,1,0), (0, 0,1,0,1)\end{array}\right\}.
\]

From Lemma~\ref{lem:color_split}, we find the values in the following table
\begin{center}
    \begin{tabular}{|c| c| c| c| c| c|| c| c| c| c| c| c|}
    \hline
    & $E_1$ &  $E_2$ &  $E_3$ &  $E_4$ &  $E_5$ & 
    & $E_1$ &  $E_2$ &  $E_3$ &  $E_4$ &  $E_5$ \\\hline
    $\nu_k({\bf w}(1,0,1,0,1)) $&$ W_1^+$&$W_2^+$ & $W_3^+$ & $W_4^-$ & $W_5^-$ & $\nu_k({\bf b}(0,1,0,1,0)) $ & $B_1^+$ & $B_2^+$ & $B_3^+$ & $B_4^-$ & $B_5^-$\\\hline
   $\nu_k({\bf w}(1,1,0,1,0)) $ & $W_1^-$ & $W_2^+$ & $W_3^+$ & $W_4^+$ & $W_5^-$ &
   $\nu_k({\bf b}(0,0,1,0,1)) $ & $B_1^-$ & $B_2^+$ & $B_3^+$ & $B_4^+$ & $B_5^-$\\\hline
    $\nu_k({\bf w}(0,1,1,0,1)) $ & $W_1^-$ & $W_2^-$ & $W_3^+$ & $W_4^+$ & $W_5^+$ &
   $\nu_k({\bf b}(1,0,0,1,0)) $ & $B_1^-$ & $B_2^-$ & $B_3^+$ & $B_4^+$ & $B_5^+$\\\hline
    $\nu_k({\bf w}(1,0,1,1,0)) $ & $W_1^+$ & $W_2^-$ & $W_3^-$ & $W_4^+$ & $W_5^+$ & 
   $\nu_k({\bf b}(0,1,0,0,1)) $ & $B_1^+$ & $B_2^-$ & $B_3^-$ & $B_4^+$ & $B_5^+$\\\hline
    $\nu_k({\bf w}(0,1,0,1,1)) $ & $W_1^+$ & $W_2^+$ & $W_3^-$ & $W_4^-$ & $W_5^+$ &
   $\nu_k({\bf b}(1,0,1,0,0))$ & $B_1^+$ & $B_2^+$ & $B_3^-$ & $B_4^-$ & $B_5^+$\\\hline
    \end{tabular}
\end{center}
\vspace*{2mm}

Note that for each $k=1,\ldots, 5$, we now have 
\[
\sum_{v \in \mathcal{V}} \nu_k(v) = W_k^+ + B_k^+.
\]
Thus, 
$\nu_k(D_{\rm fe} )= 
O_k\in E_k$.
\vspace{.2cm}

\noindent The geometric divisor $D_1$ associated to the height function $h_1$ in Example \ref{ex:divisors}{\bf (c)} differs from $D_{\rm fe}$ by 
\[
D_{1}-D_{\rm fe}=\sum_{v \in {\mathcal{V}''}} v -\sum_{v \in {\mathcal{V}'}} v
\]
for 
\begin{align*}
\mathcal{V}'' &= \{{\bf f}_{12}(1,1,1,1,1), {\bf f}_{23}(1,1,1,1,1),{\bf f}_{34}(1,1,1,1,1),{\bf f}_{45}(1,1,1,1,1),{\bf f}_{51}(1,1,1,1,1),
\},\\[2mm]
\mathcal{V}' &= \{{\bf b}(0,1,1,1,1),{\bf b}(1,0,1,1,1),{\bf b}(1,1,0,1,1),{\bf b}(1,1,1,0,1),{\bf b}(1,1,1,1,0),
\},\\
\textbf{}\end{align*}
Using the description in Equation~\ref{conv:fc}, we find that

\begin{center}
    \begin{tabular}{|c|c|c|c|c|c|}
    \hline
    & $E_1$& $E_2$& $E_3$& $E_4$& $E_5$\\\hline
    $\nu_k({\bf f}_{12}(1,1,1,1,1))$&$\ee{1}{2}$&$2\cdot \ee{2}{4}$&$\ee{3}{4}$&$O_4$&$2\cdot \ee{5}{4}+\ee{5}{2}$ \\\hline
    $\nu_k({\bf f}_{23}(1,1,1,1,1))$&$2\cdot \ee{1}{4}+\ee{1}{2}$&$\ee{2}{2}$&$2\cdot \ee{3}{4}$&$\ee{4}{4}$&$O_5$ \\\hline
    $\nu_k({\bf f}_{34}(1,1,1,1,1))$&$O_1$&$2\cdot \ee{2}{4}+\ee{2}{2}$&$\ee{3}{2}$&$2\cdot \ee{4}{4}$&$\ee{5}{4}$ \\\hline
    $\nu_k({\bf f}_{45}(1,1,1,1,1))$&$\ee{1}{4}$&$O_2$&$2\cdot \ee{3}{4}+\ee{3}{2}$&$\ee{4}{2}$&$2\cdot \ee{5}{4}$ \\\hline
    $\nu_k({\bf f}_{51}(1,1,1,1,1))$&$2\cdot \ee{1}{4}$&$\ee{2}{4}$&$O_3$&$2\cdot \ee{4}{4}+\ee{4}{2}$&$\ee{5}{2}$ \\\hline\hline
    $\nu_k({\bf b}(0,1,1,1,1))$&$B_1^-$&$B_2^+$&$B_3^+$&$B_4^-$&$B_5^+$\\\hline
    $\nu_k({\bf b}(1,0,1,1,1))$&$B_1^+$&$B_2^-$&$B_3^+$&$B_4^+$&$B_5^-$\\\hline
    $\nu_k({\bf b}(1,1,0,1,1))$&$B_1^-$&$B_2^+$&$B_3^-$&$B_4^+$&$B_5^+$\\\hline
    $\nu_k({\bf b}(1,1,1,0,1))$&$B_1^+$&$B_2^-$&$B_3^+$&$B_4^-$&$B_5^+$\\\hline
    $\nu_k({\bf b}(1,1,1,1,0)) $&$B_1^+$&$B_2^+$&$B_3^-$&$B_4^+$&$B_5^-$\\\hline
    \end{tabular}
\end{center}
\vspace*{2mm}

Note that for each $k=1,\ldots, 5$, we now have 
\[
\sum_{v \in \mathcal{V}''}\nu_k(v) -\sum_{v \in {\mathcal{V}'}} \nu_k(v) =  \ee{k}{4} - B_k^+
= \ee{k}{4} +\ee{k}{\infty} +2\ee{k}{4} .
\]
Thus, 
\begin{align*}
\nu_k(D_{1}) &= 
\ee{k}{\infty} -\ee{k}{4}
\end{align*}

As discussed in Example \ref{ex:divisors}{\bf (d)}, the geometric divisor $D_2$ associated to the height $h_2$  differs from $D_{1}$ by 

 \begin{multline*}
 D_2 - D_1 = (1,1,1,1,1)+\left(0,\frac12, \frac12,1,1\right)+\left(0,1,\frac12, \frac12,1\right)+\left(0,1,1,\frac12, \frac12\right)\\
 -(0,1,0,1,1)-(0,1,1,0,1)-\left(\frac 12, \frac 12, 1,1,1 \right)-\left(\frac 12, 1, 1,1,\frac 12\right)
 \end{multline*}

\begin{center}
    \begin{tabular}{|c|c|c|c|c|c|}
    \hline
    & $E_1$& $E_2$& $E_3$& $E_4$& $E_5$\\\hline
    $-\nu_k({\bf f}_{12}(1,1,1,1,1))$&$-\ee{1}{2}$&$-2\cdot \ee{2}{4}$&$-\ee{3}{4}$&$O_4$&$-2\cdot \ee{5}{4}-\ee{5}{2}$ \\\hline
    $\nu_k({\bf f}_{23}(0,1,1,1,1))$&$2\cdot \ee{1}{4}+\ee{1}{2}$&$\ee{2}{2}$&$2\cdot \ee{3}{4}$&$-\ee{4}{4}$&$O_5$ \\\hline
    $\nu_k({\bf f}_{34}(0,1,1,1,1))$&$O_1$&$2\cdot \ee{2}{4}+\ee{2}{2}$&$\ee{3}{2}$&$2\cdot \ee{4}{4}$&$\ee{5}{4}$ \\\hline
    $\nu_k({\bf f}_{45}(0,1,1,1,1))$&$-\ee{1}{4}$&$O_2$&$2\cdot \ee{3}{4}+\ee{3}{2}$&$\ee{4}{2}$&$2\cdot \ee{5}{4}$ \\\hline
    $-\nu_k({\bf f}_{51}(1,1,1,1,1))$&$-2\cdot \ee{1}{4}$&$-\ee{2}{4}$&$O_3$&$-2\cdot \ee{4}{4}-\ee{4}{2}$&$-\ee{5}{2}$ \\\hline
    $\nu_k({\bf w}(1,1,1,1,1))$&$W_1^+ $&$W_2^+$&$W_3^+$&$W_4^+$&$W_5^+$\\\hline
    $-\nu_k({\bf w}(0,1,0,1,1))$&$-W_1^+$&$-W_2^+$&$-W_3^-$&$-W_4^-$&$-W_5^+$\\\hline
    $-\nu_k({\bf w}(0,1,1,0,1))$&$-W_1^-$&$-W_2^-$&$-W_3^+$&$-W_4^+$&$-W_5^+$\\\hline\hline
    $\nu_k(D_2-D_1)$&$\ee{1}{\infty}-\ee{1}{4}$&$\ee{2}{\infty}-\ee{2}{4}$&$\ee{3}{\infty}-\ee{3}{4}$&$\ee{4}{\infty}-\ee{4}{4}$&$-\ee{5}{\infty}+\ee{5}{4}$\\\hline
    \end{tabular}
\end{center}
\vspace*{2mm}

Thus, the $k$th coordinate of the Jacobian divisor is
\[
\nu_k(D_{2}) =\begin{cases}
 2\ee{k}{\infty} +2\ee{k}{4}& k \neq 5\\
 O_4 & k = 5.
 \end{cases}
\]

\end{example}

\FloatBarrier

\subsection{Fields of definition}\label{SS:fieldsOfDef}

If we view each elliptic curve $E_k$ as defined over a number field $K_k$, the question naturally arises of how the abelian group described in Proposition~\ref{prop:abelian_group} compares to the Mordell-Weil group $MW(E_k/K_k)$. Of course, $E_k$ itself can be defined over more than one number field; one might also ask whether the fields specified in Proposition~\ref{prop:field_choice} are in some sense minimal. One may use SageMath (\cite{sage}) to check that the rank of each of the elliptic curves $E_k$ over $\mathbb{Q}(\phi)$ is 0. Thus, we must extend $\mathbb{Q}(\phi)$ if we wish points of infinite order to exist. We characterize the minimal appropriate extension in the following proposition.

\begin{proposition}\label{prop:field_of_def}
    Let $\widehat{K_k}$ be a number field such that $MW(E_k/\widehat{K_k})$ contains a point of infinite order and either $\mathbb{Q}(\phi) \subseteq \widehat{K_k} \subseteq \mathbb{Q}(\zeta, i)$ or $\mathbb{Q}(\zeta, i) \subseteq \widehat{K_k} \subseteq \mathbb{Q}(\zeta, i, \sqrt{\phi})$. Then $[\widehat{K_k}:\mathbb{Q}] \geq 8$ for $k=1,4$, $[\widehat{K_k}:\mathbb{Q}] = 16$ for $k= 2,5$, and $[\widehat{K_k}:\mathbb{Q}] \geq 4$ for $k=3$.
 \end{proposition}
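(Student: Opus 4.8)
The plan is to analyze each elliptic curve $E_k$ separately over the tower of fields between $\mathbb{Q}(\phi)$ and the field of definition identified in Proposition~\ref{prop:field_choice}, using the fact (verified via \cite{sage}) that each $E_k$ has rank $0$ over $\mathbb{Q}(\phi)$ together with the explicit coordinates in Tables~\ref{tab:vertex_images} and~\ref{tab:face_images}. First I would enumerate the intermediate fields: since $\mathbb{Q}(\zeta,i)/\mathbb{Q}(\phi)$ is abelian with Galois group $(\mathbb{Z}/2)^2$ (as $\zeta = e^{2\pi i/10}$ generates $\mathbb{Q}(\zeta) = \mathbb{Q}(\zeta_5, i)$ restricted appropriately — more carefully, $\mathbb{Q}(\zeta)=\mathbb{Q}(\zeta_{20})^{?}$; I would pin down that $[\mathbb{Q}(\zeta,i):\mathbb{Q}(\phi)]=4$ and list the three quadratic subextensions $\mathbb{Q}(\phi,i)$, $\mathbb{Q}(\phi,\sqrt{5})=\mathbb{Q}(\zeta+\zeta^{-1})$-type fields, etc.), and similarly the quadratic steps inside $\mathbb{Q}(\zeta,i,\sqrt{\phi})/\mathbb{Q}(\zeta,i)$. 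For each candidate intermediate field $\widehat{K_k}$, the task is to decide whether any of the points $B_k^\pm, W_k^\pm$ (or, for $k=2,5$, whether the $\sqrt{\phi}$ appearing in their coordinates is actually needed) is $\widehat{K_k}$-rational.

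The key computational step is to read off, from Table~\ref{tab:vertex_images}, the minimal field over which each infinite-order generator is defined. For $k=1,3,4$ the coordinates of $W_k^+$ involve $\zeta$ (hence $i=\zeta^5$... or rather $\zeta^2$ is a primitive $5$th root and $\zeta^5=-1$, so $i$ does not automatically appear — I would check carefully which of $\zeta, i$ are genuinely required), and I would show these points are not defined over any proper subfield by a Galois-descent argument: if $\sigma$ fixes $\widehat{K_k}$ but moves the coordinate, then $\sigma$ permutes the finite set $\{B_k^\pm, W_k^\pm\}$, and one checks $\sigma$ cannot fix all of them; combined with rank $0$ over $\mathbb{Q}(\phi)$, this forces $[\widehat{K_k}:\mathbb{Q}]\geq 8$ for $k=1,4$ (and the weaker $\geq 4$ for $k=3$, reflecting that for $E_3$ the relevant field $\mathbb{Q}(\phi,i)$ already has degree $4$ and may suffice, so we cannot claim $8$). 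For $k=2,5$, the coordinates genuinely involve $\sqrt{\phi}$, which is not in $\mathbb{Q}(\zeta,i)$; I would verify $\sqrt{\phi}\notin\mathbb{Q}(\zeta,i)$ (e.g. by noting $\mathbb{Q}(\zeta,i)/\mathbb{Q}$ is abelian while $\mathbb{Q}(\sqrt{\phi})/\mathbb{Q}$ is not Galois, or by a direct ramification/embedding argument), so any field containing an infinite-order point must contain $\sqrt{\phi}$ and hence equal $\mathbb{Q}(\zeta,i,\sqrt{\phi})$, of degree $16$ — giving the exact equality. For $k=2,5$ I would also need to rule out the possibility that some infinite-order point lives in a field between $\mathbb{Q}(\phi)$ and $\mathbb{Q}(\zeta,i)$ that is not contained in $\mathbb{Q}(\zeta,i)$ — but the hypothesis of the proposition restricts $\widehat{K_k}$ to lie in one of the two prescribed towers, so this is handled by hypothesis.

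I expect the main obstacle to be the Galois-theoretic bookkeeping: correctly identifying the Galois group of $\mathbb{Q}(\zeta,i,\sqrt\phi)/\mathbb{Q}$ and all its subfields, and for each subfield $\widehat{K_k}$ in the relevant tower, exhibiting a Galois element fixing $\widehat{K_k}$ that does not fix the torsion-plus-infinite-order point set in a way compatible with the group law — i.e., showing the "obvious" candidate generators really do require the full field and are not merely one representative of a Galois orbit that happens to contain a point defined lower down. A clean way to organize this is: (i) show $E_k$ has no infinite-order point over $\mathbb{Q}(\phi)$ (given); (ii) for each quadratic extension $L/\mathbb{Q}(\phi)$ inside the tower, use \cite{sage} to compute $\mathrm{rank}(E_k/L)$, reducing the claim to a finite check; (iii) invoke the coordinate data to confirm which $L$ (if any) already supports an infinite-order point, and conclude the stated degree bounds. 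The asymmetry in the conclusion ($\geq 8$ versus $=16$ versus $\geq 4$) is exactly the shadow of these rank computations: for $k=2,5$ every proper subfield in the tower has rank $0$, while for $k=1,3,4$ some intermediate field of the stated degree already has positive rank, so only a lower bound is asserted.
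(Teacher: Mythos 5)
Your final organization---(i) rank $0$ over $\mathbb{Q}(\phi)$, (ii) enumerate the five fields between $\mathbb{Q}(\phi)$ and $\mathbb{Q}(\zeta,i)$ via the fundamental theorem of Galois theory and compute $\mathrm{rank}(E_k/L)$ for each with \cite{sage}, (iii) combine with the explicit coordinates to decide which fields support infinite-order points---is exactly the paper's proof, which records the rank computations in Table~\ref{tab:ec_rank} and then reads off the three stated bounds. One caution: the argument you describe as ``the key computational step'' in your second paragraph (showing that the \emph{particular} points $B_k^\pm, W_k^\pm$ are not rational over a proper subfield, via Galois descent on that finite point set) is not sufficient on its own, since $MW(E_k/L)$ could contain an infinite-order point other than these named generators; it is the direct rank computations over each intermediate field, not the field of definition of the chosen generators, that carry the lower bounds. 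Your step (iii) is still needed in the positive direction (to see that $\mathbb{Q}(\zeta,i)$ suffices for $k=1,4$, that a degree-$4$ field suffices for $k=3$, and that $\mathbb{Q}(\zeta,i,\sqrt{\phi})$ suffices for $k=2,5$), and there the coordinate data from Table~\ref{tab:vertex_images} is exactly the right input.
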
 

\begin{proof}
The Galois group $\mathrm{Gal}(\mathbb{Q}(\zeta, i):\mathbb{Q})$ is isomorphic to $\mathbb{Z}/(4) \times \mathbb{Z}/(2)$; the Galois group $\mathrm{Gal}(\mathbb{Q}(\zeta, i):\mathbb{Q}(\phi))$ is isomorphic to $\mathbb{Z}/(2) \times \mathbb{Z}/(2)$. Over $\mathbb{Q}$, the minimal polynomial of $\zeta = e^{2\pi i/10}$ is $x^4 - x^3 + x^2 - x + 1$. This polynomial factors as $(x^2 - \phi x + 1)  (x^2 + (\phi - 1)x + 1)$ over $\mathbb{Q}(\phi)$. By the fundamental theorem of Galois theory, there are five fields containing $\mathbb{Q}(\phi)$ and contained in $\mathbb{Q}(\zeta, i)$, including these fields themselves; we write the intermediate fields as $\mathbb{Q}(\phi, i)$, $\mathbb{Q}(\phi, i\sqrt{3-\phi})$, and $\mathbb{Q}(\phi, i\sqrt{\phi+2})$. We use SageMath (\cite{sage}) to compute the rank of $E_k$ over these fields for each $k$. The result of this computation is given in Table~\ref{tab:ec_rank}. We see that $E_1=E_4$ has a point of infinite order for $\mathbb{Q}(\zeta, i)$ but not the intermediate fields, while $E_3$ has points of infinite order in two different quadratic extensions of $\mathbb{Q}(\phi)$. We have already demonstrated that $E_2=E_5$ has points of infinite order over $\mathbb{Q}(\zeta, i, \sqrt{\phi})$.
\end{proof}

\begin{table}[ht]
    \begin{tabular}{|c|c|c|c|}
    \hline
     & rank of $E_1 = E_4$ & rank of $E_2 = E_5$ & rank of $E_3$ \\
     \hline
    $\mathbb{Q}(\phi)$ & 0 & 0& 0 \\
       \hline
    $\mathbb{Q}(\phi, i)$ & 0 & 0 & 0\\ \hline
    $\mathbb{Q}(\phi, i\sqrt{3-\phi})$ & 0 & 0 & 1\\ \hline
    $\mathbb{Q}(\phi, i\sqrt{\phi+2})$ &  0&  0& 1\\ \hline
    $\mathbb{Q}(\zeta, i)$ &  1&  0& 1\\
    \hline
    \end{tabular}
    \caption{Ranks of $E_k$ over extensions of $\mathbb{Q}(\phi)$}\label{tab:ec_rank}
    \end{table}

Similarly, we may use SageMath to compute the torsion part of the Mordell-Weil group of the elliptic curves $E_k$ over different field extensions. The results of this computation are shown in Table~\ref{tab:torsion_over_extension}. 

\begin{table}[ht]
    \begin{tabular}{|c|c|c|c|}
    \hline
     &  $MW_\mathrm{tor}(E_1 = E_4)$ &  $MW_\mathrm{tor}(E_2 = E_5)$ &  $MW_\mathrm{tor}(E_3)$ \\
     \hline
    $\mathbb{Q}(\phi)$ & $\mathbb{Z}/(2) \times \mathbb{Z}/(2)$  & $\mathbb{Z}/(2) \times \mathbb{Z}/(2)$ & $\mathbb{Z}/(4) \times \mathbb{Z}/(2)$ \\
       \hline
    $\mathbb{Q}(\phi, i)$ & $\mathbb{Z}/(4) \times \mathbb{Z}/(2)$ & $\mathbb{Z}/(2) \times \mathbb{Z}/(2)$ & $\mathbb{Z}/(4) \times \mathbb{Z}/(2)$\\ \hline
    $\mathbb{Q}(\phi, i\sqrt{3-\phi})$ & $\mathbb{Z}/(2) \times \mathbb{Z}/(2)$ & $\mathbb{Z}/(2) \times \mathbb{Z}/(2)$ & $\mathbb{Z}/(4) \times \mathbb{Z}/(2)$ \\ \hline
    $\mathbb{Q}(\phi, i\sqrt{\phi+2})$ & $\mathbb{Z}/(2) \times \mathbb{Z}/(2)$ & $\mathbb{Z}/(2) \times \mathbb{Z}/(2)$ & $\mathbb{Z}/(4) \times \mathbb{Z}/(2)$  \\ \hline
    $\mathbb{Q}(\zeta, i)$ & $\mathbb{Z}/(4) \times \mathbb{Z}/(2)$ & $\mathbb{Z}/(2) \times \mathbb{Z}/(2)$  &  $\mathbb{Z}/(4) \times \mathbb{Z}/(2)$ \\
    \hline
    $\mathbb{Q}(\zeta, i, \sqrt{\phi})$ & $\mathbb{Z}/(8) \times \mathbb{Z}/(4)$ & $\mathbb{Z}/(8) \times \mathbb{Z}/(4)$  &  $\mathbb{Z}/(8) \times \mathbb{Z}/(4)$ \\
    \hline
    \end{tabular}
    \caption{Torsion groups over extensions of $\mathbb{Q}(\phi)$}\label{tab:torsion_over_extension}
    \end{table}

Proposition~\ref{prop:field_of_def} and Tables~\ref{tab:ec_rank} and \ref{tab:torsion_over_extension} show that the difference between the field of definition for $E_2=E_5$ and the other elliptic curves specified in Proposition~\ref{prop:field_choice} is in some sense inevitable: we genuinely need the square root of $\phi$ to describe points of order more than 2 on $E_2 = E_5$.

\subsection{Heights and Morse Divisors on ${\rm Jac}(X_{\rm alg}^5)$} \label{S:Heights_Divisors_5} We can apply Theorem~\ref{thm:imagesinEC} to characterize the relationship between height functions and Jacobian divisors in the $N = 5$ case. Our goal is to describe Jacobian divisors obtained as images of height functions and thereby prove our  \hyperref[thm:main]{Main Theorem}.

A step towards the classification of Jacobian divisors determined by height functions is given in the following proposition, which significantly restricts the structure of Jacobian divisors for $N = 5$.

\begin{proposition}\label{prop:raiselower}
Let $\nu_k(D_h)$ 
and $\nu_k(D_{h'})$ 
be the $E_k$ coordinates of the Jacobian divisors associated to heights $h$ and $h'$ on $H^5$, where $h'$ is obtained from $h$ by raising or lowering a single vertex $v$. Then $\nu_k(D_{h'}) - \nu_k(D_{h})=\pm(\ee{k}{\infty} - \ee{k}{4})$. 
\end{proposition}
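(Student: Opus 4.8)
The plan is to reduce the statement to a purely local computation using the explicit formula for discrete Morse divisors and the image description of Theorem~\ref{thm:imagesinEC}. First I would observe that lowering a single vertex $v$ (the only case we need, since raising is the reverse operation and changes $\nu_k(D_{h'})-\nu_k(D_h)$ only by a sign) affects the discrete Morse function, and hence the Morse divisor, only in a bounded neighborhood of $v$: the coefficients $\kappa_u$ can only change for $u=v$, for the neighbors $u_j$ of $v$ along the five colors, and for the face centers $f_{j,j+1}(v)$ in the five $(j,j+1)$-colored 4-cycles through $v$. Every other vertex and face center of $\widetilde H^5$ sees exactly the same local picture before and after the lowering, so its coefficient is unchanged. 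Thus $D_{h'}-D_h$ is supported on $\{v\}\cup\{v_j\}_{j=1}^5\cup\{f_{j,j+1}(v)\}_{j=1}^5$, a set of fifteen points, together with the new face centers that appear if a diamond becomes a bow-tie (which introduces a new barycenter vertex).

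Next I would enumerate the possible local configurations. By Definition~\ref{def:dMD}, when $v$ is lowered by $2$ it goes from being a local max (with $\kappa_v=-1$) to a local min (still $\kappa_v=-1$), so $v$ itself contributes nothing to $D_{h'}-D_h$ — but $v$ must have been a local max in $h$ for the lowering to be legal, which pins down that all five neighbors $v_j$ had height $h(v)-1$ in $h$. After lowering, the five $v_j$ have height $h(v)+1$ relative to the new value of $v$, so each $v_j$ may change type (e.g.\ from a regular or saddle point to a local max, or a saddle's multiplicity $\mu$ may shift), and each 4-cycle through $v$ may change from diamond to bow-tie or vice versa. The combinatorics here is exactly the situation already worked out in Example~\ref{ex:divisors}(c) and (d) and computed on the elliptic curves in Example~\ref{ex:2}: there, lowering $(1,1,1,1,1)$ gave $\nu_k(D_1)-\nu_k(D_{\rm fe}) = \ee{k}{\infty}-\ee{k}{4}$ for every $k$, and lowering $(0,1,1,1,1)$ next gave $\nu_k(D_2)-\nu_k(D_1) = \pm(\ee{k}{\infty}-\ee{k}{4})$. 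I would argue that, up to the combinatorial equivalences of Definition~\ref{def:rainbowequiv} (shifts, rainbow rotations, inversion) and up to relabeling which color-pair plays which role, \emph{every} single-vertex lowering presents one of these same local pictures; the shift ${\rm sh}_u$ lets us assume $v$ is any prescribed vertex, and the rainbow rotation permutes the roles of the color-pairs, which by Lemma~\ref{lem:color_split} and the structure of Table~\ref{tab:torsion_group} permutes the torsion generators $\ee{k}{4},\ee{k}{2},2\ee{k}{4}+\ee{k}{2}$ and the identity among themselves without changing the combination $\ee{k}{\infty}-\ee{k}{4}$ that appears in $D_{h'}-D_h$ (the class of a black-minus-white vertex plus a single order-4 generator).

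Concretely, the key steps in order are: (1) show $D_{h'}-D_h$ is supported on the local star of $v$; (2) using that $v$ is a local max of $h$, enumerate the finitely many local height patterns on this star and compute, via Definition~\ref{def:dMD}, the change in each coefficient $\kappa_{v_j}$ and $\kappa_{f_{j,j+1}(v)}$ and the contributions of any new bow-tie barycenters; (3) translate the resulting formal sum of points into $MW(E_k)$ using Tables~\ref{tab:vertex_images} and \ref{tab:face_images} and the sign rule of Lemma~\ref{lem:color_split}, observing that the five neighbors $v_j$ together with $v$ contribute $\sum_j \nu_k(v_j) - (\text{something like } B_k^\pm + W_k^\pm)$, which collapses (as in Example~\ref{ex:2}) to $\pm\ee{k}{\infty}$ plus torsion, while the changed face centers contribute $\mp\ee{k}{4}$ plus order-$\leq 2$ torsion that cancels; (4) invoke shift- and rainbow-rotation-equivariance to see the answer is independent of which vertex was lowered, so the residual torsion always cancels and one is left with exactly $\pm(\ee{k}{\infty}-\ee{k}{4})$. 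The main obstacle I anticipate is step (2)–(3): carefully checking that the order-$2$ torsion terms (the $\ee{k}{2}$ and $2\ee{k}{4}+\ee{k}{2}$ contributions from the diamond/bow-tie transitions on the non-distinguished color-pairs) genuinely cancel in pairs for every local configuration, rather than merely in the two configurations computed in Example~\ref{ex:2}; this requires either a uniform argument via the symmetry group action or a short finite case check over the (few) orbits of local patterns of a local max in $H^5$.
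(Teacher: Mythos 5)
Your proposal follows the same architecture as the paper's proof: localize $D_{h'}-D_h$ to the star of $v$ (the five neighbors $v_j$ and the five face centers $f_{j,j+1}(v)$, with $\kappa_v$ itself unchanged since $v$ passes from local max to local min), read off the coefficient changes from Definition~\ref{def:dMD}, and push the resulting formal sum through the image description of Theorem~\ref{thm:imagesinEC}. One of the difficulties you flag is actually uniform and needs no case analysis: each $\Delta_{f_j}$ is exactly $\pm 1$ (the $(j,j+1)$-face through $v$ always switches between diamond and bow-tie), and since the images of $f_{k-1,k}$, $f_{k,k+1}$, $f_{k+1,k+2}$ are $2$-torsion, their total contribution is $2\ee{k}{4}+\ee{k}{2}+\bigl(2\ee{k}{4}+\ee{k}{2}\bigr)=0$ regardless of the signs $\Delta_{f_j}$. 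So the order-$2$ torsion cancellation you single out as the main obstacle is automatic; the only torsion that survives is the order-$4$ contribution $\Delta_{f_{k-2}}\,S_{k,\mathbf 1}(v)\,\ee{k}{4}$.

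The genuine gap is in your step (2). The local datum governing all the coefficient changes is the vector $\bigl(h(v_{j,j+1})\bigr)_{j=1}^{5}\in\{h(v),\,h(v)-2\}^{5}$ of heights of the consecutive-color second neighbors: by Equation~\eqref{eq:Delta_i} these alone determine every $\Delta_{v_j}$ and $\Delta_{f_j}$. Up to cyclic rotation there are $8$ classes of such vectors (four after also allowing inversion), and the two worked examples you lean on realize only the constant class (Example~\ref{ex:divisors}(c)) and the ``two adjacent high, three low'' class (Example~\ref{ex:divisors}(d)); patterns such as a single high second neighbor, or the alternating pattern $(2,0,2,0,2)$, are reached by no combination of shifts, inversion, and rainbow rotation applied to those two examples, since those operations can only cyclically rotate or globally invert the local pattern. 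So the symmetry reduction you propose does not establish the result for all lowerings, and the finite check you defer is precisely the mathematical content of the proposition. The paper closes it with an explicit table over all $8$ cyclic classes, verifying in each that $\Delta_{v_{k-2}}+\Delta_{v_{k-1}}-\Delta_{v_{k}}+\Delta_{v_{k+1}}-\Delta_{v_{k+2}}$ and $\Delta_{f_{k-2}}$ take the values $1$ and $-1$ in some order, which is exactly what collapses the free and order-$4$ parts to $\pm\bigl(\ee{k}{\infty}-\ee{k}{4}\bigr)$. To complete your argument you would need to carry out that (short) enumeration rather than appeal to equivalence with the two examples.
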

\begin{proof}
Assume that $v$  is a local min in $h$ with $h(v) = 0$, and that $h'$ is the height obtained by raising $v$ ($h'(v) = 2$). Note that then $h(v_j) = h'(v_j) = 1$ for each $v_j$ adjacent to $v$ via color $j$. 

The only points whose contribution might differ in  $D_h$ and $D_{h'}$ are $v, v_1, \ldots, v_5, f_1 = f_{12}(v), \ldots, f_5 = f_{51}(v)$. Since $v$ is a min under $h$ and a max under $h'$, then $\kappa_v = \kappa'_v = 1$, where $\kappa_v$ and $\kappa'_v$ are the coefficients of $v$ in $D_h$ and $D_{h'}$, respectively (See Definition~\ref{def:dMD}). 

 Let $\Delta_{v_j} = \kappa'_{v_j} -\kappa_{v_j}$. Then,  
 \[
 \nu_k(D_{h'}-D_h) = \sum_{j=1}^5 (\Delta_{v_j} \nu_k(v_j)+ \Delta_{f_j} \nu_k(f_j)).
 \]

For $\mathbf 1 = (1,1,1,1,1)$, Lemmas \ref{lem:color_split} and \ref{lem:vertices} imply  that
$$
\nu_k(v_j) = 2e_4^k-S_{k,v}(v_j)\nu_k(v) = \begin{cases}
    2e_4^k-S_{k,v}(v_j)S_{ k, \mathbf{1}}(v)e_\infty^k & v \text{ white}\\
    S_{k,v}(v_j)S_{ k, \mathbf{1}}(v)e_\infty^k & v \text{ black}
\end{cases}
$$
 Also,  since $\Delta_{f_j} = \pm 1$ for each $j$, 
  \begin{align*}
 \textstyle\sum_{j=1}^5  \Delta_{f_j} \nu_k(f_{j} )&= \Delta_{f_{k-2}} \nu_k(f_{k-2})+\Delta_{f_{k-1}} \nu_k(f_{k-1})
     +\Delta_{f_k} \nu_k(f_{k})+\Delta_{f_{k+1}} \nu_k(f_{k+1})+\Delta_{f_{k+2}} \nu_k(f_{k+2}) \\
  &=\Delta_{f_{k-2}} S_{ k, \mathbf{1}}(v)  \ee{k}{4}+2\Delta_{f_{k-1}}\ee{k}{4}
 +\Delta_{f_k} \ee{k}{2}+\Delta_{f_{k+1}}(2\ee{k}{4}+\ee{k}{2})\\
 &=\Delta_{f_{k-2}} S_{ k, \mathbf{1}}(v) \ee{k}{4}.
 \end{align*}
 Thus, 
 \begin{equation*}
     \sigma \nu_k(D_{h'}-D_h) =\begin{cases}
         2\ee{k}{4} -S_{ k, \mathbf{1}}(v)e_\infty^k(\Delta_{v_{k-2}}+\Delta_{v_{k-1}} -\Delta_{v_{k}}+\Delta_{v_{k+1}} -\Delta_{v_{k+2} })+ S_{ k, \mathbf{1}}(v)\Delta_{f_{k-2}}\ee{k}{4}& v \text{ white}\\
          S_{ k, \mathbf{1}}(v)e_\infty^k(\Delta_{v_{k-2}}+\Delta_{v_{k-1}} -\Delta_{v_{k}}+\Delta_{v_{k+1}} -\Delta_{v_{k+2}} )+ S_{ k, \mathbf{1}}(v)\Delta_{f_{k-2}}\ee{k}{4}& v \text{ black}\\
     \end{cases}
 \end{equation*}
 
 We must show that  $\{\Delta_{v_{k-2}}+\Delta_{v_{k-1} }-\Delta_{v_{k}}+\Delta_{v_{k+1}} -\Delta_{v_{k+2}},\Delta_{f_{k-2}} \} = \{1,-1\}$. \\
 
 It is straightforward to verify that 
\begin{equation}\label{eq:Delta_i}
\Delta_{v_j} =\begin{cases}
    -1 & h(v_{j-1,j}) = h(v_{j,j+1})=2\\
    1 & h(v_{j-1,j}) = h(v_{j,j+1})=0\\
    0 & \text{otherwise  },
\end{cases} \quad\text{and} \quad
\Delta_{f_j} =\begin{cases}
    1 &  h(v_{j,j+1})=2\\
    -1 &  h(v_{j,j+1})=0\\
\end{cases}
\end{equation}
where $v_{j,j+1}$ is the vertex obtained from $v$ by traveling on an edge of color $j$ and an edge of color $j+1$. 
Consider, for example, the situation where \[
(h(v_{1,2}), h(v_{2,3}), h(v_{3,4}), h(v_{4,5}), h(v_{5,1})) = (2,2,2,2,0)
\]
Then for $k = 3$, $\Delta_{f_{k-2}} = \Delta_{f_1} = 1$, and $(\Delta_{v_{k-2}},\Delta_{k-1} ,\Delta_{k},\Delta_{k+1},\Delta_{k+2}) = (0,-1,-1,-1,0)$, so
\[
(\Delta_{k-2}+\Delta_{k-1} -\Delta_{k}+\Delta_{k+1} -\Delta_{k+2}, \Delta^f_{k-2}) = (-1,1).
\]
For $k = 2$, $\Delta_{f_{k-2}} = \Delta_{f_5} = -1$, and $(\Delta_{v_{k-2}},\Delta_{k-1} ,\Delta_{k},\Delta_{k+1},\Delta_{k+2}) = (0, 0,-1,-1,-1)$, so
\[
(\Delta_{k-2}+\Delta_{k-1} -\Delta_{k}+\Delta_{k+1} -\Delta_{k+2}, \Delta^f_{k-2}) = (1,-1).
\]

The following table gives all   $\Delta_{v_j}$ and  $\Delta_{f_j}$ possibilities.
\[
    \begin{array}{|c|c|c|}
    \hline
    (h(v_{j,j+1}))_j  &(\Delta_{v_j})_j & (\Delta_{f_j})_j \\\hline \hline
    (2,2,2,2,2) & (-1, -1, -1, -1, -1)&   (1,\ 1,\ 1,\ 1,\ 1)  \\ \hline
    (0,0,0,0,0) & (1,\ 1,\ 1,\ 1,\ 1)  &    (-1, -1, -1, -1, -1)\\ \hline
    (2,2,2,0,0) & (0,- 1,- 1,\ 0, \ 1)   &  (1,\ 1,\ 1,-1,-1)\\ \hline
    (0,0,0,2,2) & (0,\ 1,\ 1,\ 0, -1) &  (-1,- 1,-1,\ 1,\ 1)\\ \hline
    (2,2,2,2,0) & (0,- 1,- 1,- 1,\ 0)   &  (1,\ 1,\ 1,\ 1,-1)\\ \hline
    (0,0,0,0,2) & (0,\ 1,\ 1,\ 1,\ 0)   &  (-1,- 1,- 1,- 1,\ 1)\\ \hline
    (2,0,2,0,2) & (-1,\ 0,\ 0,\ 0,\ 0)   &  (1,-1,\ 1,-1,\ 1)\\ \hline
    (0,2,0,2,0) &  (1,\ 0,\ 0,\ 0,\ 0)  &  (-1,\ 1,- 1,\ 1,- 1)\\ \hline
    \end{array}
\]
It is straightforward to check that in all cases $\{\Delta_{v_{k-2}}+\Delta_{v_{k-1} }-\Delta_{v_{k}}+\Delta_{v_{k+1}} -\Delta_{v_{k+2}},\Delta_{f_{k-2}} \} = \{1,-1\}$.
 \end{proof}

Proposition~\ref{prop:raiselower} says that raising or lowering a vertex changes the $k$th coordinate of $\eta(D_h)$ by  $\pm(\ee{k}{\infty} - \ee{k}{4})$. Recall from  Example~\ref{ex:2},  that $\ee{k}{\infty} - \ee{k}{4}$ is exactly $\nu_k(D_1)$. Thus, integer multiples of $\nu_k(D_1)$ are the only possibilities for coordinates of the image of any divisor on ${\rm Jac}(X)$.  

\begin{corollary}
\[
\langle\, \{\nu_k(D_h) \}\mid h \text{ a height on }H^5\,\rangle \cong \langle \nu_k(D_1) \rangle\cong\mathbb{Z} \quad \text{for any }k \in [5].
\]
\end{corollary}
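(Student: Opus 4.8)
The corollary asserts that the subgroup of $E_k$ generated by all height-divisor images $\nu_k(D_h)$ is exactly the cyclic group $\langle \nu_k(D_1)\rangle$, which is infinite cyclic. I would structure the proof as a double containment together with an order argument, leaning entirely on Proposition~\ref{prop:raiselower}, Example~\ref{ex:2}, and Proposition~\ref{prop:Gamma_paths}.

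\textbf{Plan.} First I would establish the inclusion $\langle \{\nu_k(D_h)\}\rangle \subseteq \langle \nu_k(D_1)\rangle$. The key observation is that any two heights $h,h'$ on $H^5$ are connected by a finite chain of single-vertex raising/lowering moves: indeed, by Proposition~\ref{prop:Gamma_paths} every height can be reduced to a valise height in finitely many vertex-lowering steps, and vertex raising is the inverse operation, so the digraph $\Gamma_5$ (on all heights, not equivalence classes) is connected. Fixing the valise height $h_{\rm v}$ as a basepoint, for any height $h$ choose such a chain $h_{\rm v} = g_0, g_1, \dots, g_m = h$ where consecutive $g_i, g_{i+1}$ differ by a single vertex move. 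Proposition~\ref{prop:raiselower} gives $\nu_k(D_{g_{i+1}}) - \nu_k(D_{g_i}) = \pm(\ee{k}{\infty} - \ee{k}{4})$, and from Example~\ref{ex:2} we computed $\nu_k(D_{\rm v}) = O_k$, so telescoping yields $\nu_k(D_h) = c\,(\ee{k}{\infty} - \ee{k}{4})$ for some integer $c$ with $|c| \le m$. Since $\ee{k}{\infty} - \ee{k}{4} = \nu_k(D_1)$ (again by Example~\ref{ex:2}), every $\nu_k(D_h)$ lies in $\langle \nu_k(D_1)\rangle$, hence so does the subgroup they generate.

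\textbf{Reverse inclusion and the isomorphism type.} The reverse inclusion $\langle \nu_k(D_1)\rangle \subseteq \langle \{\nu_k(D_h)\}\rangle$ is immediate, since $h_1$ is itself a height on $H^5$, so $\nu_k(D_1) = \nu_k(D_{h_1})$ is one of the generators. Thus the two subgroups coincide. To see that this group is isomorphic to $\mathbb{Z}$, it suffices to show $\nu_k(D_1) = \ee{k}{\infty} - \ee{k}{4}$ has infinite order in $E_k$; then $\langle \nu_k(D_1)\rangle \cong \mathbb{Z}$ and $\langle \nu_k(D_1)\rangle \cong \langle \{\nu_k(D_h)\}\rangle$ finishes the corollary. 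By Theorem~\ref{thm:imagesinEC} (equivalently Proposition~\ref{prop:abelian_group}), the images of vertices and face centers generate a group $\cong \mathbb{Z} \times \mathbb{Z}/(4) \times \mathbb{Z}/(2)$ in which $\ee{k}{\infty}$ has infinite order, $\ee{k}{4}$ has order $4$, and $\ee{k}{2}$ has order $2$; in particular $\ee{k}{\infty}$ projects to a generator of the $\mathbb{Z}$-factor while $\ee{k}{4}$ is torsion, so their difference $\ee{k}{\infty} - \ee{k}{4}$ also projects to a generator of the $\mathbb{Z}$-factor and therefore has infinite order.

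\textbf{Anticipated obstacle.} Most of the work is quotation: the substantive content is already packaged in Proposition~\ref{prop:raiselower} and the Example~\ref{ex:2} computations. The one point requiring a small argument of its own is the connectivity of the (unreduced) height digraph used in the telescoping step — one must be slightly careful that Proposition~\ref{prop:Gamma_paths} is stated for paths \emph{to} a valise via lowering, so connecting two arbitrary heights $h, h'$ routes through a common valise (reduce $h$ down, then raise back up to $h'$, reversing a reduction of $h'$), and that the valise height classes all give $\nu_k(D_{\rm v}) = O_k$ (established for the specific valise in Example~\ref{ex:2}; for any other valise height, a rainbow-rotation/shift argument, or simply the same telescoping from an already-handled valise, gives the same value). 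This bookkeeping is routine but should be spelled out so the telescoping basepoint is unambiguous.
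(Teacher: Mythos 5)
Your proposal is correct and follows essentially the same route as the paper: the paper's (implicit) proof is exactly the telescoping observation that Proposition~\ref{prop:raiselower} forces every $\nu_k(D_h)$ to be an integer multiple of $\ee{k}{\infty}-\ee{k}{4}=\nu_k(D_1)$, which has infinite order since $\ee{k}{\infty}$ generates the free factor of $\mathbb{Z}\times\mathbb{Z}/(4)\times\mathbb{Z}/(2)$ and $\ee{k}{4}$ is torsion. Your extra care about connectivity of the unreduced height digraph and the choice of basepoint only makes explicit what the paper leaves implicit.
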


Though the group generated by height functions on $H^5$ is infinite, there are only a finite number of height functions up to overall shift. A natural question is which multiples $m\cdot \nu_k(D_1)$ of $\nu_k(D_1)$ occur as some $\nu_k(D_h)$. A preliminary estimate may be obtained using the  following corollary of the following Proposition.

\begin{proposition}\label{prop:Gamma_paths}
    A path from any height $h$ to a valise height $h_{\rm v}$ on $\Gamma_N$ can be attained in $m$ vertex lowering steps, where $m \leq (N-1)2^{N-2}$. 
\end{proposition}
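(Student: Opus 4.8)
The plan is to bound the number of vertex lowering steps needed to reach a valise height by analyzing how far an arbitrary height $h$ is from being a valise, and controlling the total ``descent budget'' that separates $h$ from $h_{\rm v}$. Recall that a valise height $h_{\rm v}$ on $H^N$ has total spread $1$: the two color classes occupy two consecutive levels. A single vertex lowering step replaces $h(v)$ by $h(v)-2$ for a vertex $v$ that is a local maximum, and adjacency in $\Gamma_N$ is exactly this operation. So I want to show that starting from any $h$ one can perform a sequence of such moves, each legal, terminating at some valise height, using at most $(N-1)2^{N-2}$ moves.

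First I would set up a monovariant. Fix the valise height $h_{\rm v}$ that is the ``target'' (say all white vertices at level $1$, all black vertices at level $0$; one can always choose which color class sits on top, and I would argue it suffices to reach \emph{some} valise). For a height $h$ define a potential $\Phi(h) = \tfrac12\sum_{v\in V(H^N)} \bigl(h(v) - h_{\rm v}(v) - c\bigr)$ where $c$ is a global shift chosen so that $h(v)-h_{\rm v}(v)-c \geq 0$ for all $v$ and $=0$ for at least one $v$ (the global minimum of $h$ relative to $h_{\rm v}$); this is well-defined because adjacent vertices of $h$ and of $h_{\rm v}$ both differ by $\pm 1$, so $h - h_{\rm v}$ is constant mod $2$ on each color class, and in fact takes values in $2\mathbb{Z}$ after the shift since $H^N$ is connected and bipartite. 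Each vertex-lowering move decreases $\Phi$ by exactly $1$. The move is always available as long as $h$ is not already the valise, because a height that is not a valise has a strict local maximum $v$ with $h(v) > h_{\rm v}(v) + c$ that can be lowered (a local max of $h$ can be lowered, and one exists above the target level whenever $\Phi(h) > 0$; one must check the lowered height is still a valid height, which holds because lowering a local max by $2$ keeps all edge differences equal to $1$). So the number of steps to reach the valise is exactly $\Phi(h)$, and it remains to bound $\max_h \Phi(h)$.

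The key combinatorial estimate is then $\Phi(h) \leq (N-1)2^{N-2}$ for every height $h$ on $H^N$. The cleanest route: $\Phi(h) = \tfrac12\sum_v (h(v) - m_h(v))$ where $m_h$ is the ``shadow valise'' agreeing with $h$ below. Using that $h$ has total spread at most $N$ (the diameter of $H^N$, since the fully extended height has spread $N$ and is extremal) would give a crude bound $\Phi(h) \leq \tfrac12 \cdot N \cdot 2^{N-1} = N 2^{N-2}$, which is off by one factor. To sharpen to $(N-1)2^{N-2}$ I would argue that the spreads cannot be ``saturated'' uniformly: partition the $2^N$ vertices by their value of $h(v) - h_{\rm v}(v)$, note this is an even integer between $0$ and $2k$ for $k \leq N$, and observe that the fully extended height — which is the unique (up to $\sim_{\rm c}$) source of $\Gamma_N$ — maximizes $\Phi$; a direct count for $h_{\rm fe}$ on $H^N$, where the vertex at Hamming distance $d$ from the bottom contributes $d$ (minus the valise contribution $0$ or $1$), gives $\sum_{d=0}^N \binom{N}{d} d \cdot \tfrac12 - (\text{correction}) $; evaluating $\sum_d \binom{N}{d} d = N 2^{N-1}$ and subtracting the half-weight of one color class $2^{N-1}/2$ yields exactly $(N-1)2^{N-2}$. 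I would then observe that $\Phi$ is monotone along $\Gamma_N$ (it strictly decreases), so $h_{\rm fe}$ being the unique maximal element of $\Gamma_N$ forces $\Phi(h) \leq \Phi(h_{\rm fe}) = (N-1)2^{N-2}$ for all $h$, completing the proof.

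The main obstacle I anticipate is the sharp constant: getting $(N-1)2^{N-2}$ rather than $N 2^{N-2}$ requires carefully identifying $h_{\rm fe}$ as the genuine $\Phi$-maximizer and computing $\Phi(h_{\rm fe})$ exactly, including the off-by-$\tfrac12$ bookkeeping coming from the two color classes of the target valise. A secondary subtlety is verifying that a legal lowering move is \emph{always} available when $\Phi(h)>0$ and that the result is still a bona fide height (nonnegativity of $h$ can be restored by a global shift since heights are only defined up to the geometry, or one checks the relevant $h_{\rm v}$ can be placed high enough); this is routine but must be stated. Everything else — that $\Phi$ decreases by exactly $1$ per move, that $h - h_{\rm v}$ is constant mod $2$ on color classes — is immediate from bipartiteness and the definition of a height function.
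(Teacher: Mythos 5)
Your overall strategy is essentially the paper's: reduce to the fully extended height as the worst case, then count. Your evaluation $\Phi(h_{\rm fe})=\tfrac12\bigl(N2^{N-1}-2^{N-1}\bigr)=(N-1)2^{N-2}$ agrees with the paper's count $\sum_{k}\lfloor k/2\rfloor\binom{N}{k}$, obtained by lowering each vertex of $h_{\rm fe}$ exactly $\lfloor h_{\rm fe}(v)/2\rfloor$ times (the paper's displayed final value $(N-2)2^{N-1}$ is a slip for $(N-1)2^{N-2}$, which is what the stated bound and the subsequent corollary require). The genuine gap is the step you yourself flag as the main obstacle: the claim that $\Phi(h)\le\Phi(h_{\rm fe})$ for every $h$. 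You justify it by asserting that $h_{\rm fe}$ is the unique source of $\Gamma_N$ and that $\Phi$ strictly decreases along every edge, so that every height lies downstream of $h_{\rm fe}$. Both halves fail. Raising a local minimum by $2$ always produces a valid height, and every height has a local minimum, so every vertex of $\Gamma_N$ has an in-edge; in particular $h_{\rm fe}$ is not a source. This is visible in the paper's own picture of $\widetilde\Gamma_4$ (Figure~\ref{fig:Gamma_4}), which contains the edge $[h_{1'}]\to[h_{\rm fe}]$ and indeed a directed cycle through $[h_{\rm fe}]$. The existence of directed cycles also shows that no potential can strictly decrease along every edge as you claim: the culprit in your setup is the normalization constant $c$ (equivalently, the implicit global re-shift keeping heights in $\mathbb Z_{\ge 0}$), which jumps when the lowered vertex becomes the new global minimum. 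So the inference ``unique source of a monotone digraph $\Rightarrow$ $\Phi$-maximizer'' is unavailable, and the extremality of $h_{\rm fe}$ is left unproved.

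The repair is to bound $\Phi(h)$ directly rather than through the structure of $\Gamma_N$. Normalize so that $\min_v h(v)=0$ and pick $u$ with $h(u)=0$; since $h$ changes by exactly $1$ along each edge, $h(v)\le d(v,u)$ in Hamming distance. Lowering each vertex $\lfloor h(v)/2\rfloor$ times sends $h$ to the valise $v\mapsto h(v)\bmod 2$, so the number of steps needed is at most $\sum_v\lfloor h(v)/2\rfloor\le\sum_v\lfloor d(v,u)/2\rfloor=\sum_{k=0}^{N}\lfloor k/2\rfloor\binom{N}{k}=(N-1)2^{N-2}$, with equality precisely for $h_{\rm fe}$. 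This is the paper's computation with its unproved assertion (``any other type of height requires strictly less steps'') made explicit. Your secondary concerns --- availability of a legal move when $\Phi>0$, and the parity constraint dictating which of the two valises is reachable --- are real but routine, as you say.
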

\begin{proof}
Recall that $\Gamma_N$ is the digraph whose vertices are heights on $H^N$, where adjacency is determined by vertex lowering (see Remark~\ref{rem:Gamma}). We show that the proposition is true for the fully extended height $h_{\rm fe}$. Any other type of height requires strictly less steps to reach the valise. Note that $h_{\rm fe}$ can be defined by assigning to any vertex $v = (x_1, \ldots, x_N) \in H^N$ the height $h_{\rm fe}(v) = \sum_i x_i$. If each such vertex is lowered $\lfloor h(v)/2\rfloor$ times, all vertex heights will be 0 or 1, according to whether the vertex is black or white. The number of steps in this process is
\[
\sum_{k=0}^{N} \lfloor k/2\rfloor {N \choose k}.
\]
Using the identities 
\[
2 \left\lfloor \frac k 2 \right\rfloor = \begin{cases}
    k & k \text{ even}\\
    k-1 & k \text{ odd,}
\end{cases}
\qquad 
\sum_i i {n \choose i } = n2^{n-1}, \qquad 
\sum_{i\text{ odd}} {n \choose i } =2^{n-1}, 
\]
It is straightforward to verify that 
\[
\sum_{k=0}^{N} \lfloor k/2\rfloor {N \choose k}  = (N-1)2^{N-2}.
\]
\end{proof}

\begin{corollary}\label{cor:restricted_divisors}
For $h$ a height function on $H^5$,  $\nu_k(D_h) = a(\ee{k}{\infty} - \ee{k}{4})$ for some integer $a$ satisfying $0\leq |a| \leq 16$.
\end{corollary}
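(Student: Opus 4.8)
The plan is to bound $|a|$ by placing $h$ on a directed path of $\Gamma_5$ running from a fully extended height to a valise height, using that both endpoints of such a path are sent to $O_k$. First, $\nu_k(D_h)$ is a multiple of $\ee{k}{\infty}-\ee{k}{4}$ by the preceding corollary (which itself follows from Proposition~\ref{prop:raiselower} together with the facts that a valise height is reached from $h$ by vertex lowerings and that $\nu_k$ sends it to $O_k$); write $\nu_k(D_h)=a(\ee{k}{\infty}-\ee{k}{4})$, so it remains to show $|a|\le 16$.

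I would next show that $h$ lies on a directed path in $\Gamma_5$ from a fully extended height $h_{\rm fe}$ down to a valise height $h_{\rm v}$. The downward half is exactly Proposition~\ref{prop:Gamma_paths}. The upward half amounts to checking that $h$ is obtained from some fully extended height by a sequence of vertex lowerings, equivalently that there is a vertex $u_0$ of $H^5$ with $h(v)\le 5-d(v,u_0)$ and $h(v)\equiv 5-d(v,u_0)\pmod 2$ for every vertex $v$, where $d$ denotes Hamming distance; from the fully extended height pinned at $u_0$ one then reaches $h$ by repeatedly lowering a current local maximum that has not yet descended to its $h$-value. If $h$ has spread $5$ it is already a fully extended height, and if its spread is at most $4$ one exhibits such a $u_0$ directly. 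I expect this last verification to be the main technical obstacle.

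Granting the sandwich, let $p$ and $q$ be the numbers of lowering steps from $h_{\rm fe}$ to $h$ and from $h$ to $h_{\rm v}$. Each vertex-lowering step lowers $\sum_v h(v)$ by exactly $2$, while $\sum_v h_{\rm fe}(v)=5\cdot 2^4=80$ and $\sum_v h_{\rm v}(v)=2^4=16$, so $p+q=(80-16)/2=32$. By Proposition~\ref{prop:raiselower}, each step changes the $E_k$-coordinate of the Jacobian divisor by $\pm(\ee{k}{\infty}-\ee{k}{4})$, and by Example~\ref{ex:2}, $\nu_k(D_{h_{\rm fe}})=\nu_k(D_{h_{\rm v}})=O_k$. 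Tracking the coefficient of $\ee{k}{\infty}-\ee{k}{4}$ along the $h_{\rm fe}$-to-$h$ portion expresses $a$ as a sum of $p$ terms $\pm 1$, so $|a|\le p$, and the same reasoning on the $h$-to-$h_{\rm v}$ portion gives $|a|\le q$; hence $2|a|\le p+q=32$ and $|a|\le 16$. Apart from the upward half of the sandwich, this is bookkeeping with Propositions~\ref{prop:raiselower} and~\ref{prop:Gamma_paths} and the computations of Example~\ref{ex:2}.
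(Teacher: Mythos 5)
Your argument is essentially the paper's own proof of this corollary, carried out in more detail: the paper's proof consists of the single assertion that $h$ lies on a minimal path in $\Gamma_5$ between the fully extended height and the valise, followed by an appeal to Proposition~\ref{prop:Gamma_paths}, and the bookkeeping you supply --- $p+q=(80-16)/2=32$ total lowering steps, each changing the coefficient of $\ee{k}{\infty}-\ee{k}{4}$ by $\pm 1$ via Proposition~\ref{prop:raiselower}, with both endpoints mapping to $O_k$ by Example~\ref{ex:2}, whence $|a|\le\min(p,q)\le 16$ --- is exactly what that assertion is meant to encode. The step you flag as the main obstacle, namely that $h$ is reachable from some fully extended height by lowerings, is precisely the step the paper states without proof, so you have correctly isolated the only nontrivial point; be aware that it is genuinely delicate. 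Taking $u_0$ to be a maximizer of $h$ gives $h(v)\ge 5-d(v,u_0)$ by the triangle inequality along a geodesic, the reverse of the inequality you need, so $u_0$ and the vertical normalization of $h$ must be chosen jointly; moreover the naive statement fails for $N=4$ (the class $[h_{1'}]$ of Figure~\ref{fig:Gamma_4} is not pointwise dominated, with matching parities, by any fully extended height, which is why $\widetilde\Gamma_4$ has edges \emph{into} $[h_{\rm fe}]$), so any argument must exploit something particular to the present setting, e.g.\ that $N=5$ is odd or that heights are identified up to vertical shift. Subject to that gap --- which you share with, and honestly flag unlike, the paper --- your proof is correct and follows the same route.
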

\begin{proof}
Let $h$ be a height function on $H^5$. Then, on $\Gamma_5$, $h$ lies on a minimal path between the height corresponding to the fully extended hypercube and the valise hypercube. Proposition \ref{prop:Gamma_paths} gives the result. 
\end{proof}

\begin{table}[h]
    \centering
    \begin{tabular}{|c|c|}
    \hline
        $\nu_3(D_h)$  & Frequency \\\hline\hline
       $-8(\ee{\infty}{3} - \ee{4}{3})$  & 24 \\\hline
       $-7(\ee{\infty}{3} - \ee{4}{3})$  & 128 \\\hline
       $-6(\ee{\infty}{3} - \ee{4}{3})$  & 704 \\\hline
       $-5(\ee{\infty}{3} - \ee{4}{3})$  & 2752 \\\hline
       $-4(\ee{\infty}{3} - \ee{4}{3})$  & 9048 \\\hline
       $-3(\ee{\infty}{3} - \ee{4}{3})$  & 23392 \\\hline
       $-2(\ee{\infty}{3} - \ee{4}{3})$ & 47200 \\\hline
      $-1(\ee{\infty}{3} - \ee{4}{3})$  & 72384  \\\hline
       $O_1$  & 83830  \\\hline
       $1(\ee{\infty}{3} - \ee{4}{3})$  & 72384 \\\hline
       $2(\ee{\infty}{3} - \ee{4}{3})$  & 47200 \\\hline
      $3(\ee{\infty}{3} - \ee{4}{3})$ & 23392 \\\hline
       $4(\ee{\infty}{3} - \ee{4}{3})$  & 9048 \\\hline
       $5(\ee{\infty}{3} - \ee{4}{3})$  & 2752 \\\hline
       $6(\ee{\infty}{3} - \ee{4}{3})$  & 704 \\\hline
       $7(\ee{\infty}{3} - \ee{4}{3})$  & 128 \\\hline
       $8(\ee{\infty}{3} - \ee{4}{3})$  & 24 \\\hline
    \end{tabular} 
    \vspace*{2mm}
    
    \caption{A frequency table describing images in $E_3$ of the discrete Morse divisors associated with each of the $395\, 094$ heights on $H^5$. }\label{tab:heightsdivisors_sample}
\end{table}

\begin{figure}[h]
    \includegraphics[width=.8\textwidth]{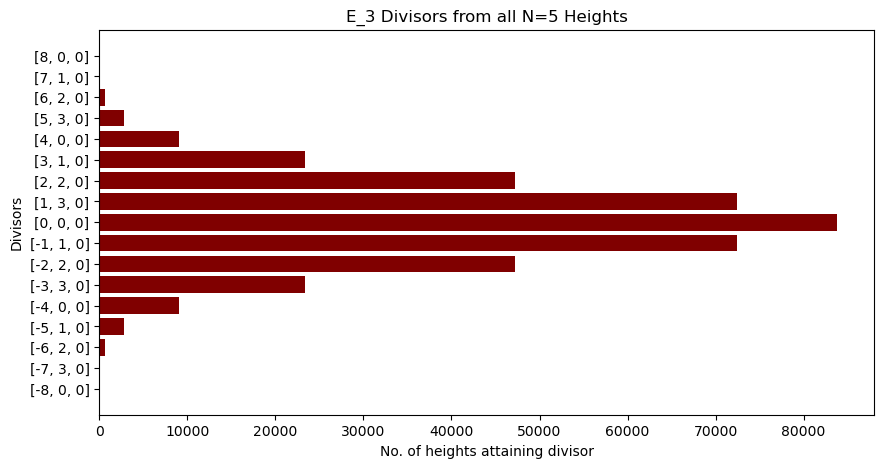}
    \caption{A frequency chart describing images in $E_3$ of the discrete Morse divisors associated with each of the $395\, 094$ heights on $H^5$. }\label{fig:heightsdivisors_sample}
\end{figure}

In fact,  we can say even more. Using SageMath, we implemented a version of Zhang's algorithm for enumerating the $N=5$ hypercube Adinkra heights (see Section~\ref{SS:CountingHeights} and \cite{ourcode}), and then performed the combinatorial algorithm laid out in Definition~\ref{def:dMD}, Equation~\ref{conv:fc},  and Lemma~\ref{lem:color_split} to find the images of each of the $395\,094$ heights as divisors in $E_3$.    

\begin{proposition}
    The images $\nu_3(D_h)$ of all $N=5$ hypercube Adinkra heights $h$ on $E_3$ form the set $\{a(\ee{\infty}{3} - \ee{4}{3}) \mid a \in \mathbb{Z}, -8 \leq a \leq 8\}$.
\end{proposition}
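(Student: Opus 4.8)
The plan is to prove the two inclusions separately; the substance lies in an explicit enumeration, but one inclusion reduces to a short symmetry observation. By the Corollary following Proposition~\ref{prop:raiselower}, for every height $h$ on $H^5$ we may write $\nu_1(D_h)=a(h)\,(\ee{\infty}{1}-\ee{4}{1})$ for a unique integer $a(h)$, so the claim is exactly $\{a(h)\mid h\text{ a height on }H^5\}=\{-8,-7,\dots,8\}$. First I would isolate a usable formula for $a(h)$: since the images of all face centers under the maps of Equation~\ref{conv:fc} are torsion, and since $\ee{\infty}{1}-\ee{4}{1}$ has $\mathbb{Z}$-coordinate $1$ in $\mathbb{Z}\times\mathbb{Z}/(4)\times\mathbb{Z}/(2)$, the integer $a(h)$ equals the $\mathbb{Z}$-coordinate of $\nu_1(D_h)$, namely $a(h)=\sum_v\kappa_v\,(-1)^{v_1+v_3}$, where the sum runs over the $32$ black and white vertices of $H^5$, $\kappa_v$ is the discrete Morse coefficient of $v$ in $D_h$ (Definition~\ref{def:dMD}), and $(-1)^{v_1+v_3}$ is the $1$-color-splitting sign $S_{1,v}$ at $v$ (Proposition~\ref{prop:hypercube_color_splittings} with $s_1(x)=x_1+x_3$, noting $s_1(1,1,1,1,1)=s_1(0,0,0,0,0)=0$, so both the white and black base points receive label $+$). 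I would also record here the elementary bound $\kappa_v\in\{-1,0,1\}$: the five values $h(u_j)-h(v)$, $j=1,\dots,5$, form a cyclic $\pm1$-sequence of odd length, so $\lambda_v$ is even and at most $4$, whence $\kappa_v\le 1$.

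For the inclusion $\subseteq$, I would carry out the finite recipe that computes $a(h)$ for every $h$: enumerate all $395\,094$ heights of $H^5$ by Zhang's algorithm (\S\ref{SS:CountingHeights}); for each $h$ read off the Morse coefficients $\kappa_v$ from Definition~\ref{def:dMD}; then form $\sum_v\kappa_v(-1)^{v_1+v_3}$. This produces the full frequency table recorded in Table~\ref{tab:heightsdivisors_sample}, from which $\max_h|a(h)|=8$. I expect this to be the main obstacle: the only a priori estimate available is $|a(h)|\le 16$ from Corollary~\ref{cor:restricted_divisors} (which in turn comes from the path-length bound of Proposition~\ref{prop:Gamma_paths}), and I do not see a structural invariant forcing the sharper value $8$ rather than $16$, so the sharp bound genuinely rests on the enumeration. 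The delicate point in implementing it is faithfully tracking the $\pm$ signs dictated by the total-color-splitting in Lemma~\ref{lem:color_split}; as a safeguard the implementation should reproduce the hand computations of $\nu_k(D_{\rm v})$, $\nu_k(D_{\rm fe})$, $\nu_k(D_1)$, and $\nu_k(D_2)$ from Examples~\ref{ex:divisors} and~\ref{ex:2}.

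For the inclusion $\supseteq$, a symmetry reduction does half the work. Let $u=(0,0,1,0,0)$; the shift ${\rm sh}_u\colon v\mapsto u+v$ is a rainbow-preserving graph automorphism of $H^5$, so the Morse coefficients transfer as $\kappa^{{\rm sh}_u(h)}_v=\kappa^h_{u+v}$, while reindexing together with $s_1(u)=1$ gives $(-1)^{v_1+v_3}=-(-1)^{(u+v)_1+(u+v)_3}$. Hence $a({\rm sh}_u(h))=-a(h)$, so the image set is symmetric about $0$ and it suffices to realize $a=0,1,\dots,8$. The values $0,1,2$ are realized by $h_{\rm fe},h_1,h_2$ of Example~\ref{ex:2}, using $\nu_1(D_{\rm fe})=O_1$, $\nu_1(D_1)=\ee{\infty}{1}-\ee{4}{1}$, and $\nu_1(D_2)=2\ee{\infty}{1}+2\ee{4}{1}=2(\ee{\infty}{1}-\ee{4}{1})$ (the last equality because $\ee{4}{1}$ has order $4$). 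By Proposition~\ref{prop:raiselower} each single lowering changes $a$ by exactly $\pm1$, and I would verify, either by continuing the chain of lowerings begun in Example~\ref{ex:2} or as a byproduct of the enumeration above, that a descending path in $\Gamma_5$ starting from $h_{\rm fe}$ attains each of $a=3,4,\dots,8$ before returning to $a=0$ at the valise. Combining the two inclusions completes the proof; the harder half is the sharp upper bound, which is exactly where the computation is indispensable.
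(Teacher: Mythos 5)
Your overall strategy is the same as the paper's: the paper establishes this proposition purely by exhaustive computation (Zhang's enumeration of all $395\,094$ heights followed by the combinatorial divisor algorithm of Definition~\ref{def:dMD}, Equation~\ref{conv:fc}, and Lemma~\ref{lem:color_split}), and you correctly identify that the sharp bound $|a|\leq 8$ rests on that enumeration rather than on a structural invariant. Your additions (the $a\mapsto -a$ symmetry and the realization of $a=0,1,2$ from Example~\ref{ex:2}) merely streamline the $\supseteq$ half.

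Two sign slips would, however, derail the computation if implemented exactly as written. First, your formula $a(h)=\sum_v\kappa_v(-1)^{v_1+v_3}$ assigns the color-splitting sign as the $\mathbb{Z}$-coordinate for \emph{both} vertex colors; but by the relation $B_k^+ + W_k^+ = F_{(k-1,k)}$ preceding Lemma~\ref{lem:vertices} (equivalently, $B_k^+=-\ee{k}{\infty}+2\ee{k}{4}$ in Example~\ref{ex:2}), a black vertex labeled $+$ has $\mathbb{Z}$-coordinate $-1$, so black vertices must enter with the opposite sign: $a(h)=\sum_{v\text{ white}}\kappa_v(-1)^{v_1+v_3}-\sum_{v\text{ black}}\kappa_v(-1)^{v_1+v_3}$. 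The two expressions are genuinely different functions of $h$, not global negatives of one another; for instance your version returns $-1$ on $D_1$ whereas $\nu_1(D_1)=\ee{\infty}{1}-\ee{4}{1}$ gives $a=+1$. Second, and as a consequence of the same convention, the shift $u=(0,0,1,0,0)$ does not negate $a$: since $u$ has odd parity and $S_{1,b}(u)=-1$ (because $s_1(u)=u_1+u_3=1$), Proposition~\ref{prop:heightoperations} shows this shift \emph{fixes} $\nu_1(D_h)$. A shift realizing $a\mapsto-a$ is instead one with $-S_{1,b}(u)=-1$ and odd parity, e.g.\ $u=(0,1,0,0,0)$, or one with even parity and $S_{1,b}(u)=-1$. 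Both slips would be caught by the safeguard you yourself propose (reproducing $\nu_1(D_{\rm fe})$, $\nu_1(D_1)$, $\nu_1(D_2)$), and neither affects the viability of the approach once corrected.
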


\noindent We list the frequency with which each possible divisor on $E_3$ appears in Table~\ref{tab:heightsdivisors_sample} and illustrate the frequencies with a chart in Figure~\ref{fig:heightsdivisors_sample}. 

One might naturally wonder whether the bound we have observed on the size of divisors obtained as height images and their frequency distribution in $E_3$ holds for every $E_k$. Indeed, it does, as advertised in our \hyperref[thm:main]{Main Theorem}. To show the pattern holds, we will need the following definition and proposition describing the images of heights on $H^5$ under what we call rainbow rotation:
\begin{definition}\label{def:rainbowequiv}
    Let $u$ be a vertex in they hypercube adinkra $H^N$.  For  $v \in H^N$,  list the colors of the edges traversed in a path between $u$ and $v$: $j_1, \ldots, j_s$ (this is equivalent to the set of coordinates in which $u$ and $v$ differ). 
     Then we will call  the vertex ${\rm rot}_u(v)$ obtained by traveling from $u$ along the colors $(j_1+1) \bmod N, \ldots, (j_s+1) \bmod N$, the \emph{rainbow rotation of $v$ from $u$}. For $h \in \Gamma^N$,  the \emph{rainbow rotation from $u$} ${\rm rot}_u\colon \Gamma^N \to \Gamma^N$ is given by ${\rm rot}_u(h)(v) = h({\rm rot}_u(v))$. 

 \end{definition}

\begin{proposition}\label{prop:heightoperations}
    
    Let $b$ be the unique (base) vertex such that $\nu_k(b) = W_k^+$ for each $k = 1, \ldots, 5$, and fix $u \in H^5$. Then, 
    \[
        \nu_k(D_{{\rm rot}_b(h)}) = \nu_{\ell}(D_{h}), \text{ where }k \equiv l+1 \mod 5.
    \] 
    
\end{proposition}
\begin{proof}
    Note that if $v$ is a critical point of $h$, then ${\rm rot}_b^{-1}(v)$ is a critical point of $h' = {\rm rot}_b(h)$. The color of vertex $v$ is determined by the parity of the distance of a path from $b$ to $v$. Note that this parity is the same for the distance from $b$ to ${\rm rot}_b^{-1}(v)$, since the path between $b$ and $v$ can be rotated backwards according to the rainbow to get to ${\rm rot}_b^{-1}(v)$. Thus, we have that 
    \[
    \nu_k({\rm rot}_b^{-1}(v))=
        S_{k,b}({\rm rot}_b^{-1}(v)) \nu_{k}(v). 
    \]
    Recall that $S_{k,b}({\rm rot}_b^{-1}(v))$ is determined by the number of instances of $k$ and $k+2$ in a minimal path between $b$ and ${\rm rot}_b^{-1}(v)$. This is equivalent to the number of instances of $(k+1) \bmod  5$ and $(k+3) \bmod  5$ in a path between $b$ and $v$. Thus, if $\ell \equiv (k + 1) \bmod 5$, we have $\nu_k({\rm rot}_b^{-1}(v))=S_{\ell,b}((v)) \nu_{k}(v)$, and 
    \[
    \nu_k(D_{{\rm rot}_b(h)}) = \nu_{\ell}(D_{h}).
    \]

\end{proof}

Clearly, ${\rm rot}_u: \Gamma^N \to \Gamma^N$ is a bijection. Thus, Proposition~\ref{prop:heightoperations} shows that the distribution of divisors $\nu_k(D_h)$ on $E_k$ will be the same for each $k = 1, \ldots, 5$. 

\begin{theorem}\label{thm:smallerthan8}
  Let $a\cdot \ee{k}{\infty} + b\cdot \ee{k}{4}+c\cdot \ee{k}{2} = \nu_k(D_h)$ for  a height $h$ on $H^5$. Then $0\leq |a| \leq 8$,   $b \equiv -a \mod 4$, and $c = 0$. Moreover, the divisors $\nu_k(D_h)$ occur with the same frequencies displayed in Table~\ref{tab:heightsdivisors_sample}.
  
\end{theorem}

Together, Proposition~\ref{prop:raiselower} and Theorem~\ref{thm:smallerthan8} yield our \hyperref[thm:main]{Main Theorem}.

Note that there are 48 heights $h$ for which $\nu_3(D_h) = \pm 8(\ee{\infty}{3}-\ee{4}{3})$. These come from two  families of heights as shown in Figure \ref{fig:max_heights}. 
The ranked adinkras represented on the left pane of Figure~\ref{fig:max_heights} are those with two pinned vertices $v_1$ and $v_2$, such that, beginning at one pinned vertex $v_1$, traveling along colors $j$ and $(j+2)\bmod 5$ gives pinned vertex $v_2$. For example, one such set of pinned vertices might be $\{ (1,1,1,1,1),  (0,1,1,0,1)\}$. The ranked adinkras represented in the right pane of Figure~\ref{fig:max_heights} are those with 4 pinned vertices $v_1, \ldots, v_4$ with adjacency as follows: beginning at one pinned vertex $v_1$, travel colors $j$ and $j+2$ to arrive at pinned vertex $v_2$, continue along colors $(j-1) \bmod 5$ and $(j+1) \bmod 5$ to pinned vertex $v_3$, and finally along colors $j$ and $(j+2) \bmod 5$ to arrive at pinned vertex $v_4$. For example, one such set of pinned vertices might be $\{(1,1,1,1,1),  (0,1,1,0,1),  (0,1,0,0,0), (1,1,0,1,0)\}$.  If we set the rightmost vertex in the top row of each figure to be the vertex $(1,1,1,1,1)$ and set $j = 4$, then the divisors on ${\rm Jac}(X)$ can be calculated as described in Sections \ref{S:discreteMorse} and \ref{S:Geometrization}. For the height shown in the left pane, the divisor on $E_1 \times \cdots \times E_5$ is 
\begin{equation}\label{eq:extrem1}
\big(O_1,  O_2, -8(\ee{\infty}{3}-\ee{4}{3}),O_4,-8(\ee{\infty}{5}-\ee{4}{5}) 
\big).
\end{equation}
For the height shown in the right pane, the divisor on $E_1 \times \cdots \times E_5$ is 
\begin{equation}\label{eq:extrem2}
\big(O_1, O_2, -8(\ee{\infty}{3}-\ee{4}{3}), -8(\ee{\infty}{4}-\ee{4}{4}),O_5 
\big).
\end{equation}

\begin{figure}
    \includegraphics[width = .48\textwidth]{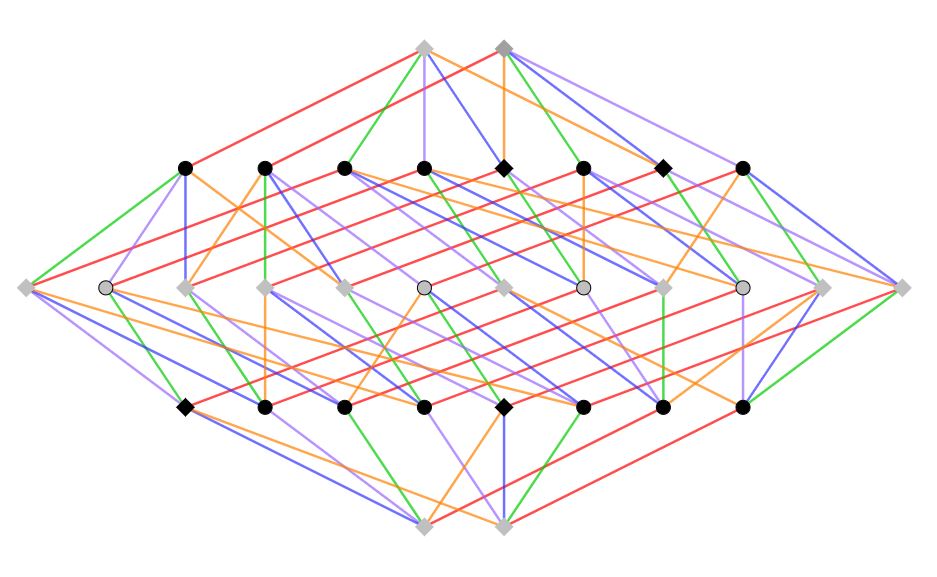}\quad
    \includegraphics[width = .48\textwidth]{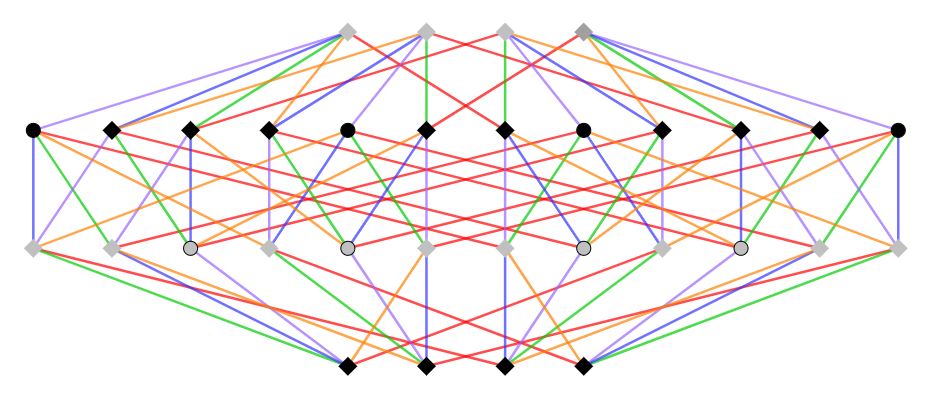}
    \caption{Representatives from the two families of heights for which $\nu_3(D_h) =\pm 8(\ee{\infty}{3}-\ee{4}{3})$. Here, the rainbow ordering for the edges is red-orange-green-blue-purple. The critical point vertices are diamond shaped and the regular vertices are circles.}\label{fig:max_heights}
\end{figure}

Choosing different vertices for the ``pinned'' positions, related in the same way by rainbow ordering (i.e., different representatives from the geometric equivalence classes) will yield similar results, up to signs  and cyclic rotation of the zero and nonzero coordinates. These are the only divisor images for which one of their coordinates has $|a|  = 8$ in the notation of Theorem~\ref{thm:smallerthan8}. 

While the result in Theorem~\ref{thm:smallerthan8} significantly restricts possible divisor images $\eta(D_h)$ on ${\rm Jac}(X)$, there are still 17 possibilities in each coordinate, and thus $17^5 =1\, 419\, 857 $ possible divisor images. This is far more than the total number of heights on $H^5$. However, as demonstrated for the ``extremal'' cases shown in Figure~\ref{fig:max_heights} and calculated in Equations \eqref{eq:extrem1} and \eqref{eq:extrem2},  few among these possibilities actually occur. A natural open question is the following:

\begin{problem}
    Describe the images $\nu(D_h)$ of the divisors $D_h$ for all heights $h$ on $H^5$, and how this description relates to adjacency in the digraph $\Gamma_5$. 
\end{problem}

\subsection*{Acknowledgements}
We thank Steven Charlton for pointing us toward the connection between height functions and 3-colorings of graphs. We also thank the Isaac Newton Institute for Mathematical
Sciences, Cambridge, for support and hospitality during the K-theory, algebraic cycles and
motivic homotopy theory programme, where work on this paper was undertaken. This
work was supported by EPSRC grant no EP/R014604/1.

\end{document}